\newcommand{\BN}{\mathbb{N}}
\newcommand{\BZ}{\mathbb{Z}}
\newcommand{\GG}{\mathbb{G}}
\newcommand{\BA}{\ensuremath{\mathbb{A}}}
\newcommand{\ov}{\overline}
\newtheorem{theorem}{Theorem}[section]
\newtheorem{lemma}[theorem]{Lemma}
\newtheorem*{theorem3}{Theorem \ref{thm:minimal}}
\newtheorem*{theorem4}{Theorem \ref{thm:crit}}
\newtheorem{prop}[theorem]{Proposition}
\newtheorem{corollary}[theorem]{Corollary}
\newtheorem*{cor*}{Corollary}
\newtheorem*{claim}{Claim}
\DeclareMathOperator{\az}{alph}
\newtheorem{defn}[theorem]{Definition}
\newtheorem{expl}[theorem]{Example}
\title[Commensurability of RAAGs]{On commensurability of right-angled Artin groups I: RAAGs defined by trees of diameter $4$}
\author[M. Casals-Ruiz]{Montserrat Casals-Ruiz}
\address{Ikerbasque - Basque Foundation for Science and Matematika Saila,  UPV/EHU,  Sarriena s/n, 48940, Leioa - Bizkaia, Spain}
\email{montsecasals@gmail.com}
\author[I. Kazachkov]{Ilya Kazachkov}
\address{Ikerbasque - Basque Foundation for Science and Matematika Saila,  UPV/EHU,  Sarriena s/n, 48940, Leioa - Bizkaia, Spain}
\email{ilya.kazachkov@gmail.com}
\author[A. Zakharov]{Alexander Zakharov}
\address{Matematika Saila, UPV/EHU, Sarriena s/n, 48940, Leioa - Bizkaia, Spain, and Russian foreign trade academy, 4a Pudovkin Street, 119285, Moscow, Russia}
\email{zakhar.sasha@gmail.com}
\keywords{right-angled Artin groups, commensurability, quasi-isometries}
\begin{document}

\thanks{This work was supported by the ERC Grant 336983, by the Basque Government grant IT974-16 and by the grant MTM2014-53810-C2-2-P of the Ministerio de Economia y Competitividad of Spain. The first author was supported by Juan de la Cierva programme of the Spanish Government. The third author was supported by the Russian Foundation for Basic Research (project no.  15-01-05823), and by CMUP (UID/MAT/00144/2013), which is funded by FCT (Portugal) with national (MEC) and European structural funds (FEDER), under the partnership agreement PT2020.}

\begin{abstract}
In this paper we study the classification of right-angled Artin groups up to commensurability. We characterise the commensurability classes of RAAGs defined by trees of diameter  4. In particular, we prove a conjecture of Behrstock and Neumann that there are infinitely many commensurability classes. Hence, we give first examples of RAAGs that are quasi-isometric but not commensurable.
\end{abstract}
\maketitle

\section{Introduction}
	\subsection{Context}
	One of the basic problems on locally compact topological groups is to classify their lattices up to commensurability. Recall that two lattices $\Gamma_1, \Gamma_2 <G$ are commensurable if and only if there exists $g\in G$ such that $\Gamma_1 \cap \Gamma_2^g$ has finite index in both $\Gamma_1$ and $\Gamma_2^g$. In particular, commensurable lattices have covolumes that are commensurable real numbers, that is, they have a rational ratio.
	
	The notion of commensurability was generalized to better suit topological and large-scale geometric properties and to compare groups without requiring them to be subgroups of a common group. More precisely, we say that two groups $H$ and $K$ are (abstractly) commensurable if they have isomorphic finite index subgroups. In this article, we will only be concerned with the notion of abstract commensurability and we simply refer to it as commensurability.
	
	As we mentioned, commensurability is closely related to the large-scale geometry of the group. Indeed, any finitely generated group can be endowed with a natural word-metric which is well-defined up to quasi-isometry and since any finitely generated group is quasi-isometric to any of its finite index subgroups, it follows that commensurable groups are quasi-isometric.
	
	Gromov suggested to study groups from this geometric point of view and understand the relation between these two concepts. More precisely, a basic problem in geometric group theory is to classify commensurability and quasi-isometry classes (perhaps within a certain class) of finitely generated groups and to understand whether or not these classes coincide.
	
	The classification of groups up to commensurability (both in the abstract and classical case) has a long history and a number of famous solutions for very diverse classes of groups such as Lie groups, hyperbolic 3-manifold groups, pro-finite groups, Grigorchuk-Gupta-Sidki groups, etc, see for instance \cite{5, 13, 20, 25, 26, GrW, Gar}.

	In this paper, we focus on the question of classification of right-angled Artin groups, RAAGs for short, up to commensurability. Recall that a RAAG is a finitely presented group $\GG(\Gamma)$ which can be described by a finite simplicial graph $\Gamma$, the commutation graph, in the following way: the vertices of $\Gamma$ are in bijective correspondence with the generators of $\GG(\Gamma)$ and the set of defining relations of $\GG(\Gamma)$ consists of commutation relations, one for each pair of generators connected by an edge in $\Gamma$.
	
	RAAGs have become central in group theory, their study interweaves geometric group theory with other areas of mathematics. This class interpolates between two of the most classical families of groups, free and free abelian groups, and its study provides uniform approaches and proofs, as well as rich generalisations of the results for free and free abelian groups. The study of this class from different perspectives has contributed to the development of new, rich theories such as the theory of CAT(0) cube complexes and has been an essential ingredient in Agol's solution to the Virtually Fibered Conjecture.
	
	The commensurability classification of RAAGs has been previously solved for the following classes of RAAGs:
	\begin{itemize}
		\item	Free groups \cite{56, 25, 47}, \cite[1.C]{31};
		\item	Free Abelian groups, \cite{30, 3};
		\item   $F_m \times \mathbb{Z}^n$, \cite{58};
		\item	Free products of free groups and free Abelian groups, \cite{5};
		\item	$F_m \times F_n$ with $m,n \ge 2$, \cite{60, 17};
		\item	$\GG(\Gamma)$, where $\Gamma$ is a tree of diameter $\le 3$, \cite{BN};
		\item   $\GG(\Gamma)$, where $\Gamma$ is connected, triangle- and square-free graph without any degree one vertices, \cite{KKi}
		\item	$\GG(\Gamma)$, when the outer automorphism of $\GG$ is finite, $\Gamma$ is star-rigid and does not have induced 4-cycles, \cite{Huang}.
	\end{itemize}
	
	It turns out that, inside the class of RAAGs, the classification up to commensurability coincides with the quasi-isometric classification for all known cases. These rigidity results are mainly a consequence of the rigid structure of the intersection pattern of flats in the universal cover of the Salvetti complex.
	
	In this paper we describe the commensurability classes of RAAGs defined by trees of diameter at most 4 and describe the ``minimal'' group in each commensurability class (minimal in terms of number of generators or the rank of its abelianization). In particular, we show that there exist infinitely many different commensurability classes confirming a conjecture of Behrstock and Neumann. In their paper \cite{BN}, the authors show that RAAGs defined by trees of diameter at least 3 are quasi-isometric, so we provide first examples of RAAGs that are quasi-isometric but not commensurable. As in the classical case of lattices in locally compact topological groups, we define an ordered set that plays the role of the covolume and prove that the groups are commensurable if and only if the sets are commensurable, that is they have the same cardinality and constant rational ordered ratios.

	\subsection{Main results}
	
	Let $\Delta=(V(\Delta),E(\Delta))$ be a simplicial graph, then we denote by $\mathbb{G}(\Delta)$ the RAAG defined by the commutation graph $\Delta$.
	We call the vertices of the graph $\Delta$ the {\it canonical generators} of $\mathbb{G}(\Delta)$.
	
	For our purposes, it will be convenient to encode finite trees of diameter four as follows. Let $T$ be any finite tree of diameter four.  Let $f$ be a path (without backtracking) of length four from one leaf of $T$ to another. By definition $f$ contains 5 vertices and let $c_f\in V(T)$ be the middle vertex in $f$. It is immediate to see that the choice of the vertex $c=c_f$ does not depend on the choice of the path $f$ of length four. We call $c$ the \emph{center} of $T$.
	
	Any leaf of $T$ connected to $c$ by an edge is called a \emph{hair vertex}. Vertices connected to $c$ by an edge which are not hair are called \emph{pivots}.  Any finite tree $T$ of diameter 4 is uniquely defined by the number $q$ of hair vertices and by the number $k_i$ of pivots of a given degree $d_i+1$.  Hence we encode any finite tree of diameter $4$ as $T((d_1,k_1), \dots, (d_l,k_l);q)$. Here all $d_i$ and $k_i$ and $l$ are positive integers, $d_1 < d_2 < \ldots < d_l$, and $q$ is a non-negative integer; moreover, either $l \geq 2$ or $l=1$ and $k_1 \geq 2$, so that $T$ indeed has diameter 4. See Figure \ref{fig:tree4}.

	\begin{figure}[!h]
		\centering
		\includegraphics[keepaspectratio,width=3.2in]{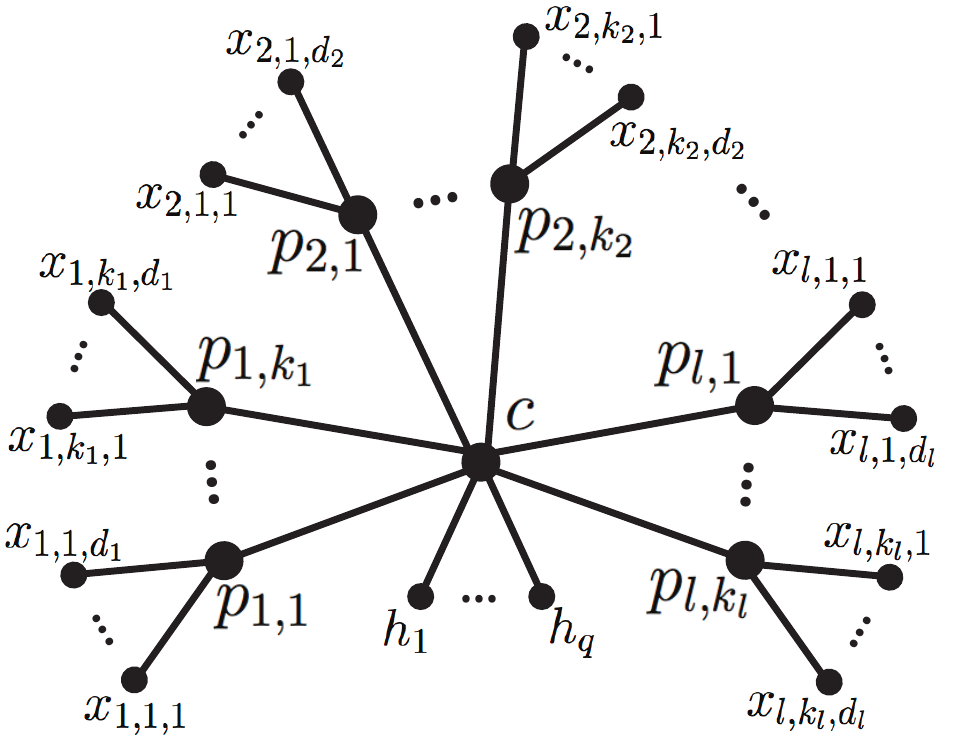}
		\caption{Tree of diameter 4} \label{fig:tree4}
	\end{figure}
	
	Given a tree of diameter four $T=T((d_1,k_1), \dots, (d_l,k_l);q)$, we denote by $M(T)=M(T((d_1,k_1), \dots, (d_l,k_l);q))$ the set of numbers $d_i$, that is,  we have that $M(T((d_1,k_1), \dots, (d_l,k_l);q))=\{d_1 < d_2 < \dots <d_l \}$.

	In the original usage two real numbers $a,b \in \mathbb{R}$ are commensurable if and only if the ratio $\frac{a}{b}$ is rational. In this fashion, we will say that two ordered sets $P=\{p_1 < \dots < p_k\}$ and $Q=\{q_1 < \dots < q_l\}$ are \emph{commensurable} if
	\begin{itemize}
		\item they have the same cardinality, i.e. $k=l$,
		\item there exists $c\in \mathbb{Q}$ such that each quotient $\frac{p_i}{q_i} = c$.
	\end{itemize}
		 In this case we write $P=cQ$.
	
		We say that a set $M=\{m_1, \dots, m_k\}$, $m_i \in \BN$, is {\it minimal} if the greatest common divisor $d=\gcd(m_1, \dots, m_k)$ is 1. It is clear that for each commensurability class of a set $M \subset \BN^k$, there exists a minimal set that belongs to the class, namely $\{\frac{m_1}{d}, \dots, \frac{m_k}{d}\}$.
	
	We show that the commensurability class of the set $M(T((d_1,k_1), \dots, (d_l,k_l);q))$ determines the commensurability class of the RAAG defined by the tree of diameter 4.

  \begin{theorem4}[Characterisation of commensurability classes]
		Let $T$ and $T'$ be two finite trees of diameter 4,  $T=T((d_1,k_1), \dots, (d_l,k_l);q)$ and $T'=T((d_1',k_1'), \dots, (d_{l'}',k_{l'}');q')$. Let $\GG=\mathbb{G}(T)$ and $\GG'=\mathbb{G}(T')$. Consider the sets $M=M(T)$, $M'=M(T')$.
		Then $\GG$ and $\GG'$ are commensurable if and only if $M$ and $M'$ are commensurable. 	
	\end{theorem4}
	
For $n > 1$ denote by $P_n$ the path with $n$ vertices and $n-1$ edges.	By a minimal RAAG in some class $C$ we mean a RAAG in $C$ with the minimal number of generators, i.e., defined by a graph with the minimal number of vertices among all commutation graphs of RAAGs in $C$.
	
	\begin{theorem3}
		Let $T=T((d_1,k_1), \dots, (d_l,k_l);q)$ be a finite tree of diameter 4. Let $\mathcal C(T)$ be the commensurability class of $\GG(T)$ and let $M=M(T)$ be as above, so $|M|=l$. Then the minimal RAAG that belongs to $\mathcal C(T)$ is either the RAAG defined by the tree $T'=T((d_1',1), \dots, (d_l',1);0)$, where $M(T')$ is minimal in the commensurability class of $M$, if $|M|>1$, or the RAAG defined by the path of diameter 3, that is $\GG(P_4)$, if $|M|=1$.
	\end{theorem3}

	Our results extend naturally to the commensurability classification of some right-angled Coxeter groups. Recall that every RAAG $\GG$ embeds naturally as a finite index subgroup into a right-angled Coxeter group, say $C(\GG)$, see \cite{DJ}. Hence, we have the following result
	\begin{cor*}
		There are infinitely many pair-wise quasi-isometric, but pair-wise not commensurable right-angled Coxeter groups defined by graphs of diameter 4 with cliques of dimension $2$.
	\end{cor*}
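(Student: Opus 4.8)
The plan is to deduce the corollary from Theorem~\ref{thm:crit}, the quasi-isometry result of Behrstock and Neumann \cite{BN}, and the Davis--Januszkiewicz embedding \cite{DJ}, using that a finitely generated group is simultaneously quasi-isometric to, and commensurable with, each of its finite-index subgroups.

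First I would produce an explicit infinite family of pairwise non-commensurable RAAGs among trees of diameter $4$. For each integer $n \geq 2$ set $T_n = T((1,1),(n,1);0)$; this is a legitimate tree of diameter $4$ since $l=2$ and $1 < n$, and its associated set is $M(T_n) = \{1 < n\}$. For $n \neq m$ the ordered sets $\{1,n\}$ and $\{1,m\}$ are not commensurable: matching the smallest elements forces the ratio $c=1$, after which matching the largest elements forces $n=m$. Hence, by Theorem~\ref{thm:crit}, the groups $\GG(T_n)$ are pairwise non-commensurable, while being infinitely many.

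Next comes the transfer to right-angled Coxeter groups. By \cite{BN} all RAAGs defined by trees of diameter at least $3$ are quasi-isometric, so the $\GG(T_n)$ are pairwise quasi-isometric. By \cite{DJ} each $\GG(T_n)$ embeds as a finite-index subgroup of a RACG $C(\GG(T_n))$. Since any group is quasi-isometric to, and commensurable with, its finite-index subgroups, and since both are equivalence relations, I would argue as follows. Each $C(\GG(T_n))$ is quasi-isometric to $\GG(T_n)$, so the $C(\GG(T_n))$ are pairwise quasi-isometric. For commensurability, each $C(\GG(T_n))$ is commensurable with $\GG(T_n)$; were $C(\GG(T_n))$ and $C(\GG(T_m))$ commensurable, transitivity would force $\GG(T_n)$ and $\GG(T_m)$ to be commensurable, contradicting the previous paragraph. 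Thus the $C(\GG(T_n))$ furnish infinitely many pairwise quasi-isometric but pairwise non-commensurable RACGs.

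It remains to verify the stated shape of the defining graphs. Writing out the graph $\Lambda_n$ underlying $C(\GG(T_n))$ explicitly from the construction of \cite{DJ}, I would check that $\Lambda_n$ has diameter exactly $4$ and that its maximal complete subgraphs have precisely three vertices, so that the associated flag complex is $2$-dimensional. This explicit combinatorial check, in particular confirming that the clique dimension is genuinely $2$ rather than $1$ as for the triangle-free trees $T_n$, is the only real computation and the step where care is required; everything else is formal manipulation of the two equivalence relations.
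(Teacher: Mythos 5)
Your first two paragraphs are correct and are precisely the intended deduction: the paper states the corollary without proof, deriving it exactly from Theorem~\ref{thm:crit} (or Theorem~\ref{th2}), the quasi-isometry result of \cite{BN}, the embedding of \cite{DJ}, and the fact that quasi-isometry and commensurability are equivalence relations preserved under passage to finite-index subgroups; your family $T_n=T((1,1),(n,1);0)$ is the paper's $P_{1,n}$, and your non-commensurability and transfer arguments are sound.

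The problem is your final paragraph, and it is a genuine error rather than a matter of taste. In the Davis--Januszkiewicz construction, $C(\GG(\Gamma))$ is the right-angled Coxeter group on the \emph{double} $\Gamma'$ of $\Gamma$: the vertex set is $V(\Gamma)\times\{0,1\}$, and $(u,i)$ is adjacent to $(v,j)$ if and only if $u\neq v$ and $u$ is adjacent to $v$ in $\Gamma$; the two copies of a vertex are deliberately \emph{not} joined, since their product is the image of the corresponding Artin generator and must have infinite order. Consequently every clique of $\Gamma'$ projects injectively onto a clique of $\Gamma$, so for $\Gamma=T_n$ (a tree, hence triangle-free) the graph $\Lambda_n$ is again triangle-free: its maximal complete subgraphs have exactly \emph{two} vertices, not three, and the combinatorial check you propose would come out false. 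The phrase ``cliques of dimension $2$'' in the corollary must therefore be read as cliques on two vertices (equivalently, the associated Davis complex is a $2$-dimensional square complex), not as a $2$-dimensional flag complex. With that reading the verification is immediate and your proof is repaired without touching anything else: $\Lambda_n$ is triangle-free with at least one edge, and its diameter equals $4$ because distances between distinct vertices of $T_n$ are preserved in the double (lift a geodesic, choosing labels arbitrarily), while the two copies of a single vertex are at distance $2$ through any common neighbour.
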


\subsection{Strategy of the proof}

As we discussed, in previous results on commensurability of RAAGs the structure of the intersection pattern of flats inside the universal cover of the Salvetti complex is so rigid that the large-scale geometry that it determines forces commensurability, see \cite{KKi, Huang}.
	
In our case, all trees of diameter 4 are quasi-isometric and so geometry is not sufficient to determine the commensurability classes. However, we will use the structure of the intersection pattern of flats together with algebra to derive the result.

In broad strokes, the strategy is as follows. To two given RAAGs $\GG(T)$ and $\GG(T')$ defined by trees of diameter 4, we associate a linear system of equations $S(T,T')$ and show that, if $\GG(T)$ and $\GG(T')$ are commensurable, then the system $S(T,T')$ has positive integer solutions, see Section \ref{sec:lineareq}. We then study the system $S(T,T')$ and determine conditions on the trees $T$ and $T'$ for which the system does not have positive integer solutions. This allows us to conclude, that the corresponding RAAGs are not commensurable, see Sections \ref{Sec:InfCom} and \ref{sec:noncomm}. In order to obtain a characterisation, we prove that if the conditions are not satisfied (and so the system $S(T,T')$ has positive integer solutions), then we can exhibit isomorphic finite index subgroups $H<\GG(T)$ and $H'<\GG(T')$ and conclude that $\GG(T)$ and $\GG(T')$ are commensurable, see Section \ref{Sec:com}.

We believe that the general strategy of our proof, i.e. to reduce the existence of subgroups to a linear system of equations with positive integer solutions, can be used to study commensurability classes of RAAGs defined by trees and more general RAAGs. The real obstacle is to elucidate the necessary conditions for the system of equations to have positive integer solutions -- ``just'' a linear algebra problem.
	
\section{Basics on RAAGs} \label{sec:prelim}

In this section we recall some preliminary results on RAAGs and introduce the notation we use throughout the text.

Let $\Gamma=(V(\Gamma), E(\Gamma))$ be a finite simplicial graph (i.e., a graph without loops and multiple edges) with vertex set $V(\Gamma)$ and edge set $E(\Gamma)$. Then, the {\it right-angled Artin group} (or {\it RAAG} for short) $\GG=\GG(\Gamma)$ defined by the (commutation) graph $\Gamma$ is the group given by the following presentation:
$$
\GG=\langle V(\Gamma)\mid [v_1,v_2]=1, \hbox{ whenever } (v_1,v_2)\in E(\Gamma)\rangle.
$$
The elements of $V(\Gamma)$ are called the {\it canonical generators} of $\GG$.

Let $\Gamma'=(V(\Gamma'), E(\Gamma'))$ be a full subgraph of $\Gamma$. It is not hard to show, see for instance \cite{EKR}, that the RAAG $\GG'=\GG(\Gamma')$ is the subgroup  of $\GG$ generated by $V(\Gamma')$, i.e. $\GG(\Gamma')=\langle V(\Gamma')\rangle$.

Let $X=V(\Gamma)$, and $u$ be a word in the alphabet $X \cup X^{-1}$. Denote by $[u]$ the element of $\GG$ corresponding to $u$, we also say that the word $u$ represents $[u] \in \GG$.
We denote the length of a word $u$ by $|u|$. A word $u$ is called {\it geodesic} if it has minimal length among all the words representing the same element $[u]$ of $\GG$. In RAAGs, any word can be transformed to a geodesic (representing the same element of $\GG$) by applying only free cancellations and permutations of letters allowed by the commutativity relations of $\GG$. Moreover, any two geodesic words representing the same element of $\GG$ can be transformed to each other by applying only the commutativity relations of $\GG$. See \cite{EKR} for details.

Let $w \in \GG$, and $u$ be any geodesic word representing $w$. Then the length of $w$ is defined to be the length of $u$, $|w|=|u|$.
   An element $w\in \GG$ is called \emph{cyclically reduced} if $|w^2|=2|w|$,
    or, equivalently, the length of $w$ is minimal in the conjugacy class of $w$. Every element is conjugate to a cyclically reduced one. 

We say that a letter $x$ of $X$ occurs in a word $u$ if at least one of the letters in $u$ is either $x$ or $x^{-1}$. For a given element $w \in \GG$, denote by $\az(w)$ the set of letters of $X$ occurring in $u$, where $u$ is any geodesic word representing $w$ (this does not depend on the choice of $u$, due to the above remarks). Also define $\BA(w)$ to be the subgroup of $\GG$ generated by all the letters in $X$ that do not occur in a geodesic word $u$ (which represents $w$) and commute with $w$.  Again, the subgroup $\BA(w)$ is well-defined (independent of the choice of $u$), due to the above remarks. Note also that a letter $x$ commutes with $w$ if and only if $x$ commutes with every letter in $\az(w)$, see \cite{EKR}.

In this paper we always conjugate as follows: $g^h=hgh^{-1}$. 		

	
For a RAAG $\GG(\Gamma)$ we define its non-commutation graph $\Delta=(V(\Delta), E(\Delta))$ as follows: $V(\Delta)=V(\Gamma)$ and $E(\Delta)=(V(\Gamma)\times V(\Gamma)) \setminus E(\Gamma)$, i.e., $\Delta$ is the complement graph of $\Gamma$. The graph $\Delta$ is a union of its connected components $I_1, \ldots , I_k$, which induce a decomposition of $\GG$ as the direct product
	$$
	\GG= \GG(I_1) \times \cdots \times \GG(I_k).
	$$
	
	Given a cyclically reduced $w \in \GG$ and the set $\az(w)$, consider the graph $\Delta (\az(w))$, which is the full subgraph of $\Delta$ with the vertex set consisting of letters in $\az(w)$. If the graph is connected, we call $w$ a \emph{block}. If $\Delta(\az(w))$ is not connected, then we can decompose $w$ into the product
	\begin{equation} \label{eq:bl}
	w= w_{j_1} \cdot w_{j_2} \cdots w_{j_t};\ j_1, \dots, j_t \in J,
	\end{equation}
	where $|J|$ is the number of connected components of $\Delta(\az(w))$ and the word $w_{j_i}$ is a word in the letters from the $j_i$-th connected component. Clearly, the words $\{w_{j_1}, \dots, w_{j_t}\}$ pairwise commute. Each word $w_{j_i}$, $i \in {1, \dots,t}$, is a block and so we refer to expression (\ref{eq:bl}) as the {\it block decomposition} of $w$.

	An element $w\in \GG$ is called a \emph{least root} (or simply, root) of $v\in \GG$ if there exists a positive integer $1 \leq m\in \BN$ such that $v=w^m$ and there does not exist $w'\in \GG$ and $1< m'\in \BN$ such that $w={w'}^{m'}$. In this case, we write $w=\sqrt{v}$. By a result from \cite{DK}, RAAGs have least roots, that is the root element of $v$ is defined uniquely for every $v \in \GG$.
	
	The next result describes centralisers of elements in RAAGs. Since centralizers of conjugate elements are conjugate subgroups, it suffices to describe centralizers of cyclically reduced elements.
	
	\begin{theorem}[Centraliser Theorem, Theorem 3.10,  \cite{Serv} and \cite{DK}] \label{thm:centr} \
		Let $w\in \GG$ be cyclically reduced and $w=v_1\dots v_k$ be its block decomposition. Then, the centraliser of $w$ is the following subgroup of $\GG$:
		\begin{equation} \notag
		C(w)=\langle \sqrt{v_1}\rangle \times \cdots \times \langle \sqrt{v_k} \rangle\times \BA(w).
		\end{equation}
	\end{theorem}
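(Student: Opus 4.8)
The plan is to prove the two inclusions of the asserted equality separately; the inclusion of the right-hand side into $C(w)$ is routine, and essentially all of the content lies in the reverse inclusion, which I would reduce to the case of a single block by two successive direct-product decompositions.

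For the easy inclusion, first note that each root $\sqrt{v_i}$ satisfies $\az(\sqrt{v_i})=\az(v_i)$: since $v_i$ is a block it is cyclically reduced, so no cancellation occurs between consecutive copies in a power, and hence a cyclically reduced element and its root share the same support. Distinct blocks $v_i$ and $v_j$ have supports lying in distinct connected components of $\Delta(\az(w))$, so every letter of $\az(v_i)$ is adjacent in $\Gamma$ to, hence commutes with, every letter of $\az(v_j)$; consequently $\sqrt{v_i}$ commutes with each $v_j$, and therefore with $w=v_1\cdots v_k$. The generators occurring in $\BA(w)$ commute with all of $\az(w)$, hence with $w$ and with each $\sqrt{v_i}$. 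Since the supports $\az(\sqrt{v_1}),\dots,\az(\sqrt{v_k})$ and the support of $\BA(w)$ are pairwise disjoint and pairwise joined in $\Gamma$, the subgroup they generate is the internal direct product appearing in the statement, with each $\langle\sqrt{v_i}\rangle\cong\BZ$; this gives the right-to-left inclusion.

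For the reverse inclusion, take $g\in C(w)$. The first step is a \emph{support lemma}: if $g$ commutes with the cyclically reduced $w$, then every generator occurring in $g$ either lies in $\az(w)$ or commutes with all of $\az(w)$. I would prove this from the normal form theory of RAAGs, by examining how a letter of $g$ not commuting with $\az(w)$ would be forced to cross the letters of $w$ differently in $gw$ and in $wg$. Writing $L$ for the set of generators outside $\az(w)$ that commute with all of $\az(w)$, so that $\BA(w)=\GG(L)$ and $\GG(\az(w)\cup L)=\GG(\az(w))\times\BA(w)$, the lemma yields a factorisation $g=g_0 b$ with $g_0\in\GG(\az(w))$ and $b\in\BA(w)$; since $b$ commutes with $w$, so does $g_0$, and it remains to locate $g_0$. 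Inside $\GG(\az(w))$ the non-commutation graph is $\Delta(\az(w))$, whose connected components are exactly the block supports, so $\GG(\az(w))=\GG(\az(v_1))\times\cdots\times\GG(\az(v_k))$ and the centraliser of $w=v_1\cdots v_k$ in this product is $\prod_i C_{\GG(\az(v_i))}(v_i)$. Thus everything reduces to the single-block statement: if $v$ is a block, then $C_{\GG(\az(v))}(v)=\langle\sqrt{v}\rangle$.

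This single-block case is the main obstacle. Here $\GG(\az(v))$ is directly indecomposable, since the non-commutation graph on $\az(v)$ is connected, so one must rule out any second independent direction commuting with $v$. My approach would be to attach to $v$ its bi-infinite periodic geodesic $\cdots vv\cdots$ and to show that any $g$ commuting with $v$ acts on it as a translation, defining a homomorphism from $C_{\GG(\az(v))}(v)$ to $\BZ$ whose kernel consists of elements commuting with every generator of $\az(v)$; by indecomposability of $\GG(\az(v))$ this kernel is trivial. Injectivity then forces $C_{\GG(\az(v))}(v)$ to be infinite cyclic, and uniqueness of roots (\cite{DK}) identifies its generator with $\sqrt{v}$. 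The delicate point, and where I expect to follow the combinatorics of \cite{Serv,DK} most closely, is the verification that commuting with $v$ really does reduce to a single translation of the periodic geodesic — equivalently, that connectedness of $\Delta(\az(v))$ prevents $g$ from shifting a proper sub-collection of the letters of $v$ independently.
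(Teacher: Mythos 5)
The paper itself offers no proof of this theorem: it is imported verbatim from the literature (Theorem 3.10 of \cite{Serv}; see also \cite{DK}), so there is no internal argument to measure yours against. Judged on its own, your proposal gets all of the \emph{formal} reductions right: the right-to-left inclusion; the factorisation $g=g_0b$ with $g_0\in\GG(\az(w))$ and $b\in\BA(w)$ once the support lemma is granted; the splitting $\GG(\az(w))=\GG(\az(v_1))\times\cdots\times\GG(\az(v_k))$ along the components of $\Delta(\az(w))$, together with the fact that centralisers in a direct product are products of centralisers; and the recovery of $\sqrt{v}$ from any generator of an infinite cyclic centraliser via uniqueness of roots.

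But the two steps that carry all the mathematical content are not proved. First, the support lemma --- every letter of $g\in C(w)$ lies in $\az(w)$ or commutes with all of $\az(w)$ --- is backed only by a one-sentence heuristic about letters of $g$ being ``forced to cross'' $w$; this is a genuine cancellation theorem whose proof needs the full normal-form machinery of RAAGs and is comparable in depth to the statement being proved. Second, and more seriously, the single-block case hinges entirely on your claim that the kernel of the translation homomorphism is exactly the set of elements commuting with every generator of $\az(v)$. That claim \emph{is} the theorem in this case (it says that no element of $C_{\GG(\az(v))}(v)$ moves transversally to the axis of $v$); it is stated without proof, and you concede that here you would ``follow the combinatorics of \cite{Serv,DK}''. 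Since those are precisely the sources the paper cites for the whole theorem, your proposal is a correct reduction of the Centraliser Theorem to its classical core rather than a proof of it. Two lesser issues: left multiplication by $g\in C(v)$ does not preserve the bi-infinite geodesic $\cdots vv\cdots$ (it carries it to a parallel line), so your translation map must actually be defined on the family of such parallels, e.g.\ on the min-set of $v$, which takes care to set up; and triviality of the kernel should be deduced not from direct indecomposability as such ($\BZ$ is indecomposable with non-trivial centre) but from the fact that a RAAG on at least two generators with connected non-commutation graph has trivial centre, with the case $|\az(v)|=1$ split off separately.
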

	The following two corollaries follow immediately from Theorem \ref{thm:centr} and the definitions.
	\begin{corollary} \label{cor:centr}
		For any $w\in \GG$ we have $C(w)=C(\sqrt{w})$.
	\end{corollary}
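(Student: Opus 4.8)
The plan is to reduce to the cyclically reduced case and then simply read both centralisers off the Centraliser Theorem (Theorem \ref{thm:centr}), checking that the two formulas coincide term by term.

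First I would dispose of the general case by conjugation. Every element is conjugate to a cyclically reduced one, so write $w = h w_0 h^{-1}$ with $w_0$ cyclically reduced. Since conjugation by $h$ is an automorphism of $\GG$, it preserves the property of being a least root, whence $\sqrt{w} = h\,\sqrt{w_0}\,h^{-1}$; and since centralisers of conjugate elements are conjugate, $C(w) = h\,C(w_0)\,h^{-1}$ and $C(\sqrt{w}) = h\,C(\sqrt{w_0})\,h^{-1}$. Thus the identity $C(w)=C(\sqrt{w})$ for arbitrary $w$ follows from the same identity for the cyclically reduced representative $w_0$, and it suffices to treat cyclically reduced $w$.

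So assume $w$ is cyclically reduced and write $w = (\sqrt{w})^m$. The key preliminary observation is that $\sqrt{w}$ is again cyclically reduced: otherwise $\sqrt{w}$ would be conjugate to a strictly shorter element $u'$, and then $w$ would be conjugate to $u'^m$, which is strictly shorter than $w$, contradicting cyclic reducedness. Moreover, raising a cyclically reduced element to a positive power changes neither its geodesic support nor its length pattern, so $\az(\sqrt{w}) = \az(w)$; consequently the non-commutation graphs $\Delta(\az(\sqrt{w}))$ and $\Delta(\az(w))$ coincide, and $w$ and $\sqrt{w}$ have the same number $k$ of blocks supported on the same partition of $\az(w)$ into connected components. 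Writing the block decomposition of the root as $\sqrt{w} = u_1 \cdots u_k$, the fact that distinct blocks commute gives $w = u_1^m \cdots u_k^m$; since each $u_i^m$ is itself a block (same connected support as $u_i$), uniqueness of the block decomposition identifies this with $w = v_1 \cdots v_k$, i.e. $v_i = u_i^m$. Because the least root of a power equals the least root, $\sqrt{v_i} = \sqrt{u_i^m} = \sqrt{u_i}$ for every $i$. Finally $\BA(w) = \BA(\sqrt{w})$, since $\BA(\cdot)$ depends only on the support — a letter commutes with an element precisely when it commutes with every letter of its support — and these supports agree. Applying Theorem \ref{thm:centr} to both elements then yields
$$
C(w) = \langle \sqrt{v_1}\rangle \times \cdots \times \langle \sqrt{v_k}\rangle \times \BA(w) = \langle \sqrt{u_1}\rangle \times \cdots \times \langle \sqrt{u_k}\rangle \times \BA(\sqrt{w}) = C(\sqrt{w}).
$$

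The only points I would state with care, rather than treat as automatic, are that taking a positive power preserves both cyclic reducedness and the support $\az(\cdot)$ (this is what makes the two block decompositions live over the same vertex partition), and that passing to the least root is insensitive to blockwise powers, i.e. $\sqrt{u_i^m}=\sqrt{u_i}$. Neither is deep, which is precisely why the text can record this as an immediate consequence; once they are in place the conclusion is a direct substitution into the Centraliser Theorem.
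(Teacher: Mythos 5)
Your overall plan (reduce to the cyclically reduced case, then compare the two outputs of Theorem \ref{thm:centr}) is workable, and the block-matching part is fine: the blocks of $w$ are the $m$-th powers of the blocks of $\sqrt{w}$, $\sqrt{u_i^m}=\sqrt{u_i}$ by uniqueness of least roots, and $\BA(w)=\BA(\sqrt{w})$ because $\BA$ depends only on the support. The genuine gap is in your ``key preliminary observation'' that $\sqrt{w}$ is cyclically reduced: as written, the argument is circular. You need the inequality $m|u'| < |w|$, but the only a priori control on $|w|$ is $|w|=|(\sqrt{w})^m|\le m|\sqrt{w}|$, and the bound $|u'|<|\sqrt{w}|$ then gives nothing: since $w$ is cyclically reduced and conjugate to $u'^m$, you in fact know $|w|\le m|u'|$, so no contradiction follows from the stated facts alone. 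To get $m|u'|<|w|$ you must know that if $\sqrt{w}=gu'g^{-1}$ is a geodesic conjugate form with $g\neq 1$, then $(\sqrt{w})^m=gu'^mg^{-1}$ is still geodesic, i.e.\ $|(\sqrt{w})^m|=m|u'|+2|g|$ (no cancellation across the conjugator when powering); equivalently you could use $|(\sqrt{w})^m|=m|\sqrt{w}|$, but that is exactly the cyclic reducedness of $\sqrt{w}$ you are trying to prove. The fact is true and standard, but your one-line justification does not establish it, and the rest of your argument (equality of supports, matching block decompositions, applying Theorem \ref{thm:centr} to $\sqrt{w}$) all rests on it.

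Note also that the whole apparatus can be bypassed, which is presumably why the paper records this as immediate. The inclusion $C(\sqrt{w})\subseteq C(w)$ is trivial: anything commuting with $\sqrt{w}$ commutes with its power $w=(\sqrt{w})^m$. For the converse, use uniqueness of least roots in RAAGs, stated in Section \ref{sec:prelim} and due to \cite{DK}: if $x\in C(w)$, then $x\sqrt{w}x^{-1}$ satisfies $(x\sqrt{w}x^{-1})^m=xwx^{-1}=w$ and is not a proper power (this property is preserved by conjugation), so it is a least root of $w$; by uniqueness, $x\sqrt{w}x^{-1}=\sqrt{w}$, i.e.\ $x\in C(\sqrt{w})$. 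This two-line argument needs neither cyclic reduction, nor block decompositions, nor even the full strength of Theorem \ref{thm:centr}, and it avoids the gap above entirely.
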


	\begin{corollary}\label{cor:123}
		For any vertex $v$ of $\Gamma$ the centralizer of $v$ in $\GG(\Gamma)$ is generated by all the vertices in the star of $v$ in $\Gamma$, i.e., by all the vertices adjacent to $v$ and $v$ itself. \\ In particular, if $\Gamma$ is a tree, then the centralizer of any vertex in $\Gamma$ is isomorphic to $\mathbb{Z} \times F_n$, where $n$ is the degree of $v$, so it is isomorphic to $\mathbb{Z}^2$ if $v$ is a leaf, and contains a non-abelian free group otherwise.
		
	\end{corollary}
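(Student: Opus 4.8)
The plan is to deduce this directly from the Centraliser Theorem (Theorem \ref{thm:centr}), since $v$ is a single canonical generator and hence an especially simple element. First I would observe that the one-letter word $v$ is geodesic, so $|v|=1$ and $|v^2|=2=2|v|$, showing $v$ is cyclically reduced. Its alphabet is $\az(v)=\{v\}$, and the full subgraph of the non-commutation graph $\Delta$ on this single vertex is trivially connected, so $v$ is a block and its block decomposition consists of the single factor $v$ itself. Moreover a single generator admits no proper root (in the abelianisation $v=w^m$ would force $m$ to divide $1$ in the $v$-coordinate), so $\sqrt{v}=v$ and $\langle\sqrt{v}\rangle=\langle v\rangle\cong\mathbb{Z}$.

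Applying Theorem \ref{thm:centr} to $w=v$ then gives $C(v)=\langle v\rangle\times\BA(v)$, and it remains to identify $\BA(v)$. By definition $\BA(v)$ is generated by those letters $x\in X$ that do not occur in the geodesic word $v$ (that is, $x\neq v$) and commute with $v$; since commutation of canonical generators is recorded exactly by the edges of $\Gamma$, these are precisely the vertices adjacent to $v$. Hence $C(v)=\langle v\rangle\times\langle x : x \text{ adjacent to } v\rangle$, which is the subgroup generated by the star of $v$, proving the first assertion.

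For the second assertion I would specialise to the case where $\Gamma$ is a tree. Writing $x_1,\dots,x_n$ for the $n$ neighbours of $v$, the key point is that in a tree no two neighbours of $v$ can be adjacent, since an edge $x_ix_j$ together with $vx_i$ and $vx_j$ would form a triangle, contradicting the absence of cycles. Consequently the full subgraph on the star of $v$ is a star: $v$ is joined to each $x_i$, but there are no edges among the $x_i$. In terms of defining relations this means $v$ commutes with every $x_i$ while the $x_i$ satisfy no relations among themselves, so $\langle x_1,\dots,x_n\rangle\cong F_n$ and $C(v)=\langle v\rangle\times\langle x_1,\dots,x_n\rangle\cong\mathbb{Z}\times F_n$. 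The stated special cases then follow: if $v$ is a leaf then $n=1$ and $C(v)\cong\mathbb{Z}\times\mathbb{Z}=\mathbb{Z}^2$, while if $v$ is not a leaf then $n\geq 2$, so $F_n$, and hence $C(v)$, contains a non-abelian free subgroup.

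The argument is essentially routine once the Centraliser Theorem is available; the only points needing a moment's care are the verification that $v$ is a block with $\sqrt{v}=v$ (so the theorem applies with the single free factor $\langle v\rangle$) and the triangle-free observation that promotes the neighbours of $v$ to free generators in the tree case. I do not anticipate any genuine obstacle here.
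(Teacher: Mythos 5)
Your proof is correct and takes essentially the same route as the paper, which dispatches this corollary by noting that it follows immediately from Theorem \ref{thm:centr} and the definitions; you have simply made the routine details explicit (that $v$ is a cyclically reduced block with $\sqrt{v}=v$, that $\BA(v)$ is generated by the neighbours of $v$, and the triangle-free observation that makes the neighbours free generators when $\Gamma$ is a tree).
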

The following corollary will play a key role in this paper.	
	
	\begin{corollary}\label{cor:centraliserstrees}
			Let $\Gamma$ be a tree and let $w\in \GG(\Gamma)$. Then $C(w)$ is non-abelian if and only if $w$ is conjugate to a power of a non-leaf vertex generator of $\GG(\Gamma)$, and in this case $C(w)\simeq \BZ\times F=\langle \sqrt{w}\rangle \times F$, where $F$ is a non-abelian free group.
	\end{corollary}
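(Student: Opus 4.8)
The plan is to reduce to the cyclically reduced case and then read off non-abelianity from the Centraliser Theorem (Theorem \ref{thm:centr}). Since the centraliser of a conjugate of $w$ is the corresponding conjugate of $C(w)$, and both non-abelianity and the isomorphism type $\BZ \times F$ are invariant under conjugation, I may assume $w$ is cyclically reduced; I also assume $w \neq 1$ (the case $w=1$ gives the degenerate $C(w)=\GG$, which falls outside the stated conclusion). Writing the block decomposition $w = v_1 \cdots v_k$, Theorem \ref{thm:centr} gives $C(w) = \langle \sqrt{v_1}\rangle \times \cdots \times \langle \sqrt{v_k} \rangle \times \BA(w)$. Each cyclic factor $\langle \sqrt{v_i}\rangle$ is abelian, so $C(w)$ is non-abelian if and only if $\BA(w)$ is non-abelian.

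Next I would analyse $\BA(w)$ using the tree structure. By definition $\BA(w)$ is generated by the vertices $x \notin \az(w)$ commuting with $w$, which by the remark preceding Theorem \ref{thm:centr} means exactly the vertices adjacent in $\Gamma$ to every vertex of $\az(w)$; since $\BA(w)$ is the RAAG on the full subgraph they span, it is non-abelian precisely when two such vertices $x,y$ are non-adjacent. The key step is then a no-$4$-cycle argument: if $\az(w)$ contained two distinct vertices $z_1, z_2$, the edges $\{x,z_1\},\{z_1,y\},\{y,z_2\},\{z_2,x\}$ would produce a cycle $x - z_1 - y - z_2 - x$ on four distinct vertices of $\Gamma$, impossible in a tree. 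Hence $\az(w) = \{v\}$ is a single vertex, and a cyclically reduced word in the single letter $v$ is a power $v^m$ with $m \neq 0$, so $w = v^m$.

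Continuing, once $\az(w)=\{v\}$ the condition of commuting with $w = v^m$ is just adjacency to $v$, so $\BA(w) = \langle N(v)\rangle$, where $N(v)$ is the set of neighbours of $v$. In a tree distinct neighbours of $v$ are pairwise non-adjacent, so $\BA(w)$ is free on $N(v)$, hence non-abelian exactly when $\deg(v) \ge 2$, i.e. when $v$ is not a leaf. This proves the forward implication and at the same time identifies the structure $C(w) = \langle \sqrt{w}\rangle \times \BA(w) = \langle v\rangle \times F_{\deg v}$. For the converse, if $w$ is conjugate to $v^m$ with $v$ a non-leaf, then using $\sqrt{v^m}=v$ and Corollary \ref{cor:centr} one gets $C(v^m)=C(v)$, which by Corollary \ref{cor:123} equals $\BZ \times F_{\deg v}$ with $\deg v \ge 2$, hence is non-abelian; conjugating back yields $C(w) \cong \BZ \times F$ of the stated form.

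I expect the main obstacle to be the bookkeeping around roots and conjugation, namely checking that the factor $\langle \sqrt{w}\rangle$ in the final decomposition really matches $\langle v\rangle$ (via $\sqrt{v^m}=v$ and Corollary \ref{cor:centr}) and that the reduction to cyclically reduced elements preserves every clause of the statement. The geometric content, by contrast, is light: it is concentrated in the single no-$4$-cycle observation forcing $|\az(w)| = 1$.
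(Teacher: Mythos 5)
Your proof is correct and follows essentially the same route as the paper's: reduce to a cyclically reduced representative, use Theorem \ref{thm:centr} to see that non-abelianity of the centraliser forces $\BA(w_0)$ to be non-abelian, deduce from the tree structure that $\az(w_0)$ is a single non-leaf vertex, and settle the converse via Corollaries \ref{cor:centr} and \ref{cor:123}. The only difference is expository: you spell out the no-$4$-cycle argument (and the $w=1$ degeneracy) that the paper compresses into ``since $\Gamma$ is a tree \ldots\ by definition of $\BA(w_0)$.''
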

	\begin{proof}
		One implication follows immediately from Corollaries \ref{cor:centr} and \ref{cor:123}. For the other implication, suppose that $w$ has non-abelian centralizer, and let $w=w_0^g$, where $w_0$ is cyclically reduced. Since $C(w_0)$ is non-abelian, it follows from Theorem \ref{thm:centr} that $\BA(w_0)$ is not cyclic, and since $\Gamma$ is a tree this implies that $w_0$ is a power of some non-leaf generator, by definition of $\BA(w_0)$.
	\end{proof}
	
	\begin{corollary} \label{cor:cent}
			Centralizers of elements in RAAGs are RAAGs themselves.
	\end{corollary}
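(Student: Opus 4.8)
The plan is to read off the structure of the centraliser directly from the Centraliser Theorem (Theorem \ref{thm:centr}) and observe that every factor appearing there is a RAAG, and that a finite direct product of RAAGs is again a RAAG.

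First I would reduce to the cyclically reduced case. Every element $w\in \GG$ is conjugate to a cyclically reduced element $w_0$, say $w=w_0^g$, and then $C(w)=C(w_0)^g$. Since conjugate subgroups are isomorphic, it suffices to prove that $C(w_0)$ is a RAAG. Next I would apply Theorem \ref{thm:centr} to the block decomposition $w_0=v_1\cdots v_k$, which gives
$$
C(w_0)=\langle \sqrt{v_1}\rangle \times \cdots \times \langle \sqrt{v_k}\rangle \times \BA(w_0).
$$
Here each cyclic factor $\langle \sqrt{v_i}\rangle$ is infinite cyclic, because RAAGs are torsion-free, and so it is the RAAG defined by a single vertex.

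The key observation is then the identification of $\BA(w_0)$ as a RAAG. By definition $\BA(w_0)$ is generated by those canonical generators in $X=V(\Gamma)$ that do not occur in $w_0$ and commute with it; in particular it is generated by a \emph{subset} of the canonical generators. By the standard fact recalled in Section \ref{sec:prelim} that a subgroup of $\GG(\Gamma)$ generated by a set of vertices is precisely the RAAG defined by the induced full subgraph, it follows that $\BA(w_0)$ is itself a RAAG.

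Finally I would invoke the elementary fact that a direct product of finitely many RAAGs $\GG(\Gamma_1),\dots,\GG(\Gamma_m)$ is again a RAAG, namely $\GG(\Gamma_1 * \cdots * \Gamma_m)$, where $*$ denotes the graph join (one adds all edges between vertices lying in distinct factors); this is immediate from the defining presentation, since taking a join exactly imposes commutation between the factors. Applying this to the factorisation of $C(w_0)$ above shows that $C(w_0)$, and hence $C(w)$, is a RAAG. The only step requiring any care is the identification of $\BA(w_0)$ as a RAAG, and once one notices that it is generated by a subset of the canonical generators this is automatic; I expect no serious obstacle.
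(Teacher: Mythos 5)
Your proof is correct and follows essentially the same route as the paper: the paper's proof simply states that the corollary follows immediately from Theorem \ref{thm:centr} together with the fact that $\BA(w)$ is a RAAG by definition (being generated by a subset of the canonical generators, hence the RAAG on the induced full subgraph). Your write-up merely makes explicit the details the paper treats as immediate, namely the reduction to the cyclically reduced case, the torsion-freeness giving infinite cyclic factors, and the closure of RAAGs under direct products via graph joins.
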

	\vspace{-0.3 cm}
	\begin{proof}
		Follows immediately from Theorem \ref{thm:centr} and the fact that $\BA(w)$ is a RAAG by definition.
	\end{proof}
	Note that in general RAAGs contain lots of subgroups which are not RAAGs themselves, even of finite index.
	
\section{Necessary conditions for commensurability}
	
\subsection{(Reduced) Extension graph and centraliser splitting}

In this section we recall the notions of the (reduced) extension graph and the (reduced) centraliser splitting. In the particular case when the underlying graph $\Delta$ is a tree, so is the (reduced) extension graph, see \cite{KK}. The goal of this section is to show that there exists an equivariant isomorphism between the (reduced) extension graph and the Bass-Serre tree of the (reduced) centraliser splitting of $\GG(\Delta)$.
	
\begin{defn}[Extension graph, see \cite{KK}]
Let $\GG(\Delta)$ be a RAAG with underlying commutation graph $\Delta$, then the {\it extension graph} $\Delta^e$ is defined as follows. The vertex set of $\Delta^e$ is the set of all elements of $\mathbb{G}(\Delta)$ which are conjugate to the canonical generators (vertices of $\Delta$). Two vertices are joined by an edge if and only if the corresponding group elements commute. The group $\mathbb{G}(\Delta)$ acts on $\Delta^e$ by conjugation.
\end{defn}
	
In some sense, the extension graph encodes the structure of the intersection pattern of flats inside the universal cover of the Salvetti complex. It plays an essential role in establishing the quasi-isometric rigidity of the class of RAAGs with finite outer automorphism groups, see \cite{Huang}.
	
	\medskip

Observe that RAAGs split as fundamental groups of graph of groups, whose vertex groups are centralisers of vertex generators. In this paper we will work with the centraliser splitting defined as follows.
\begin{defn}[(Reduced) Centraliser splitting]
Let $\Delta$ be a tree and let $\GG(\Delta)$ be the RAAG with underlying graph $\Delta$. The {\it centraliser splitting} of $\GG(\Delta)$ is a graph of groups defined as follows. The graph of the splitting is isomorphic to $\Delta$ and the vertex group at every vertex is defined to be the centralizer of the corresponding vertex generator. Note that if $v$ is some vertex of $\Delta$, and $u_1, \ldots, u_s$ are all vertices of $\Delta$ adjacent to $v$, then $C(v)=\langle v,u_1,\ldots,u_s \rangle \cong \mathbb{Z} \times F_s$, where $F_s$ is the free group of rank $s$, see Corollary \ref{cor:123}. In particular, $C(v)$ is abelian if and only if $v$ has degree 1, and in this case $C(v) \cong \mathbb{Z}^2$ is contained in the centralizer of the vertex adjacent to $v$. For an edge $e$ connecting vertices $u$ and $v$ the edge group at $e$ is $C(u) \cap C(v)=\langle u,v \rangle  \cong \mathbb{Z}^2$.

Note that the centralizer splitting is not reduced, since for every vertex of degree 1 in $\Delta$ the vertex group is equal to the incident edge group. Thus it makes sense to consider the {\it reduced centraliser splitting} of $\mathbb{G}(\Delta)$ (for a tree $\Delta$), which is obtained from the centralizer splitting by removing all vertices of degree 1. In this splitting all the vertex groups are non-abelian, and all the edge groups are isomorphic to $\mathbb{Z}^2$, in particular, this splitting is already reduced.
\end{defn}	

As we show in Lemma \ref{bs} below, just as the centraliser splitting corresponds to the extension graph, the reduced centraliser splitting corresponds to the reduced  extension graph, which we now define.

\begin{defn}[Reduced extension graph]
For a tree $\Delta$, we define the {\it reduced extension graph} of $\Delta$, and denote it by $\widetilde{\Delta}^e$,  to be the full subgraph of the extension graph $\Delta^e$, whose vertex set is the set of all elements of $\mathbb{G}(\Delta)$ which are conjugate to the canonical generators corresponding to vertices of $\Delta$ of degree more than 1 (which are exactly those which have non-abelian centralizers).
\end{defn}	
	
The reduced extension graph will play an essential role in the classification of RAAGs up to commensurability. If $H< \GG(\Delta)$ is a subgroup of finite index, then $H$ intersects each cyclic subgroup associated to the vertex groups of the reduced extension graph in a non-trivial cyclic subgroup, that is $H \cap \langle a_i^g\rangle  = \langle {(a_i^{k_i})}^g\rangle$ for some $k_i \in \BN$, $g\in \GG$. On the other hand, in the case of trees, by the description of centralisers in RAAGs, every element in $H$ whose centraliser is non-abelian belongs to some cyclic subgroup $\langle {(a_i^{k_i})}^g\rangle = H \cap \langle a_i^g\rangle$, see Corollary \ref{cor:centraliserstrees}. Since the set of elements with non-abelian centralisers is an invariant set up to isomorphism, it follows that the reduced extension graph is an algebraic invariant in the class of finite index subgroups of $\GG(\Delta)$, that is if $K \simeq H$ and $H<_{fi} \GG(\Delta)$, then the graph $T(K)$ whose vertex set is in one-to-one correspondence with maximal cyclic subgroups generated by elements of $K$ with non-abelian centraliser and there is an edge $(u,v)$ whenever the corresponding elements commute, is isomorphic to the reduced extension graph of $\GG(\Delta)$ (see Lemma \ref{l2}).

From this observation, one can deduce that many classes of RAAGs are not commensurable (without using the stronger fact that they are not quasi-isometric). For instance, RAAGs whose defining graphs are trees are not commensurable to RAAGs whose defining graph have cycles; or RAAGs defined by cycles of different lengths are not commensurable, etc. At this point, a couple of remarks are in order. Firstly, the above observation does not extend to the extension graph, that is, commensurable RAAGs may not have isomorphic extension graphs. For instance, we will show that the RAAGs defined by paths of length 3 and 4 are commensurable but their extension graphs are not isomorphic (leaves in the tree are also leaves in the extension graphs and the minimal distances in the extension graphs between leaves is 3 and 4 respectively). The reason here is that there are elements with abelian centralisers that are \emph{not} powers of conjugates of the canonical generators so they are not accounted for in the extension graph. Secondly, the isomorphism of the reduced extension graphs is a necessary condition but it is by far not sufficient. All RAAGs defined by trees of diameter 4 have isomorphic reduced extension graphs but there are infinitely many different commensurability classes among them.
	
	\bigskip

	In the next lemmas we notice that the two trees, the (reduced) extension graph and the Bass-Serre tree associated to the (reduced) centraliser splitting are equivalent and that the reduced extension graph and its quotient by the action of $H$ is invariant up to isomorphism.

	\begin{lemma}\label{bs}
		If $\Delta$ is a tree, then the extension graph $\Delta^e$ is also a tree, which is isomorphic to the Bass-Serre tree $T$ corresponding to the centralizer splitting of $\mathbb{G}(\Delta)$.
		Moreover, the reduced extension graph $\widetilde{\Delta}^e$ is a subtree of $\Delta^e$, which is isomorphic to the Bass-Serre tree $\widetilde{T}$ corresponding to the reduced centralizer splitting of $\mathbb{G}(\Delta)$. The graph isomorphisms above are equivariant, in the sense that the action of $\mathbb{G}(\Delta)$ by conjugation on $\Delta^e$ {\rm(}or $\widetilde{\Delta}^e${\rm)} corresponds to the natural action of $\mathbb{G}(\Delta)$ on the Bass-Serre tree of the centralizer splitting {\rm(}reduced centralizer splitting, respectively{\rm)}.
	\end{lemma}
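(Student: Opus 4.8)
The plan is to construct an explicit $\GG(\Delta)$-equivariant graph isomorphism $\phi\colon \Delta^e \to T$ and deduce everything from it; since the Bass--Serre tree $T$ is automatically a tree, the fact that $\Delta^e$ is a tree will then come for free. First I would define $\phi$ on vertices by sending a conjugate $a^g$ of a canonical generator $a$ to the coset $g\,C(a)\in \GG/C(a)\subseteq V(T)$. To see this is well defined and injective, note that two canonical generators are conjugate only if equal, since their images under abelianisation $\GG\to\BZ^{V(\Delta)}$ are distinct basis vectors; so the ``type'' $a$ is recovered from $a^g$, and $a^g=a^h$ holds precisely when $g^{-1}h\in C(a)$, i.e. when $gC(a)=hC(a)$. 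As $g$ ranges over $\GG$ and $a$ over the generators, the cosets $gC(a)$ range over all of $V(T)=\bigsqcup_a \GG/C(a)$, so $\phi$ is a bijection on vertices. Equivariance is then immediate: conjugation sends $a^g\mapsto a^{xg}$, matching left multiplication $gC(a)\mapsto xgC(a)$.

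The heart of the argument is the edge correspondence: for generators $a,b$, the vertices $a^g,b^h$ commute (are joined in $\Delta^e$) if and only if $(a,b)\in E(\Delta)$ and $gC(a)\cap hC(b)\neq\emptyset$, which is exactly the condition for $gC(a),hC(b)$ to be joined in $T$. The backward direction is a direct computation: if $k\in gC(a)\cap hC(b)$ then $a^g=a^k$ and $b^h=b^k$, and since $a,b$ commute so do their common conjugates $a^k,b^k$. For the forward direction I would first treat the degenerate case $a=b$ separately, proving that two \emph{distinct} conjugates of the same generator never commute: after conjugating, this reduces to showing that if $a^c\in C(a)=\langle a\rangle\times\langle u_1,\dots,u_s\rangle$ is conjugate to $a$ then $a^c=a$, which follows by writing $a^c=a\,w$ with $w$ in the free part, cyclically reducing, and comparing lengths of cyclically reduced conjugates of $a$ (using Theorem \ref{thm:centr} and Corollary \ref{cor:123} for the structure of $C(a)$). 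For $a\neq b$, commutation gives $b^{c}\in C(a)$ with $c=g^{-1}h$; comparing abelianisations forces $b^{c}$ to lie in the free part $\langle u_1,\dots,u_s\rangle$ and forces $b$ to be one of the $u_i$, so that $(a,b)\in E(\Delta)$.

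The hard part will be upgrading this to the coset-intersection condition $c\in C(a)C(b)$. For this I would prove the auxiliary fact that an element of the parabolic subgroup $\langle u_1,\dots,u_s\rangle$ which is conjugate in $\GG$ to the generator $b$ is already conjugate to $b$ \emph{inside} that subgroup; this uses that the $u_i$ are pairwise non-adjacent in the tree $\Delta$ (so the subgroup is free and its cyclically reduced elements remain cyclically reduced in $\GG$), together with uniqueness of cyclically reduced forms up to the commutation moves. Granting this, $b^c=b^f$ for some $f\in\langle u_1,\dots,u_s\rangle\subseteq C(a)$, whence $c\in f\,C(b)\subseteq C(a)C(b)$, which is exactly $gC(a)\cap hC(b)\neq\emptyset$. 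Combining the vertex bijection, the edge correspondence and equivariance shows $\phi$ is an isomorphism of $\GG$-graphs $\Delta^e\cong T$; as $T$ is a Bass--Serre tree, $\Delta^e$ is a tree.

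Finally, for the reduced statement I would simply restrict $\phi$. The reduced extension graph $\widetilde{\Delta}^e$ is the full subgraph of $\Delta^e$ spanned by conjugates of the non-leaf generators, and these are exactly the vertices sent by $\phi$ into $\bigsqcup_{v\text{ non-leaf}}\GG/C(v)=V(\widetilde T)$; by the edge correspondence an edge between two such vertices corresponds to an edge $(u,v)$ of $\Delta$ with both endpoints non-leaf, that is, to an edge of the reduced splitting. Hence $\phi$ restricts to an equivariant isomorphism $\widetilde{\Delta}^e\cong\widetilde T$; since $\widetilde T$ is a tree, $\widetilde{\Delta}^e$ is connected, and being a full subgraph of the tree $\Delta^e$ it is a subtree. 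I expect the only genuine obstacle to be the parabolic-conjugacy fact above (and the companion claim that distinct conjugates of a single generator do not commute); the remainder is bookkeeping with the Bass--Serre description of $T$ and the centraliser description from Theorem \ref{thm:centr} and Corollary \ref{cor:123}.
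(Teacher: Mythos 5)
Your proposal is correct and takes essentially the same route as the paper: the same map $a^g \mapsto gC(a)$, the same vertex bijection and equivariance check, the same characterisation of adjacency in the Bass--Serre tree via nonempty coset intersections $gC(a)\cap hC(b)\neq\emptyset$, and the same restriction argument for the reduced objects. The only difference is one of detail: the paper asserts the key equivalence (that $a^g$ and $b^h$ commute if and only if $[a,b]=1$ and they are conjugates of $a,b$ by a common element) as an unproved chain of ``iff''s, whereas you supply the nontrivial direction yourself -- via abelianisation, cyclic reduction, and conjugacy into the parabolic subgroup $\langle u_1,\dots,u_s\rangle$ -- and your sketched arguments for those auxiliary facts are sound.
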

	From now on we denote $\GG=\mathbb{G}(\Delta)$.
	\begin{proof}
		Every vertex of $\Delta^e$ has the form $v^g$, where $v$ is some canonical generator of $\GG$, and $g \in \GG$. By Bass-Serre theory, vertices of $T$ correspond to left cosets of centralizers of canonical generators of $\GG$. Define a morphism $\beta$ from $\Delta^e$ to $T$ by sending $v^g$ to the vertex of the form $gC(v)$. This gives a bijection of the vertex sets. By definition, two vertices $v_1^{g_1}$ and $v_2^{g_2}$ of $\Delta^e$ are connected by an edge iff $[v^{g_1}_1,v^{g_2}_2]=1$ iff
		$$
		[v_1,v_2]=1 \hbox{ and } v^{g_1}_1=v_1^g, v^{g_2}_2=v_2^g \hbox{ for some } g \in \GG,
		$$
		iff
		$$
		[v_1,v_2]=1 \hbox{ and } g_1^{-1}g \in C(v_1), \: g_2^{-1}g \in C(v_2) \hbox{ for some } g \in \GG,
		$$
		iff
		$$
		[v_1,v_2]=1\hbox{ and } g_1C(v_1) \cap g_2C(v_2) \neq \emptyset.
		$$
		By Bass-Serre theory, since $\Delta$ is a tree and therefore there are no HNN-extensions appearing, two vertices $g_1C(v_1)$ and $g_2C(v_2)$ of $T$ are connected by an edge iff $[v_1,v_2]=1$ and $gC(v_1)=g_1C(v_1), \: gC(v_2)=g_2C(v_2)$ for some $g \in \GG$, iff $[v_1,v_2]=1$ and $g_1C(v_1) \cap g_2C(v_2) \neq \emptyset$. This shows that $\beta$ is indeed a graph isomorphism. Now $\beta(v^g)=gC(v)=g\beta(v)$, so $\beta$ is equivariant, and the restriction of $\beta$ to $\widetilde{\Delta}^e$ gives an equivariant isomorphism between $\widetilde{\Delta}^e$ and $\widetilde{T}$.
	\end{proof}

\subsection{Commensurability invariants: the reduced extension graph and the quotient graph}	
	
	The goal of this section is to show that the reduced extension graph and the quotient graph (defined below) are commensurability invariants. Suppose $\Delta$ is a finite tree, and $K$ is a finite index subgroup of $\mathbb{G}(\Delta)$. Then $\mathbb{G}(\Delta)$ acts on the reduced extension graph $\widetilde{\Delta}^e$, and $K$ acts on $\widetilde{\Delta}^e$ by restriction. Let $\Psi(K)$ be the quotient graph: $\Psi(K)=K \backslash  \widetilde{\Delta}^e$. The goal of this section is to show that if $\GG(\Delta)$ and $\GG(\Delta')$ are commensurable, and $\GG(\Delta)>_{fi}K\simeq K'<_{fi}\GG(\Delta')$, then the reduced extension graphs $\widetilde{\Delta}^e$ and $\widetilde{\Delta}'^e$ are isomorphic and so are the quotient graphs $\Psi(K)=K\backslash\widetilde{\Delta}^e$ and $\Psi(K')=K'\backslash\widetilde{\Delta}'^e$.

	Note that there are natural projection graph morphisms
	$$
	\gamma_K: \widetilde{\Delta}^e \rightarrow \Psi(K)
	$$
	and
	$$
	\delta_K: \Psi(K) \rightarrow \widetilde{\Delta}=\GG(\Delta)\backslash\widetilde{\Delta}^e.
	$$
	Note also that the graph $\Psi(K)$ is finite, since $K$ has finite index in $\mathbb{G}(\Delta)$.
	
	Below $C_K(w)=K \cap C_{\GG}(w)$ is defined even if $w \in \GG$ is not in $K$. In fact, since $K$ has finite index in $\GG$, there exists a natural $m$ such that $w^m \in K$, and then $C_K(w)=C_K(w^m)$.
	
	\begin{lemma}\label{l3}
		Let $v$ be a vertex of $\Psi(K)$, and $w$ be any vertex of $\widetilde{\Delta}^e$ such that $\gamma_K(w)=v$. 
		 Then
		\begin{enumerate}
			\item There is a bijection between the vertices of $\Psi(K)$ adjacent to $v$ and the equivalence classes of vertices of $\widetilde{\Delta}^e$ adjacent to $w$, modulo conjugation by $K$.
			\item There is a bijection between the edges of $\Psi(K)$ incident to $v$ and the equivalence classes of vertices of $\widetilde{\Delta}^e$ adjacent to $w$, modulo conjugation by $C_K(w)$.
		\end{enumerate}
	\end{lemma}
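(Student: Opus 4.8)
The plan is to treat both statements as instances of the standard description of the quotient of a tree by a group acting without inversions, applied to the action of $K$ on the tree $\widetilde{\Delta}^e$ (Lemma \ref{bs}); throughout, write $N(w)$ for the set of vertices of $\widetilde{\Delta}^e$ adjacent to $w$. First I would pin down the stabilisers: since $\GG$ acts by conjugation, the stabiliser of $w$ is $\{g\in\GG : gwg^{-1}=w\}=C_{\GG}(w)$ --- equivalently, under the equivariant isomorphism of Lemma \ref{bs} sending $w=v^g$ to the coset $gC(v)$, the left-multiplication stabiliser of that coset is $gC(v)g^{-1}=C(w)$ --- so the stabiliser of $w$ in $K$ is $K\cap C_{\GG}(w)=C_K(w)$. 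I would also record that $K$ acts without inversions, and more precisely that the two endpoints of any edge lie in distinct $\GG$-orbits: an edge of $\widetilde{\Delta}^e$ joins conjugates $v_1^{g_1}$ and $v_2^{g_2}$ of two distinct non-leaf generators, and distinct canonical generators of a RAAG are never conjugate (they have distinct images in the abelianisation), so no element of $K$ can send one endpoint of an edge to the other.

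Since $\widetilde{\Delta}^e$ is a tree it has no loops or multiple edges, so the edges incident to $w$ are in natural bijection with $N(w)$, and $C_K(w)$, which fixes $w$, permutes these edges exactly as it permutes $N(w)$. For part (2) I would then argue by lift-and-translate: every edge of $\Psi(K)$ incident to $v$ lifts to an edge of $\widetilde{\Delta}^e$ which, after translating by a suitable $k\in K$, may be taken incident to $w$; two such lifts map to the same edge of $\Psi(K)$ precisely when they differ by an element of $K$ fixing $w$, that is, by an element of $C_K(w)$ --- here the fact that the endpoints of an edge lie in distinct orbits rules out the possibility that $k$ swaps the two ends. Hence the edges of $\Psi(K)$ incident to $v$ biject with the $C_K(w)$-orbits of edges incident to $w$, which are exactly the $C_K(w)$-orbits of $N(w)$.

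Part (1) is the same lift-and-translate argument with the full group $K$ in place of the stabiliser. A vertex of $\Psi(K)$ adjacent to $v$ is joined to $v$ by an edge whose lift, after translating by $K$, is incident to $w$, hence equals $\gamma_K(w')$ for some $w'\in N(w)$; so $w'\mapsto\gamma_K(w')$ maps $N(w)$ onto the vertices of $\Psi(K)$ adjacent to $v$. As $\gamma_K(w')=\gamma_K(w'')$ holds exactly when $w'$ and $w''$ lie in one $K$-orbit, this descends to the asserted bijection between the classes of $N(w)$ modulo conjugation by $K$ and the vertices of $\Psi(K)$ adjacent to $v$.

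The point requiring care --- and where I expect the (modest) bookkeeping to lie --- is the interplay of the two parts: since $C_K(w)\le K$, a single $K$-orbit in $N(w)$ may split into several $C_K(w)$-orbits, and these correspond to several distinct edges of $\Psi(K)$ joining $v$ to one and the same adjacent vertex, so $\Psi(K)$ need not be simple. Concretely, the forgetful map from the edges of (2) to the vertices of (1) is the orbit projection $C_K(w)\backslash N(w)\to K\backslash N(w)$; verifying that this accounts exactly for the multiple edges, and that the whole picture is independent of the chosen lift $w$ of $v$ (different lifts being $K$-conjugate, and so yielding canonically identified data), is the main thing to check.
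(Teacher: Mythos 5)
Your proposal is correct and follows essentially the same route as the paper: part (1) comes from the definition of the quotient graph, and part (2) rests on the observation that the two endpoints of any edge of $\widetilde{\Delta}^e$ lie in distinct $\GG$-orbits (being conjugates of distinct canonical generators), so any $g\in K$ identifying two edges at $w$ must fix $w$ and hence lie in $C_K(w)$. Your version merely spells out details the paper leaves implicit (the abelianisation argument for non-conjugacy, the lift-and-translate surjectivity step, and the stabiliser computation), so there is nothing to change.
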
	
	\begin{proof}
		The first claim follows immediately from the definition of $\Psi(K)$ as a quotient graph.
		Now let $e_1$ and $e_2$ be two different edges of $\widetilde{\Delta}^e$ incident to $w$. Let $w_1$, $w_2$ be the other ends of $e_1$, $e_2$ respectively. Then $e_1$ and $e_2$ project into the same edge of $\Psi(K)$ if and only if there exists $g \in K$ which takes $e_1$ to $e_2$. Since $w$ and $w_1$ for sure belong to different orbits under the action of $K$ (even of $\GG$), this happens if and only if $g$ takes $w_1$ to $w_2$ and leaves $w$ fixed. This means that $w_1^g=w_2$ and $w^g=w$, so $g \in C_K(w)$, thus the second claim also holds.
	\end{proof}	
	In particular, graph $\Psi(K)$ can have multiple edges and cycles (but not loops).	
	
	Suppose now $\Delta$ and $\Delta'$ are finite trees, and denote $\GG=\mathbb{G}(\Delta)$ and $\GG'=\mathbb{G}(\Delta')$. Suppose $\GG$ and $\GG'$ are commensurable.
	Thus there exist finite index subgroups $H \le \GG$ and $H' \le \GG'$ and an isomorphism $\varphi: H \rightarrow H'$.
	
	Consider the reduced extension graphs $\widetilde{\Delta}^e$ and $\widetilde{\Delta}'^e$ defined above, with the actions by conjugation of $\GG$ and $\GG'$ respectively.
	The finite graphs $\Psi(H)$ and $\Psi(H')$ are defined as above.
	
	\begin{lemma}\label{l2}
		The group isomorphism $\varphi: H \rightarrow H'$ induces graph isomorphisms $\overline{\varphi}: \widetilde{\Delta}^e \rightarrow \widetilde{\Delta}'^e$ and $\varphi_*: \Psi(H) \rightarrow \Psi(H')$.
	\end{lemma}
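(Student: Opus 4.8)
The plan is to show that the reduced extension graph $\widetilde{\Delta}^e$ can be \emph{reconstructed intrinsically} from the abstract group $H$, independently of the embedding $H<_{fi}\GG$, following the observation recorded just before the lemma. Concretely, I would attach to $H$ an abstract graph $T(H)$ whose vertices are the maximal cyclic subgroups of $H$ generated by an element with non-abelian centraliser (in $H$), with an edge between two such subgroups whenever they commute elementwise. The whole lemma then reduces to two statements: first, that there is an $H$-equivariant graph isomorphism $\widetilde{\Delta}^e\cong T(H)$ (and likewise $\widetilde{\Delta}'^e\cong T(H')$); and second, that a group isomorphism $\varphi\colon H\to H'$ manifestly induces a graph isomorphism $T(H)\to T(H')$, since it preserves centralisers, the property of having non-abelian centraliser, maximal cyclic subgroups, and commutation. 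Composing $\widetilde{\Delta}^e\cong T(H)\xrightarrow{\varphi}T(H')\cong\widetilde{\Delta}'^e$ then produces $\overline{\varphi}$.

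To build the isomorphism $\widetilde{\Delta}^e\cong T(H)$ I would send a vertex $a^g$ of $\widetilde{\Delta}^e$ (with $a$ a non-leaf generator) to the subgroup $H\cap\langle a^g\rangle$. Since $H$ has finite index, this intersection is a nontrivial cyclic subgroup $\langle (a^{k})^g\rangle$; because $a^g$ is a least root and $C((a^g)^k)=C(a^g)$ by Corollary \ref{cor:centr}, one checks it is in fact \emph{maximal} cyclic in $H$, and by Corollary \ref{cor:centraliserstrees} its generator has non-abelian centraliser. I must then check that this map is a bijection onto the vertex set of $T(H)$: surjectivity follows because any $u\in H$ with non-abelian centraliser is, by Corollary \ref{cor:centraliserstrees}, a power of a conjugate $a^g$ of a non-leaf generator, so $\langle u\rangle\le H\cap\langle a^g\rangle$ and maximality of $\langle u\rangle$ forces equality; injectivity follows since distinct vertices of $\widetilde{\Delta}^e$ give distinct maximal cyclic subgroups of $\GG$, hence distinct intersections with $H$. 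For edges, if $a^g$ and $b^h$ commute then so do their powers, giving an edge in $T(H)$; conversely, if suitable powers $(a^g)^s$ and $(b^h)^t$ commute, then $(b^h)^t\in C((a^g)^s)=C(a^g)$ and hence $a^g\in C((b^h)^t)=C(b^h)$, so $a^g$ and $b^h$ commute. A small but essential point is that, for $u\in H$, $C_H(u)=H\cap C_{\GG}(u)$ is non-abelian precisely when $C_{\GG}(u)$ is, because a finite-index subgroup of $\BZ\times F$ (with $F$ non-abelian free) is again non-abelian; this is what makes $T(H)$ genuinely intrinsic to $H$.

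Finally, for $\varphi_*$ I would verify that the isomorphism $\widetilde{\Delta}^e\cong T(H)$ is $H$-equivariant: conjugation by $h\in H$ sends the vertex $a^g$ to $a^{hg}$ and the subgroup $H\cap\langle a^g\rangle$ to $H\cap\langle a^{hg}\rangle=h(H\cap\langle a^g\rangle)h^{-1}$, so the two $H$-actions match. Because $\varphi$ is a homomorphism, $\varphi(hMh^{-1})=\varphi(h)\varphi(M)\varphi(h)^{-1}$, so the induced isomorphism $T(H)\to T(H')$ intertwines the $H$-action with the $H'$-action via $\varphi$. Hence $\overline{\varphi}$ is equivariant and descends to a well-defined graph isomorphism on the quotients $\Psi(H)=H\backslash\widetilde{\Delta}^e\to H'\backslash\widetilde{\Delta}'^e=\Psi(H')$; bijectivity on vertices and edges of the quotient can be read off directly, or via the orbit descriptions of Lemma \ref{l3}. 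The main obstacle is the first half, namely the intrinsic reconstruction $\widetilde{\Delta}^e\cong T(H)$, and within it the most delicate points are the intrinsic characterisation of ``non-abelian centraliser'' inside $H$ and the backward direction of the edge correspondence, both of which rest on the centraliser description in Theorem \ref{thm:centr} and Corollaries \ref{cor:centr}--\ref{cor:centraliserstrees}.
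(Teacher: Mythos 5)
Your proof is correct, and it rests on the same three pillars as the paper's argument: the classification of elements with non-abelian centralisers in tree RAAGs (Corollary \ref{cor:centraliserstrees}), uniqueness of least roots, and $C(w)=C(\sqrt{w})$ (Corollary \ref{cor:centr}). The difference is architectural. The paper constructs $\overline{\varphi}$ directly: for a vertex $w$ it takes the minimal $k$ with $w^k\in H$, observes that $\varphi(w^k)$ has non-abelian centraliser in $H'$ and hence equals $(u_0^l)^{g_0}$ for a non-leaf generator $u_0$ of $\GG'$, and sets $\overline{\varphi}(w)=u_0^{g_0}$; well-definedness, edge preservation, and the existence of an inverse (``constructed in the same way'') are then checked by hand. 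You instead factor through the intrinsic graph $T(H)$ --- precisely the object the paper describes in prose just before the lemma but never formally uses in its proof --- via the vertex map $a^g\mapsto H\cap\langle a^g\rangle$, and let $\varphi$ act on $T(H)$ tautologically. The composite $\widetilde{\Delta}^e\cong T(H)\rightarrow T(H')\cong\widetilde{\Delta}'^e$ is in fact literally the paper's map (it sends $w$ to the least root of $\varphi(w^k)$), but your packaging buys bijectivity and equivariance of the induced map essentially for free, at the cost of having to verify that $H\cap\langle a^g\rangle$ is maximal cyclic in $H$, that the vertex map is injective and surjective, and that ``non-abelian centraliser'' is intrinsic to $H$ (your finite-index observation) --- all of which you do, correctly, with the same tools the paper uses. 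One small point worth writing out in full: your injectivity claim (``distinct maximal cyclic subgroups of $\GG$, hence distinct intersections with $H$'') silently uses that distinct maximal cyclic subgroups of $\GG$ intersect trivially, which again is the unique-roots argument --- if $H\cap\langle a^g\rangle=H\cap\langle b^h\rangle$ then a common nontrivial element has least root equal to both $a^g$ and $b^h$ (up to inversion, which is excluded by looking at the abelianisation), forcing $a^g=b^h$.
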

	\begin{proof} Let $u_1, \ldots, u_k$ be all vertices of $\Delta$ which have degree more than 1, and $u_1', \ldots, u_{k'}'$ be all vertices of $\Delta'$ which have degree more than 1. Then  $u_1, \ldots, u_k$ are all canonical generators of $\GG$ with non-abelian centralizers, and $u_1', \ldots, u_{k'}'$ are all canonical generators of $\GG'$ with non-abelian centralizers.
	
		Notice that the only elements of $\GG$ which have non-abelian centralizers in $\GG$ are conjugates of powers of $u_1, \ldots, u_k$ by some element of $\GG$. Since $H$ has finite index in $\GG$ and so $C_H(x)=H \cap C_\GG(x)$ has finite index in $C_\GG(x)$ for any $x \in \GG$. The only elements of $H$ which have non-abelian centralizers in $H$ are conjugates of powers of $u_1, \ldots, u_k$ by some element of $\GG$ which belong to $H$, that is elements from the set $M_H=\{(u_i^{k_i})^{g_i}\in H\}$, where $g\in \GG$, $k_i\in \BZ$. Analogously, the only elements of $H'$ which have non-abelian centralizers in $H'$ are conjugates of powers of $u_1', \ldots, u_{k'}'$ by some element of $\GG'$ which belong to $H'$, that is elements from the set $M_{H'}=\{({u_i'}^{k_i'})^{g_i'}\in H'\}$, where $g\in \GG'$, $k_i'\in \BZ$. Thus, the isomorphism $\varphi$ should take the set $M_H$ to the set $M_{H'}$.
		
		Now define a morphism of trees $\overline{\varphi}: \widetilde{\Delta}^e \rightarrow\widetilde{\Delta}'^e$ as follows. First define $\overline{\varphi}$ on the vertex set of $\widetilde{\Delta}^e$. Let $w$ be a vertex of $\widetilde{\Delta}^e$. Then $w=u^g$ for some $g \in \GG$, where $u$ is one of $u_1, \ldots, u_k$. Since $H$ has finite index in $\GG$, there exists a minimal positive integer $k$ such that $w^k \in H$. Then $w^k$ has non-abelian centralizer in $H$, so $\varphi(w^k)$ also has non-abelian centralizer in $H'$, thus $\varphi(w^k)=(u_0^l)^{g_0} \in H'$ for some positive integer $l$, $g_0 \in \GG'$, and $u_0$ equal to one of $u_1', \ldots, u_{k'}'$. Denote $w_0=u_0^{g_0}$.  Then let $\overline{\varphi}(w)=w_0$. Note that here $l$ is also equal to the minimal positive integer $i$ such that $w_0^i \in H'$, since if $i < l$, then also $i | l$, so $w_0^l$ is a proper power in $H'$, but this is impossible, since $w^k$ is not a proper power in $H$, and  $w_0^l=\varphi(w^k)$.
		
		We can extend $\overline{\varphi}$ to the edges in a natural way. Indeed, by description of centralisers in RAAGs, see Theorem \ref{thm:centr}, two vertices $w_1, w_2$ of $\widetilde{\Delta}^e$ are adjacent iff
		$$
		[w_1,w_2]=1 \Leftrightarrow [w_1^{k_1},w_2^{k_2}]=1
		$$
		(take positive integers $k_1, k_2$ such that $w_1^{k_1},w_2^{k_2} \in H$), iff
		$$
		[\varphi(w_1^{k_1}),\varphi(w_2^{k_2})]=1\Leftrightarrow [\overline{\varphi}(w_1),\overline{\varphi}(w_2)]=1,
		$$
		iff $\overline{\varphi}(w_1)$ and $\overline{\varphi}(w_2)$ are adjacent. This shows that $\overline{\varphi}$ is a well-defined morphism of trees. Moreover, $\overline{\varphi}$ is in fact an isomorphism of trees, since the inverse morphism can be constructed in the same way.

		If $w$ is a vertex of $\widetilde{\Delta}^e$, and $h \in H$, then we have that $\overline{\varphi}(w^h)=\overline{\varphi}(w)^{\varphi(h)}$. It follows that we can restrict $\overline{\varphi}$ to the graph isomorphism
		$$
		\varphi_*: \Psi(H)= H \backslash  \widetilde{\Delta}^e \rightarrow \Psi(H')=  H' \backslash  \widetilde{\Delta}^e, \quad \varphi_*(\gamma_H(z))=\gamma_{H'}(\overline{\varphi}(z)),
		$$
		where $z$ is a vertex or an edge of $\widetilde{\Delta}^e$, and $\gamma_H$, $\gamma_{H'}$ are the orbit projections as above. This proves the lemma.
	\end{proof}
	
	\subsection{Commensurability invariant: linear relations between minimal exponents}\label{sec:lineareq}
	
	So far we have observed that isomorphisms leave the set of powers of conjugates of canonical generators with non-abelian centralisers invariant. However, if an isomorphism $\varphi$ sends ${(a^k)}^g$  to ${(a'^{k'})}^{g'}$, then a priori there is no relation between the integer numbers $k$ and $k'$ corresponding to the powers.
	
	\begin{expl}
	 Let $H, K<_{fi} \BZ \times F_2 \simeq \langle c \rangle \times \langle a,b \rangle$, $H= \langle c^k,a,b \rangle$ and $K=\langle c^{k'}, a,b\rangle$. Clearly, $\varphi:H \to K$ that maps $c^k \to c^{k'}$, $a\to a$ and $b\to b$ is an isomorphism and $k$ and $k'$ can be taken arbitrarily.
	
	On the other hand, if we consider $H=\langle c^k, a^2,b,a^b\rangle$, $K=\langle c^{k'}\rangle \times F_m <_{fi}\langle c \rangle \times \langle a,b \rangle$ and we \emph{assume} that the isomorphism $\varphi:H\to K$ sends $c^k$ to $c^{k'}$ and each power of a conjugate of either $a$ or $b$ to a power of a conjugate of $a$ and $b$, then we do get a constraint on the possible powers of the image of $a$. Indeed, the isomorphism $\varphi$ induces an isomorphism $\varphi'$ from the subgroup $\langle a^2,b,a^b\rangle$ to $F_m$, hence $m=3$ and the index of $F_3 < F_2$ is $2$. Since among the conjugates of powers of generators in $H$ there are $3$ of minimal exponent, by our assumption, there are also $3$ in $K$. Furthermore, since the sum of the minimal exponents of conjugates of a fixed generator is equal to the index, which in our case is $2$, and since, upto relabeling, there are only two covers of $F_2$ of degree $2$ and only one of them has $3$ conjugates of powers of generators of minimal exponent, it follows that either $a$ is sent to a conjugate of a generator (and there are exactly two conjugates of this generator) or it is sent to a square of a generator (and it is the only conjugate of minimal exponent of this generator in the subgroup).
 \end{expl}

	Our next goal is to formalise and generalise these ideas in order to find linear relations between the minimal exponents of the different (conjugacy) classes of powers of generators that belong to the subgroups $H$ and $H'$ correspondingly.
	
	The strategy is as follows. We already established that the isomorphism between finite index subgroups $H \le \GG$ and $H' \le \GG'$ induces an isomorphism between non-abelian centralisers in $H$ and $H'$: $C_H(w)\simeq C_{H'}(w')$, where $w$ and $w'$ are conjugates of generators of $\GG$ and $\GG'$ correspondingly. Since $H$ and $H'$ are of finite index in $\GG$ and $\GG'$, so are $C_H(w)$ and $C_{H'}(w')$ in the centralisers $C_1=C_{\GG}(w)$ and $C_2=C_{\GG'}(w')$ respectively. Note that $C_1$ and $C_2$ are of the form $\hbox{centre}\times \hbox{free group}$. Hence, the isomorphism between $C_H(w)$ and $C_{H'}(w')$ induces an isomorphism between the images $P_{H,w}$ and $P_{H',w'}$ of $C_H(w)$ and $C_{H'}(w')$ in the free groups obtained from $C_1$ and $C_2$ by killing their centres, see Lemma \ref{l4-1}.
	
	In general, for a conjugate $v=x^g$ of a canonical generator $x$ in $\GG$, such that $v \in C_{\GG}(w)$, the minimal exponent $k$ such that $v^{k}$ belongs to $C_H(w)$ does not coincide with the minimal exponent $l$ such that $\pi(v)^{l}$ belongs to $P_{H,w}$, where $\pi$ is the quotient by the center of $C_{\GG}(w)$ (i.e., by $\langle w \rangle$) homomorphism, see Example \ref{expl:minexp}. We refer to such $l$ as the minimal quotient exponent; it is equal to the minimal positive number $m$ such that $v^mw^s \in C_H(w)$ for some $s$, see Definition \ref{defn:minexps}.
	
	Since the ranks of the isomorphic subgroups $P_{H,w}$ and $P_{H',w'}$ of free groups are the same, the Schreier formula relates the corresponding indexes of $P_{H,w}$ and $P_{H',w'}$. We show in Lemma \ref{l4}, that the index of $P_{H,w}$ coincides with the sum of the minimal quotient exponents of conjugates of a given generator that belong to $P_{H,w}$.
	
	The goal of Section \ref{sec:minqexp} is to describe a linear relation between the minimal quotient exponents of conjugates of generators that belong to $P_{H,w}$ and $P_{H',w'}$ correspondingly, see Corollary \ref{cor:relminqexp}.
	
	In Section \ref{sec:minexp} we establish a relation between minimal exponents and minimal quotient exponents, see Lemma \ref{prop1}, and deduce a linear relation between the minimal exponents of the conjugates of generators that belong to $H$ and $H'$ correspondingly, see Lemma \ref{together}.
	
	\subsubsection{Linear relations between minimal quotient exponents} \label{sec:minqexp}

	Fix a finite index subgroup $K \le \GG$. We now encode the minimal exponent as a label of a vertex in the reduced extension graph $\widetilde{\Delta}^e$, and then of $\Psi(K)$.
	
	\begin{defn}[Label of a vertex/edge (minimal exponent/quotient exponent)] \ \label{defn:minexps}
	Let $w$ be a vertex of $\widetilde{\Delta}^e$, thus $w$ is also an element of $\GG$. Define the {\it label of the vertex $w$}, denoted by $\overline{L}(w)$, to be the minimal positive integer $k$ such that $w^k \in K$. Such number exists, since $K$ has finite index in $\GG$.
	
	Similarly, we encode the minimal quotient exponent as a label of an edge of reduced extension graph $\widetilde{\Delta}^e$  (and so of $\Psi(K)$) as follows. For an edge $f$ of $\widetilde{\Delta}^e$ connecting vertices $w_1$ and $w_2$ define the {\it label of the edge $f$ at the vertex $w_1$}, denoted by $\overline{l}_{w_1}(f)$, to be the minimal positive integer $k$ such that there exists an integer $l$ such that $w_1^k w_2^l \in K$. Note that we can always suppose $l$ is non-negative. Analogously the label of $f$ at $w_2$ is defined. Note that by definition $\overline L(w_1) \geq \overline{l}_{w_1}(f)$, for all edges $f$.
	\end{defn}
	
	Note that the labels of vertices and edges are invariant under the action of $K$ on $\widetilde{\Delta}^e$ (by conjugation). Indeed, for $h \in K$ $w^k \in K$ iff $(w^h)^k \in K$, and $w_1^k w_2^l \in K$ iff $(w_1^h)^k (w_2^h)^l \in K$.
	
	This means that we can define labels for the quotient graph $\Psi(K)$ as well. If $v$ is a vertex of $\Psi(K)$, then define the {\it label of the vertex $v$}, denoted by $L(v)$, to be the label $\overline{L}(w)$, where $w$ is some vertex of $\widetilde{\Delta}^e$ such that $\gamma_K(w)=v$. Analogously, if $p$ is an edge of $\Psi(K)$ connecting vertices $v_1$ and $v_2$, then define the {\it label of the edge $p$ at the vertex $v_1$}, denoted by $l_{v_1}(p)$, to be the label $\overline{l}_{w_1}(f)$, where $f$ is some edge of   $\widetilde{\Delta}^e$ such that $\gamma_K(f)=p$, and $w_1$ is the end of $p$ such that $\gamma_K(w_1)=v_1$. These labels are well-defined. Note that the labels of vertices and edges of $\Psi(K)$ are positive integers.
	
	\begin{expl}[Minimal exponent vs minimal quotient exponent]\label{expl:minexp}
	Consider the free group $F(a,b)$ of rank $2$ and consider an index $4$ subgroup $F_5=\langle b, b^a, b^{a^2}, b^{a^3}, a^4\rangle<F(a,b)$.

	Let $\GG=F(a,b)\times \langle c\rangle$ and let $K=\langle F_5, a^2c, c^2\rangle$ be a finite index subgroup of $\GG$. An easy computation shows that the minimal exponent of $a$ equals $4$,
	$$
	\min \{n\in \BN\mid a^n \in C_K(c^2)=K\}=4.
	$$
	Let $\pi:\GG\to \GG/\langle c\rangle$ be a natural projection. Then $\pi(K)=\langle \ov{b}, \ov{b}^{\ov{a}}, \ov{a}^2\rangle$, hence the minimal quotient exponent of $a$ equals $2$,
	$$
	\min \{n\in \BN\mid \ov{a}^n \in \pi(C_K(c^2))=\pi(K)\}=2.
	$$
	\end{expl}
	
	Let $w$ be a vertex of $\widetilde{\Delta}^e$, and let $\gamma_K(w)=v$ and $u=\delta_K(v)$. Thus $u$ is a canonical generator of $\GG$ with non-abelian centralizer, and $w=u^g$ for some $g \in \GG$.
	Suppose $u_1, \ldots, u_k$ are all vertices of $\Delta$ adjacent to $u$, then $k \geq 2$ is the degree of $u$ in $\Delta$, and $u_1, \ldots, u_s$ are those of them which have degree more than 1 (and so have non-abelian centralizer), here $s \leq k$. Note that
	$$
	C_K(w) \le C_\GG(w) \cong Z(C_\GG(w)) \times  F(u_1^g, \ldots, u_k^g),
	$$
	where $Z(C_\GG(w))= \langle w \rangle$ is cyclic and $F(u_1^g, \ldots, u_k^g)$ is the free group of rank $k$ with the basis $u_1^g, \ldots, u_k^g$. Let $F_{K,w}$ be the free group of rank $k$ with the basis $q_1, \ldots, q_k$. Denote by
	$$
	\pi:C_\GG(w) \rightarrow F_{K,w}
	$$
	the factorization by the center homomorphism induced by the map $\pi(w)=1, \: \pi(u_i^g)=q_i, \: i=1, \ldots, k$. Then $\pi$ induces an isomorphism between $F(u_1^g, \ldots, u_k^g)$ and $F_{K,w}$. Below, when we speak about cycles in the Schreier graph, we mean simple cycles with all edges labelled by the same generator, and we mean the Schreier graph with respect to the generators $q_1, \ldots, q_k$ of $F_{K,w}$.

	\begin{lemma}\label{l4-1}
	In the above notation, the epimorphism $\pi$ induces a group embedding $\pi_C:  C_K(w) / Z(C_K(w)) \hookrightarrow F_{K,w}$, and $P_{K,w}=\pi_C(C_K(w) / Z(C_K(w)))=\pi(C_K(w))$ is a finite index subgroup of $F_{K,w}$.
	\end{lemma}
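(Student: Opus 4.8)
The plan is to realise $\pi_C$ as the map induced by $\pi$ on the quotient $C_K(w)/Z(C_K(w))$, so that the whole statement reduces to two claims: first, that the kernel of the restriction $\pi|_{C_K(w)}$ is exactly the centre $Z(C_K(w))$; and second, that the image $\pi(C_K(w))$ has finite index in $F_{K,w}$. Granting these, the first isomorphism theorem immediately produces the embedding $\pi_C$ with image $P_{K,w}=\pi(C_K(w))$, and the finite-index assertion is the second claim. Throughout I write $C=C_\GG(w)\cong \langle w\rangle \times F$, where $F=F(u_1^g,\ldots,u_k^g)$ is free of rank $k\geq 2$ (since $u$ has non-abelian centraliser, its degree $k$ in $\Delta$ is at least $2$), I identify $\pi$ with the projection $C\to F\cong F_{K,w}$ killing the central factor $\langle w\rangle =Z(C)=\ker\pi$, and I record that $C_K(w)=K\cap C$ has finite index in $C$ because $K$ has finite index in $\GG$.

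For the finite-index claim I would use the general fact that the projection of a finite-index subgroup of a direct product onto a factor has finite index. Concretely, writing $N=\ker(\pi|_C)=\langle w\rangle$, the subgroup $C_K(w)\cdot N$ contains $C_K(w)$ and hence has finite index in $C$; since $N=\ker\pi$, the map $\pi$ induces an isomorphism $C/(C_K(w)\cdot N)\cong F_{K,w}/\pi(C_K(w))$, whence $[F_{K,w}:\pi(C_K(w))]=[C:C_K(w)\cdot N]\leq [C:C_K(w)]<\infty$. In particular, by the Schreier formula $\pi(C_K(w))$ is itself free of rank $\geq k\geq 2$, hence non-abelian — a fact I will use in the next step.

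The main point is the identification $\ker(\pi|_{C_K(w)})=Z(C_K(w))$. Since $\ker(\pi|_{C_K(w)})=C_K(w)\cap\langle w\rangle$ lies in the centre of $C_K(w)$, one inclusion is immediate, and the content is that the centre is no larger. Take a central element of $C_K(w)$, written as $(z,f)$ in the coordinates of $\langle w\rangle\times F$. Because $\langle w\rangle$ is central in $C$, commuting with every $(z',f')\in C_K(w)$ amounts to $f$ commuting with every element of $\pi(C_K(w))$. Now in a free group the centraliser of any nontrivial element is cyclic, so if $f\neq 1$ then $C_F(f)$ is cyclic yet contains the non-abelian group $\pi(C_K(w))$, which is impossible. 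Hence $f=1$, i.e. $(z,f)\in C_K(w)\cap\langle w\rangle$, giving $Z(C_K(w))\subseteq\ker(\pi|_{C_K(w)})$.

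Combining the two claims yields the lemma. I expect the centre computation to be the crux: it is precisely there that one must invoke both the non-abelianity of $\pi(C_K(w))$ (which rests on the finite-index/Schreier step) and the rigidity of centralisers in free groups, whereas the finite-index statement is a soft fact about direct products.
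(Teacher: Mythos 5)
Your proof is correct, but the crux step is argued differently from the paper. Both proofs handle the finite-index claim the same way (the image of a finite-index subgroup under the surjection $\pi$ has finite index; your coset count via $C_K(w)\cdot N$ just spells this out). The difference is in identifying $\ker(\pi|_{C_K(w)})$ with $Z(C_K(w))$. The paper shows $Z(C_K(w))\le Z(C_\GG(w))=\langle w\rangle$ by a commensurability argument internal to RAAG structure theory: every $z\in C_\GG(w)$ has a power $z^l\in C_K(w)$, so a central element of $C_K(w)$ commutes with $z^l$ and hence with $z$ itself (this last step uses that centralizers in RAAGs satisfy $C(z^l)=C(\sqrt{z^l})=C(z)$, i.e.\ Corollary \ref{cor:centr}); thus central elements of $C_K(w)$ are central in $C_\GG(w)$. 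You instead work in the explicit coordinates $\langle w\rangle\times F$ and argue that the free part $f$ of a central element would have to centralize all of $\pi(C_K(w))$, which is impossible because $\pi(C_K(w))$ is non-abelian (finite index in $F_k$, $k\ge 2$, by Schreier) while centralizers of nontrivial elements of free groups are cyclic. What each approach buys: the paper's argument is independent of the finite-index claim and leans only on the centralizer machinery already established for RAAGs, whereas yours is more elementary and self-contained (only standard free-group facts) but forces a logical ordering — the finite-index/Schreier step must precede the center computation, a dependency you correctly flag. One cosmetic point: $C_K(w)\cdot N$ need not be normal in $C$, so $C/(C_K(w)\cdot N)$ should be read as a coset space rather than a quotient group; the index equality you want follows from the correspondence theorem and is unaffected.
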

	\begin{proof}
		Let $j$ be the minimal (positive) power of $w$ which belongs to $K$. Note that $Z(C_K(w))\le Z(C_\GG(w))$, since if some $z \in C_\GG(w)$, then for some positive integer $l$ $z^l \in C_K(w)$, so every element in the center of $C_K(w)$ should also commute with $z$. Also $Ker (\pi) =   Z(C_\GG(w))=  \langle w \rangle,$ so $Ker (\pi|_{C_K(w)})= K \cap  Z(C_\GG(w)) = \langle w^j \rangle = Z(C_K(w))$. This means that $\pi$ induces an embedding $\pi_C:  C_K(w) / Z(C_K(w)) \hookrightarrow F_{K,w}$. Since $C_K(w)=K \cap C_\GG(w)$ has finite index in $C_\GG(w)$, it follows that $P_{K,w}$ has finite index in $F_{K,w}$.
	\end{proof}

	\begin{lemma}\label{l4}
		In the above notation the following statements hold:
		\begin{enumerate}
			\item $\pi$ induces a bijection $\pi_*$ between the edges of $\Psi(K)$ incident to $v=\gamma_K(w)$ and those cycles in the Schreier graph of $P_{K,w}$ in $F_{K,w}$ which correspond to the generators $q_1, \ldots, q_s$.
			\item  For every edge $p$ of $\Psi(K)$ connecting $v=\gamma_K(w)$ with some other vertex $v_1$, the label $l_{v_1}(p)$ of $p$ with respect to $v_1$ is equal to the length of the cycle $\pi_*(p)$ in the Schreier graph of $P_{K,w}$ in $F_{K,w}$.
		\end{enumerate}
	\end{lemma}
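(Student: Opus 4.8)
The plan is to transport the entire local picture at $w$ into the free group $F_{K,w}$ through $\pi$, and then read off both the bijection and the labels from the Schreier graph of $P_{K,w}$.

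First I would pin down the neighbours of $w$ in $\widetilde{\Delta}^e$ via the Bass--Serre description of Lemma \ref{bs}. Under the equivariant isomorphism $\widetilde{\Delta}^e \cong \widetilde{T}$ the vertex $w = u^g$ corresponds to the coset $g C_{\GG}(u)$, and by standard Bass--Serre theory the link of this vertex is in bijection with $\coprod_{i=1}^{s} C_{\GG}(w)/\langle w, u_i^g\rangle$, where the disjoint union runs over the edges of the \emph{reduced} splitting incident to $u$, that is over the neighbours $u_1, \dots, u_s$ of $u$ of degree more than $1$ (so that only $q_1, \dots, q_s$ will occur, as claimed). Concretely the coset $c\langle w, u_i^g\rangle$, with $c \in C_{\GG}(w)$, corresponds to the neighbour $(u_i^g)^c$, and this depends only on the coset since $C_{\GG}(w)\cap C_{\GG}(u_i^g)=\langle w, u_i^g\rangle$ is exactly the edge group.

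Next I would push this through $\pi$. As $\pi$ kills the centre $\langle w\rangle$, sends $u_i^g \mapsto q_i$, and restricts to an isomorphism $F(u_1^g,\dots,u_k^g)\cong F_{K,w}$, it carries $\langle w, u_i^g\rangle$ onto $\langle q_i\rangle$ and $C_{\GG}(w)$ onto $F_{K,w}$, hence gives a bijection $C_{\GG}(w)/\langle w, u_i^g\rangle \cong F_{K,w}/\langle q_i\rangle$ taking $(u_i^g)^c$ to the coset $\pi(c)\langle q_i\rangle$. By Lemma \ref{l3}(2) the edges of $\Psi(K)$ incident to $v$ are the $C_K(w)$-orbits of neighbours of $w$; since $h\in C_K(w)$ acts by $(u_i^g)^c \mapsto (u_i^g)^{hc}$, under $\pi$ this is left multiplication by $\pi(h)\in P_{K,w}$ on $\pi(c)\langle q_i\rangle$. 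Thus these orbits are precisely the double cosets $P_{K,w}\backslash F_{K,w}/\langle q_i\rangle$ for $i=1,\dots,s$, and each such double coset is exactly one $q_i$-labelled cycle of the Schreier graph of $P_{K,w}$ (a monochromatic cycle being an orbit of right multiplication by $q_i$ on cosets). Composing these bijections produces the map $\pi_*$ of part (1).

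For part (2) I would compute the label directly. Taking $f$ to be the lift of $p$ joining $w$ to $w_1=(u_i^g)^c$, we have $l_{v_1}(p)=\overline{l}_{w_1}(f)$, the least positive $m$ with $w_1^m w^l\in K$ for some integer $l$. Because $w_1\in C_{\GG}(w)$, $\pi(w)=1$, and $\ker\pi=\langle w\rangle$, I claim $w_1^m w^l\in K$ for some $l$ if and only if $\pi(w_1)^m\in \pi(C_K(w))=P_{K,w}$: if $\pi(w_1)^m\in P_{K,w}$ then $\pi(w_1^m)=\pi(z)$ for some $z\in C_K(w)$, so $w_1^m=z w^{-l}$ for a suitable $l$ and $w_1^m w^l=z\in K$, the converse being immediate. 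Writing $x=\pi(c)$ gives $\pi(w_1)=x q_i x^{-1}$, so $\overline{l}_{w_1}(f)$ is the least $m$ with $x q_i^m x^{-1}\in P_{K,w}$, which is exactly the length of the $q_i$-cycle through the coset $P_{K,w}x$, namely $\pi_*(p)$.

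The genuinely delicate point is not any single computation but the bookkeeping of handedness: one must match the $C_K(w)$-action by conjugation on neighbours with the left-multiplication action of $P_{K,w}$ producing the monochromatic cycles, and confirm that exactly the generators $q_1,\dots,q_s$ arise. Once this is aligned, the crux of the whole statement is the equivalence ``$w_1^m w^l\in K$ for some $l$'' $\Leftrightarrow$ ``$\pi(w_1)^m\in P_{K,w}$'', which is precisely what converts a minimal quotient exponent into a Schreier cycle length.
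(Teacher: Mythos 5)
Your proof is correct, and its overall architecture matches the paper's: transport the local picture at $w$ through $\pi$, use the equivariance of that correspondence together with Lemma \ref{l3} to pass to $\Psi(K)$, and convert the label into a Schreier cycle length via the key observation that $\ker\pi=\langle w\rangle$ makes ``$w_1^m w^l\in K$ for some $l$'' equivalent to ``$\pi(w_1)^m\in P_{K,w}$'' (your part (2) is essentially verbatim the paper's). Where you genuinely diverge is in how the bijection of part (1) is established. The paper works with \emph{lines} in the Cayley graph of $F_{K,w}$ (bi-infinite monochromatic paths) and constructs the map $f\mapsto\overline{\pi}(f)$ by hand, with explicit free-group computations to check well-definedness, injectivity and surjectivity, before quotienting by the two actions. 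You instead invoke Bass--Serre theory through Lemma \ref{bs}: the link of $w$ in $\widetilde{\Delta}^e$ is $\coprod_{i=1}^{s} C_{\GG}(w)/\langle w,u_i^g\rangle$ because the edge groups of the reduced splitting are exactly $C_{\GG}(w)\cap C_{\GG}(u_i^g)=\langle w,u_i^g\rangle$; pushing through $\pi$ identifies this with $\coprod_i F_{K,w}/\langle q_i\rangle$, and the $C_K(w)$-orbits become double cosets $P_{K,w}\backslash F_{K,w}/\langle q_i\rangle$, which are precisely the monochromatic cycles. Since a $q_i$-labelled line through $\pi(y)$ is nothing but the coset $\pi(y)\langle q_i\rangle$, the two intermediate objects coincide; what your route buys is that the paper's ad hoc verifications (e.g.\ that $x^{g_2}=x^{g_2'}$ forces $g_2'=g_2x^k$) are absorbed into standard facts about edge stabilizers and cosets, making the argument shorter and the double-coset structure transparent, at the cost of leaning on Lemma \ref{bs} and general Bass--Serre theory where the paper's proof of this lemma is self-contained.
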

	\begin{proof}
		Consider the Cayley graph of $F_{K,w}$ (with respect to the basis $q_1,\ldots,q_k$). A \emph{line} in the Cayley graph of $F_{K,w}$ is a bi-infinite path, where every edge is labelled by the same generator.
		
		We define a bijection $\overline{\pi}$ between the edges of $\widetilde{\Delta}^e$ incident to $w$ and lines in the Cayley graph of $F_{K,w}$ labelled one of the generators $q_1, \ldots, q_s$ (which have non-abelian centralizers). Let $x$ be one of the generators $u_1, \ldots, u_s$, and $f$ be an edge connecting $w=u^g$ with $w_1=x^{g_1}$, for some $g_1 \in \GG$. As in the proof of Lemma \ref{bs}, this means that $gC(u) \cap g_1 C(x) \neq \emptyset $, so $w_1=x^{g_1}=x^{gg_2}$, where $g_2 \in C(u)= \langle u, u_1, \ldots, u_k \rangle .$ Note that $g_2$ is defined up to multiplication on the right by an element of the centraliser of $x$. We can suppose that $g_2 \in \langle u_1, \ldots, u_k \rangle $, since $u$ commutes with $u_1, \ldots, u_k$, in particular with $x$ as well. Then $w_1=gg_2xg_2^{-1}g^{-1}=y{x^g}y^{-1}$, where $y=g_2^g$ is the word obtained from $g_2$ by replacing each $u_i$ by $u_i^g$, for $i=1, \ldots, k$, so $y$ can be thought of as an element of $F(u_1^g, \ldots, u_k^g)$. Now let $\overline{\pi}(f)$ be the line in the Cayley graph of $F_{K,w}$ passing through the vertex corresponding to $\pi(y)$, and with edges labeled by the element $\pi(x^g)=q$, which is equal to one of the elements $q_1, \ldots, q_s$.

		Note that $\overline{\pi}$ is well-defined. Indeed, suppose we can write $w_1$ in two ways:  $w_1=x^{g_1}=x^{gg_2}$ and  $w_1=x^{g_1'}=x^{gg_2'}$, where $g_2, g_2' \in \langle u_1, \ldots, u_k \rangle$, and $x$ is one of $u_1, \ldots, u_s$. Thus in $\GG$ we have $x^{gg_2}=x^{gg_2'}$, so $x^{g_2}=x^{g_2'}$. This can be considered as an equality in the free group on $u_1, \ldots, u_k$. Then $[x, g_2^{-1}g_2']=1$, so $g_2^{-1}g_2'=x^k$ for some $k \in \mathbb{Z}$, which means that $g_2'=g_2x^k$, and then $y'=y(x^g)^k$, where $y=g_2^g, \: y'=g_2'^g$, which means that $\pi(y')=\pi(y)q^k$, so the line in the Cayley graph is defined correctly.

		It follows from the definition of $\overline{\pi}$ that it is surjective. Now we show that $\overline{\pi}$ is injective. Suppose $f$ connects $w$ with $w_1=x^{g_1}$, anf $f'$ connects $w$ with $w_1'=x^{g_1'}$, $g_1=gg_2, \: y=g_2^g$ and $g_1'=gg_2', \: y'=g_2'^g$, where $g_2, g_2' \in \langle u_1, \ldots, u_k \rangle$.  If $\overline{\pi}(f)=\overline{\pi}(f')$, then  $\pi(y)=\pi(y')q^k$, so $y=y'(x^g)^k$ for some $k \in \mathbb{Z}$. Then $g_2=g_2'x^k$, and thus $g_1=g_1'x^k$, so $w_1=w_1'$. This shows that $\overline{\pi}$ is bijective.

		Note that $C_\GG(w)$ acts on the set of edges incident to the vertex $w$ of $\widetilde{\Delta}^e$, and thus $C_K(w)$ also acts on this set. The action of $w$ is trivial, so this induces the action of $C_K(w) / Z(C_K(w))$ on the set of edges incident to $w$. On the other hand, $P_{K,w}=\pi_C(C_K(w) / Z(C_K(w)))\le F_{K,w}$ acts on the Cayley graph of $F_{K,w}$ in the natural way (by left multiplication), and so $P_{K,w}$ also acts on the set of lines in the Cayley graph of $F_{K,w}$ defined above. Recall that $\pi_C$ induces an isomorphism $C_K(w) / Z(C_K(w)) \cong \pi_C(C_K(w) / Z(C_K(w)))$. It follows from the definition of $\overline{\pi}$ that for every edge $f$ of $\widetilde{\Delta}^e$ incident to $w$ and every $h \in C_K(w) / Z(C_K(w))$ we have $\overline{\pi}(h \cdot f)=\pi_C(h) \cdot \overline{\pi}(f)$, where we denote both actions defined above by dots. This means that the bijection $\overline{\pi}$ induces a bijection between the orbits of the edges incident to $w$ under the action of $C_K(w)$ on one side, and the orbits of the lines (as above) in the Cayley graph of $F_{K,w}$ under the action of $P_{K,w}$ on the other side.  By Lemma \ref{l3}, the orbits of the edges incident to $w$ under the action of $C_K(w)$ are in one-to-one correspondence with the edges of $\Psi(K)$ incident to $v=\gamma_K(w)$. And the orbits of the lines (as above) in the Cayley graph of $F_{K,w}$ under the action of $P_{K,w}$ are in a natural one-to-one correspondence with the cycles in the Schreier graph of $P_{K,w}$ in $F_{K,w}$, which are labelled by the generators $q_1, \ldots, q_s$. This defines the desired bijection $\pi_*$ and proves the first claim of the Lemma.

		Now we turn to the second claim. Let $f$ be an edge of $\widetilde{\Delta}^e$ incident to $w$ such that $\gamma_K(f)=p$, and let $f$ connect $w$ with $w_1$, so $\gamma_K(w_1)=v_1$. Recall that the label of $p$ at $v_1$ is equal to the label of $f$ at $w_1$, $\overline{l}_{w_1}(f)$, which is the minimal positive integer $k$ such that there exists an integer $l$  such that $w_1^k w^l \in K$. Note that this means that $\pi(w_1^k)=\pi(w_1^kw^l) \in P_{K,w}$. Since $Ker(\pi)= \langle w \rangle$, it follows that $\overline{l}_{w_1}(f)$ is equal to the minimal positive integer $k'$ such that $(\pi(w_1))^{k'} \in P_{K,w}$. Let $w=u^g$ and $w_1=x^{gg_2}$ as above, where $g_2 \in \langle u_1, \ldots, u_k \rangle$.
		
		Denote, as above, $\pi(x^g)=q, \: g_2^g=y$.
		We have $\pi(w_1)=\pi(x^{gg_2})= \pi(y)q\pi(y)^{-1},$ and $\overline{\pi}(f)$ is the line passing through $\pi(y)$ and with edges labelled by $q$. Thus $\overline{l}_{w_1}(f)$ is equal to the minimal positive integer $k'$ such that
		$$
		(\pi(w_1))^{k'}=(\pi(y)q\pi(y)^{-1})^{k'}=\pi(y)q^{k'}\pi(y)^{-1} \in P_{K,w},
		$$
		which is equivalent to $r\pi(y)=\pi(y)q^{k'}$ for some $r \in P_{K,w}$. Thus, $\overline{l}_{w_1}(f)$ is equal to the minimal positive exponent $k'$ of $q$ for which points $\pi(y)$ and $\pi(y)q^{k'}$ on the line $\overline{\pi}(f)$ in the Cayley graph of  $F_{K,w}$ are equivalent under the action of $P_{K,w}$, and this is exactly the length of the corresponding cycle $\pi_*(f)$ in the Schreier graph of $P_{K,w}$ in $F_{K,w}$. This proves the second claim.
		\end{proof}

		\begin{lemma}\label{l4-4}
		In the above notation, let $w, w'$ be two vertices of $\widetilde{\Delta}^e$ in the same orbit under the action of $K$. Then there is a natural isomorphism $\alpha$ between $F_{K,w}$ and $F_{K,w'}$, which restricts to an isomorphism $\alpha_P$ between $P_{K,w}$ and $P_{K,w'}$ and respects the bijections from {\rm Lemma \ref{l4}}.
		\end{lemma}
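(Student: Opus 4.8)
The plan is to exploit directly the hypothesis that $w,w'$ lie in the same $K$-orbit by producing a conjugating element and transporting the whole picture along it. Since $w$ and $w'$ are in the same $K$-orbit, there is $h \in K$ with $w' = w^h$. Writing $w = u^g$ for the canonical generator $u = \delta_K(\gamma_K(w))$, and noting that $\gamma_K(w) = \gamma_K(w')$ forces the \emph{same} $u$, the convention $g^h = hgh^{-1}$ gives $w' = u^{g'}$ with $g' = hg$, and the vertices of $\Delta$ adjacent to $u$ are the same list $u_1, \ldots, u_k$ for both $w$ and $w'$. I would then simply define $\alpha \colon F_{K,w} \to F_{K,w'}$ to be the isomorphism of free groups of rank $k$ determined by $\alpha(q_i) = q_i'$.

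Next I would consider conjugation $c_h \colon x \mapsto x^h$. A direct computation gives $c_h(w) = w'$ and $c_h(u_i^g) = u_i^{g'}$, so $c_h$ restricts to an isomorphism $C_\GG(w) \to C_\GG(w')$ carrying the splitting $\langle w \rangle \times F(u_1^g, \ldots, u_k^g)$ onto $\langle w' \rangle \times F(u_1^{g'}, \ldots, u_k^{g'})$. Crucially, $h \in K$, so $c_h$ preserves $K$ and hence maps $C_K(w) = K \cap C_\GG(w)$ onto $C_K(w') = K \cap C_\GG(w')$. Comparing both maps on the generators $w, u_1^g, \ldots, u_k^g$ of $C_\GG(w)$ shows that the square commutes, i.e. $\alpha \circ \pi = \pi' \circ c_h$. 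Applying this identity to $C_K(w)$ yields $\alpha(P_{K,w}) = \alpha(\pi(C_K(w))) = \pi'(c_h(C_K(w))) = \pi'(C_K(w')) = P_{K,w'}$, so $\alpha$ restricts to the desired isomorphism $\alpha_P \colon P_{K,w} \to P_{K,w'}$.

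To check that $\alpha$ respects the bijections of Lemma \ref{l4}, I would trace through the bijection $\overline{\pi}$ constructed in that proof. Since $h \in K$ acts as a graph automorphism of $\widetilde{\Delta}^e$, it carries edges incident to $w$ to edges incident to $w'$, and an edge $f$ together with its image $f^h$ project to the \emph{same} edge $p$ of $\Psi(K)$. For $f$ joining $w = u^g$ to $w_1 = x^{gg_2}$ with $g_2 \in \langle u_1, \ldots, u_k \rangle$ and $y = g_2^g$, the image $f^h$ joins $w'$ to $x^{g'g_2}$ with the same $g_2$, whence $y' = g_2^{g'} = c_h(y)$. Using the commuting square, $\alpha(\pi(y)) = \pi'(y')$ and $\alpha(\pi(x^g)) = \pi'(x^{g'})$, so $\alpha$ carries the labelled line $\overline{\pi}_w(f)$ exactly to $\overline{\pi}_{w'}(f^h)$. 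As $\alpha$ also intertwines the $P_{K,w}$- and $P_{K,w'}$-actions, it descends to an isomorphism of Schreier graphs sending the cycle assigned by $\pi_*$ to $p$ (computed via $w$) to the cycle assigned by $\pi_*'$ to $p$ (computed via $w'$), which is precisely the required compatibility.

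I expect the main obstacle to be the bookkeeping in the last step: verifying that the auxiliary choices in the definition of $\overline{\pi}$ — the coset representative $g_2$ and the base vertex $\pi(y)$ of each line — transform compatibly under $c_h$, so that the two labelled lines genuinely correspond under $\alpha$, and that this correspondence is compatible with passing to the $K$-orbit quotient $\Psi(K)$ so that both edge-to-cycle bijections are indexed by the same edges $p$.
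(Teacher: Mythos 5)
Your proof is correct and takes essentially the same route as the paper's: your conjugation map $c_h$ is precisely the paper's $\overline{\alpha}$, your commuting square $\alpha\circ\pi = \pi'\circ c_h$ is the paper's identity $\alpha\pi = \pi'\overline{\alpha}$, and the derivation $\alpha(P_{K,w}) = \pi'\bigl(c_h(C_K(w))\bigr) = \pi'(C_K(w')) = P_{K,w'}$ is word-for-word the same argument. The only difference is that you spell out the final compatibility with the bijections of Lemma \ref{l4} (tracking $g_2$, the base point $\pi(y)$, and the lines $\overline{\pi}(f)$ under conjugation by $h$), a step the paper dispatches with ``it follows from the definition of $\pi_*, \pi'_*$'' --- a welcome elaboration rather than a divergence.
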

		\begin{proof}
		Let $w'=w^h$, where $h \in K$, and let $u, u_1, \ldots, u_k$ be as above. Then
		$$
		\begin{array}{l}
		w=u^g, \: C_\GG(w) \cong \langle u^g \rangle \times  F(u_1^g, \ldots, u_k^g),\\
		w'=u^{hg}, \: C_\GG(w')=(C_\GG(w))^h \cong \langle u^{hg} \rangle \times  F(u_1^{hg}, \ldots, u_k^{hg}).
		\end{array}
		$$
		Also
		$$
		\begin{array}{l}
		F_{K,w}=F(q_1, \ldots, q_k), \: F_{K,w'}=F(q_1', \ldots, q_k'),\\
		\pi:C_\GG(w) \rightarrow F_{K,w}, \:\pi(w)=1, \: \pi(u_i^g)=q_i, \: i=1, \ldots, k, \\
		\pi':C_\GG(w') \rightarrow F_{K,w'}, \:\pi(w^h)=1, \: \pi(u_i^{hg})=q_i', \: i=1, \ldots, k.
		\end{array}
		$$
		Let $\overline{\alpha}$ be the isomorphism $\overline{\alpha}: C_\GG(w) \rightarrow C_\GG(w'), \: \overline{\alpha}(t)=t^h$ for $t \in C_\GG(w)$. Define the isomorphism $\alpha: F_{K,w} \rightarrow F_{K,w'}, \: \alpha(q_i)=q_i', \: i=1, \ldots, k$. Then $\alpha\pi=\pi'\overline{\alpha}$ by definitions.
		
		We have
		$$
		C_K(w')=K \cap C_\GG(w')= K \cap (C_\GG(w))^h = (K \cap C_\GG(w))^h=C_K(w)^h,
		$$
		so $\overline{\alpha}(C_K(w))=C_K(w')$. Note that  $P_{K,w}=\pi(C_K(w)), \: P_{K,w'}=\pi'(C_K(w'))$, so
		$$
		\alpha(P_{K,w})= \alpha\pi(C_K(w))=\pi'\overline{\alpha}(C_K(w))=\pi'(C_K(w'))=P_{K,w'},
		$$ so $\alpha$ induces an isomorphism $\alpha_P$ between $P_{K,w}$ and $P_{K,w'}$. Thus $\alpha$ also induces an isomorphism between the Schreier graphs of $P_{K,w}$ in $F_{K,w}$ and of $P_{K,w'}$ in $F_{K,w'}$ in a natural way.
		
		Recall that $\pi_*$ is a bijection between the edges of $\Psi(K)$ incident to $v=\gamma_K(w)$ and those cycles in the Schreier graph of $P_{K,w}$ in $F_{K,w}$ which correspond to the generators $q_1, \ldots, q_s$. Analogously $\pi'_*$ is a bijection between the edges of $\Psi(K)$ incident to $v=\gamma_K(w')=\gamma_K(w)$ and those cycles in the Schreier graph of $P_{K,w'}$ in $F_{K,w'}$ which correspond to the generators $q_1', \ldots, q_s'$. It follows from the definition of $\pi_*, \pi'_*$ that for every edge $p$  of $\Psi(K)$ incident to $v$ the cycle $\pi_*(p)$ goes into the cycle $\pi'_*(p)$ under the isomorphism induced by $\alpha$, i.e., $\alpha$ respects the bijections.
	\end{proof}

	If $v$ is some vertex of $\Psi(H)$, then by Lemma \ref{l4-4} different choices of $w$ in $\widetilde{\Delta}^e$ such that $\gamma_K(w)=v$ give the same groups $F_{K,w}$, subgroups $P_{K,w}$, as well as bijections $\pi_*$, as in Lemma \ref{l4}, up to isomorphism. This means that the exact choice of such $w$ is unimportant, and, abusing the notation, we will denote $F_{K,v}=F_{K,w}, \: P_{K,v}=P_{K,w}$, where the vertex $w$ is any vertex of $\widetilde{\Delta}^e$ such that $\gamma_K(w)=v$.

	\begin{lemma}\label{lll}
		In the above notation, with $v$ being a vertex of $\Psi(K)$ and $u=\delta_K(v)$, let $u_0$ be some vertex of $\Delta$ of degree more than 1, connected to $u$ by an edge $e_0$. If $f_1, \ldots, f_k$ are all edges of $\Psi(K)$ incident to the vertex $v$ such that $\delta_K(f_i)=e_0$ for all $i=1,\ldots,k$, and each $f_i$ connects $v$ with the vertex $v_i$ {\rm(}$i=1,\ldots, k${\rm)}, then
		\begin{equation}\label{sum}
		\sum_{i=1}^k l_{v_i}(f_i)= |F_{K,v}:P_{K,v}|.
		\end{equation}	
		If $z_1, \ldots, z_p$ are all edges of $\Psi(K)$ incident to $v$, and each $z_i$ connects $v$ with the vertex $w_i$, then
		\begin{equation}\label{sum1}
		\sum_{i=1}^p l_{w_i}(z_i)= r \: |F_{K,v}:P_{K,v}|.
		\end{equation}	
		where $r$ is the number of vertices of degree greater than 1 adjacent to $u$ in $\Delta$.
	\end{lemma}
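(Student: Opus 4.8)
The plan is to read both identities off Lemma \ref{l4} together with the elementary combinatorics of the Schreier graph of the finite index subgroup $P_{K,v}\le F_{K,v}$. First I would fix a vertex $w$ of $\widetilde{\Delta}^e$ with $\gamma_K(w)=v$, so that $w=u^g$ and
$C_\GG(w)\cong\langle w\rangle\times F(u_1^g,\dots,u_k^g)$, where $u_1,\dots,u_k$ are the neighbours of $u$ in $\Delta$ and $u_1,\dots,u_s$ are those of degree more than $1$. Under the projection $\pi$ these correspond to the free generators $q_1,\dots,q_k$ of $F_{K,v}$, with $q_1,\dots,q_s$ corresponding to the non-leaf neighbours, so that $s=r$. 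The distinguished neighbour $u_0$ and its edge $e_0$ single out one of these, say $q_{j_0}$ with $1\le j_0\le s$, namely $q_{j_0}=\pi(u_0^g)$.

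Next I would recall the one structural fact needed about the Schreier graph of $P_{K,v}$ in $F_{K,v}$: for each fixed generator $q_j$, the edges labelled $q_j$ form the functional graph of the permutation of the (finitely many) cosets given by multiplication by $q_j$. Since this is a permutation of the full coset set, every vertex lies on exactly one $q_j$-labelled simple cycle, so the $q_j$-cycles partition the vertex set and the sum of their lengths equals the total number of vertices of the Schreier graph, which is exactly $|F_{K,v}:P_{K,v}|$. Crucially, this holds separately for each of the generators $q_1,\dots,q_s$.

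Now I would translate this into the language of $\Psi(K)$. By Lemma \ref{l4}(1), the bijection $\pi_*$ matches the edges $f$ of $\Psi(K)$ incident to $v$ with the simple cycles of the Schreier graph labelled by $q_1,\dots,q_s$; moreover, tracing through the construction of $\overline{\pi}$ in the proof of Lemma \ref{l4}, an edge $f$ whose other endpoint is a conjugate of $u_0$ is sent to a cycle labelled precisely by $q_{j_0}=\pi(u_0^g)$, so that $\delta_K(f)=e_0$ holds if and only if $\pi_*(f)$ is a $q_{j_0}$-cycle. By Lemma \ref{l4}(2) the label $l_{v_i}(f_i)$ equals the length of the cycle $\pi_*(f_i)$. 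Hence $f_1,\dots,f_k$ are in bijection with exactly the $q_{j_0}$-labelled cycles, and summing their labels gives the total length of all $q_{j_0}$-cycles, which by the previous paragraph equals $|F_{K,v}:P_{K,v}|$; this is \eqref{sum}.

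Finally, for \eqref{sum1} I would run the same argument simultaneously over all $r=s$ non-leaf neighbours of $u$. Under $\pi_*$ the edges $z_1,\dots,z_p$ incident to $v$ correspond to all simple cycles labelled by $q_1,\dots,q_s$; partitioning them according to their labelling generator and applying \eqref{sum} to each of the $r$ generators yields $\sum_{i=1}^p l_{w_i}(z_i)=r\,|F_{K,v}:P_{K,v}|$. The only step requiring genuine care, and the closest thing to an obstacle, is the verification that $\pi_*$ respects the labelling, i.e. that $\delta_K(f)=e_0$ corresponds exactly to the $q_{j_0}$-cycles rather than to an arbitrary collection of cycles; as indicated above this is immediate from how $\overline{\pi}$ is defined in Lemma \ref{l4}, where the labelling generator is read off as $\pi$ of the conjugate of the neighbouring canonical generator at the other end of the edge. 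Everything else is bookkeeping on top of Lemma \ref{l4} and the permutation structure of Schreier graphs.
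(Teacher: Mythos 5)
Your proposal is correct and follows essentially the same route as the paper's own proof: both read equation \eqref{sum} off the bijection $\pi_*$ of Lemma \ref{l4} (edges with $\delta_K(f_i)=e_0$ correspond to the cycles labelled by the generator $q_{j_0}=\pi(u_0^g)$, with labels equal to cycle lengths) together with the fact that those cycles partition the vertices of the Schreier graph, whose number is $|F_{K,v}:P_{K,v}|$, and then obtain \eqref{sum1} by summing over the $r$ non-leaf neighbours. Your extra care in checking that $\pi_*$ respects the labelling generator is a detail the paper leaves implicit, but it is the same argument.
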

	\begin{proof}
		According to Lemma \ref{l4}, there is a bijection between the set of edges $f_1, \ldots, f_k$ and the set of cycles in the Schreier graph of $P_{K,v}$ in $F_{K,v}$ which correspond to conjugates of $u_0$, and the length of each cycle is equal to the corresponding edge label. The sum of lengths of these cycles is equal to the number of vertices in the Schreier graph, which is equal to $|F_{K,v}:P_{K,v}|$, thus (\ref{sum}) follows. Summing all the equalities of the form (\ref{sum}) for all vertices of $\Delta$ of degree greater than 1 adjacent to $u$ gives (\ref{sum1}).
	\end{proof}
	
	As above, suppose now $\Delta$ and $\Delta'$ are finite trees such that $\GG=\mathbb{G}(\Delta)$ and $\GG'=\mathbb{G}(\Delta')$ are commensurable, and so there exist finite index subgroups $H \le \GG$ and $H' \le \GG'$ and an isomorphism $\varphi: H \rightarrow H'$, and $\Psi(H)$, $\Psi(H')$ are defined as above. Recall that for a vertex $v$ of $\Psi(H)$ and a vertex $v'$ of $\Psi(H')$ the finite index subgroups $P_{H,v} \le F_{H,v}$ and $P_{H',v'} \le F_{H',v'}$ are defined as in
 {\rm Lemma \ref{l4-1}}.
 	
	\begin{lemma}\label{equations1}
		Let $v$ be a vertex of $\Psi(H)$, and let $v'=\varphi_*(v)$ be the corresponding vertex of $\Psi(H')$, where $\varphi_*$ is the graph isomorphism between $\Psi(H)$ and $\Psi(H')$ from {\rm Lemma \ref{l2}}. Let $\delta_H(v)=u$ and $\delta_{H'}(v')=u'$, and suppose $u$ has degree $t$ in $\Delta$, and $u'$ has degree $t'$ in $\Delta'$. Then 
		\begin{equation}\label{index}
		(t - 1) \: |F_{H,v}:P_{H,v}| = (t' - 1) \: |F_{H',v'}:P_{H',v'}|.
		\end{equation}
	\end{lemma}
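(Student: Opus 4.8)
The plan is to reduce identity \eqref{index} to an equality of ranks of free groups and then apply the Nielsen--Schreier index formula. Recall from the setup preceding Lemma \ref{l4-1} that, choosing a vertex $w$ of $\widetilde{\Delta}^e$ with $\gamma_H(w)=v$ and writing $u=\delta_H(v)$, the centraliser $C_\GG(w)$ splits as $\langle w\rangle\times F(u_1^g,\dots,u_t^g)$, so the factorisation-by-centre map $\pi$ sends $C_\GG(w)$ onto the free group $F_{H,v}$ of rank $t$ and $P_{H,v}=\pi(C_H(w))$ is a finite index subgroup of $F_{H,v}$ by Lemma \ref{l4-1}. Since a subgroup of index $n$ in a free group of rank $t$ has rank $1+n(t-1)$, we get
\begin{equation}\label{eq:NS-plan}
\operatorname{rank}(P_{H,v})-1=(t-1)\,|F_{H,v}:P_{H,v}|,
\end{equation}
and likewise $\operatorname{rank}(P_{H',v'})-1=(t'-1)\,|F_{H',v'}:P_{H',v'}|$ on the primed side. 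Hence \eqref{index} will follow once I show $\operatorname{rank}(P_{H,v})=\operatorname{rank}(P_{H',v'})$, and as both are free groups it is enough to exhibit an abstract isomorphism $P_{H,v}\cong P_{H',v'}$.

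To build this isomorphism I would transport the centraliser isomorphism coming from $\varphi$ through the maps $\pi$, $\pi'$. Fix $w$ as above and set $w_0=\overline{\varphi}(w)$, so $\gamma_{H'}(w_0)=\varphi_*(v)=v'$ and $u'=\delta_{H'}(v')$ has degree $t'$, whence $F_{H',v'}$ is free of rank $t'$. Taking $k$ minimal with $w^k\in H$, the construction of $\overline{\varphi}$ in Lemma \ref{l2} gives $\varphi(w^k)=w_0^l$ with $l$ minimal such that $w_0^l\in H'$. Because $w$ and $w_0$ are conjugates of non-leaf generators, Corollary \ref{cor:centr} yields $C_\GG(w^k)=C_\GG(w)$ and $C_{\GG'}(w_0^l)=C_{\GG'}(w_0)$, hence $C_H(w)=C_H(w^k)$ and $C_{H'}(w_0)=C_{H'}(w_0^l)$. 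Since a group isomorphism carries the centraliser of an element onto the centraliser of its image,
\[
\varphi\bigl(C_H(w)\bigr)=\varphi\bigl(C_H(w^k)\bigr)=C_{H'}\bigl(\varphi(w^k)\bigr)=C_{H'}(w_0^l)=C_{H'}(w_0),
\]
so $\varphi$ restricts to an isomorphism $C_H(w)\cong C_{H'}(w_0)$.

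Finally I would descend this isomorphism to the subgroups $P$. An isomorphism maps centres to centres, and by the proof of Lemma \ref{l4-1} the centre $Z(C_H(w))=\langle w^j\rangle$ is exactly $\ker(\pi|_{C_H(w)})$, with $\pi_C$ identifying $C_H(w)/Z(C_H(w))$ with $P_{H,v}$; the same holds on the primed side. Therefore the restriction of $\varphi$ induces an isomorphism of quotients $C_H(w)/Z(C_H(w))\cong C_{H'}(w_0)/Z(C_{H'}(w_0))$, i.e.\ $P_{H,v}\cong P_{H',v'}$, which gives the required rank equality; combined with \eqref{eq:NS-plan} this proves \eqref{index}. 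I expect the Schreier formula and the rank computation to be entirely routine; the only genuinely delicate point is matching the minimal powers $k$ and $l$ so that $\varphi$ carries $C_H(w)$ precisely onto $C_{H'}(w_0)$ rather than onto the centraliser of some unrelated power, and this is already secured by the explicit minimality built into the construction of $\overline{\varphi}$.
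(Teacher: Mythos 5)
Your proof is correct and follows essentially the same route as the paper's own argument: both reduce \eqref{index} to the rank equality $\operatorname{rk}(P_{H,v})=\operatorname{rk}(P_{H',v'})$ via the Schreier index formula, and both obtain that equality by showing $\varphi$ restricts to an isomorphism $C_H(w)\cong C_{H'}(\overline{\varphi}(w))$ and descending through the quotient-by-centre identification of Lemma \ref{l4-1}. Your write-up is in fact slightly more careful than the paper's (which leaves the matching of minimal powers and the equality $C_H(w)=C_H(w^k)$ implicit), but the underlying argument is identical.
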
	
	\begin{proof}
		Let $w \in \widetilde{\Delta}^e$ be such that $\gamma_H(w)=v$, and think of $w$ as an element of $\GG$. Let $l$ be such positive integer that $w^l \in H$. Then
		$$
		C_H(w)=C_H(w^l) \cong C_{H'}(\varphi(w^l))=C_{H'}(\varphi(w)).
		$$
		Note that $\gamma_{H'}(\varphi(w))=v'$. Thus,
		$$
		P_{H,v} \cong C_H(w) / Z(C_H(w)) \cong  C_{H'}(\varphi(w)) / Z(C_{H'}(\varphi(w))) \cong P_{H',v'}.
		$$
		In particular, $rk(P_{H,v})=rk(P_{H',v'})$. Then, by Schreier index formula,
		$$
		|F_{H,v}:P_{H,v}| (rk(F_{H,v}) - 1)=rk(P_{H,v}) - 1=rk(P_{H',v'}) - 1=  |F_{H',v'}:P_{H',v'}| (rk(F_{H',v'}) - 1).
		$$
		This implies (\ref{index}).
	\end{proof}

	Combining the two previous lemmas, we establish a linear relation between the minimal quotient exponents that we record in the following corollary.
	
	\begin{corollary} \label{cor:relminqexp}
		Let $v$ be a vertex of $\Psi(H)$, and let $v'=\varphi_*(v)$. Let $\delta_H(v)=u$ and $\delta_{H'}(v')=u'$, and suppose $u$ has degree $t$ in $\Delta$, and $u'$ has degree $t'$ in $\Delta'$.
		Let $e_1, \ldots, e_k$ be all edges of $\Psi(H)$ incident to $v$, where each $e_i$ connects $v$ with the vertex $w_i$, and let $e'_i = \varphi_*(e_i)$, $w'_i = \varphi_*(w_i)$, $i=1, \ldots, k$. Let also $r$ be the number of non-leaf vertices adjacent to $u$ in $\Delta$, and $r'$ be the number of non-leaf vertices adjacent to $u'$ in $\Delta'$.
		Then
		$$
		\frac{t - 1}{r} \: \sum_{i=1}^k l_{w_i}(e_i) = \frac{t' - 1}{r'} \: \sum_{i=1}^k l_{w'_i}(e'_i).
		$$
	\end{corollary}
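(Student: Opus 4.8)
The plan is to derive the corollary by directly combining Lemma \ref{lll} and Lemma \ref{equations1}; no new idea is needed beyond careful bookkeeping of the correspondences induced by $\varphi_*$.

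First I would apply the summation formula (\ref{sum1}) of Lemma \ref{lll} to the subgroup $H$ at the vertex $v$. Since $e_1, \ldots, e_k$ are exactly the edges of $\Psi(H)$ incident to $v$ and $r$ is the number of non-leaf vertices adjacent to $u=\delta_H(v)$ in $\Delta$, this reads $\sum_{i=1}^k l_{w_i}(e_i) = r\,|F_{H,v}:P_{H,v}|$, and hence $|F_{H,v}:P_{H,v}| = \frac{1}{r}\sum_{i=1}^k l_{w_i}(e_i)$. Applying the same formula to $H'$ at the vertex $v'=\varphi_*(v)$, and using that $\varphi_*$ is a graph isomorphism (Lemma \ref{l2}) so that the edges incident to $v'$ are precisely $e'_1, \ldots, e'_k$ with $e'_i$ joining $v'$ to $w'_i$, gives $|F_{H',v'}:P_{H',v'}| = \frac{1}{r'}\sum_{i=1}^k l_{w'_i}(e'_i)$.

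Next I would invoke the index relation (\ref{index}) of Lemma \ref{equations1}, namely $(t-1)\,|F_{H,v}:P_{H,v}| = (t'-1)\,|F_{H',v'}:P_{H',v'}|$; this is the step where the isomorphism $\varphi$ enters, through the equality of ranks $rk(P_{H,v})=rk(P_{H',v'})$ together with the Schreier index formula. Substituting the two expressions for the indices obtained above directly yields $\frac{t-1}{r}\sum_{i=1}^k l_{w_i}(e_i) = \frac{t'-1}{r'}\sum_{i=1}^k l_{w'_i}(e'_i)$, which is exactly the claimed identity.

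The argument is essentially a substitution, so there is no genuine obstacle; the only points requiring attention are (i) confirming that $\varphi_*$ matches the $k$ edges at $v$ bijectively with the $k$ edges at $v'$ and that the labels $l_{w'_i}(e'_i)$ are precisely those entering the $H'$-version of (\ref{sum1}), both of which follow from Lemma \ref{l2}, and (ii) keeping the degree $t$ of $u$ in $\Delta$ carefully distinct from the number $r$ of its non-leaf neighbours, since these coincide only when $u$ has no adjacent hair vertices.
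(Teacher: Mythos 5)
Your proposal is correct and follows exactly the paper's own proof: both apply equation (\ref{sum1}) of Lemma \ref{lll} at $v$ for $H$ and at $v'=\varphi_*(v)$ for $H'$ (using Lemma \ref{l2} to match the incident edges), and then substitute into the index relation (\ref{index}) of Lemma \ref{equations1}. The bookkeeping points you flag, in particular that $\varphi_*$ bijects the edges at $v$ with those at $v'$, are precisely the ones the paper's proof records.
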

	\begin{proof}
		Since $\varphi_*$ is a graph isomorphism, see Lemma \ref{l2}, $e'_1, \ldots, e'_k$ are all edges of $\Psi(H')$ incident to $v'$. By Lemma \ref{lll} we have
		$$
		\sum_{i=1}^k l_{w_i}(e_i) = r|F_{H,v}:P_{H,v}|, \quad \sum_{i=1}^k l_{w_i'}(e'_i)= r'|F_{H',v'}:P_{H',v'}|.
		$$
		The statement now follows from Lemma \ref{equations1}.
	\end{proof}
	
	\subsubsection{Linear relations between minimal exponents} \label{sec:minexp}
		
We recall the definitions of labels of vertices and edges, see Definition \ref{defn:minexps}.

Let $f$ be an edge of $\widetilde{\Delta}^e$ incident to a vertex $w$, and $f'=\overline{\varphi}(f), \: w'=\overline{\varphi}(w)$, where $\overline{\varphi}$ is as in Lemma \ref{l2}. By $\overline{L}(w)$, $\overline{L'}(w)$ we denote the labels of the vertices $w$ in $\widetilde{\Delta}^e$, $w'$ in $\widetilde{\Delta}'^e$ respectively. By $\overline{l}_w(f)$ we denote the label of the edge $f$ with respect to $w$ in $\widetilde{\Delta}^e$, and by $\overline{l'}_w(f)$ we denote the label of the edge $f'$ with respect to $w'$ in $\widetilde{\Delta}'^e$.

Analogously, let $p$ be an edge of $\Psi(H)$ incident to a vertex $v$, and $p'=\varphi_*(p), \: v'=\varphi_*(v)$. By $L(v)$, $L'(v)$ we denote the labels of the vertices $v$ in $\Psi(H)$, $v'$ in $\Psi(H')$ respectively. By $l_v(p)$ we denote the label of the edge $p$ with respect to $v$ in $\Psi(H)$, and by $l'_v(p)$ we denote the label of the edge $p'$ with respect to $v'$ in $\Psi(H')$.

\begin{lemma}\label{prop1}
	Let $p$ be an edge of $\Psi(H)$ connecting vertices $v_1$ and $v_2$. Then
	\begin{equation}\label{prop}	
	\frac{L(v_1)}{l_{v_1}(p)}=\frac{L(v_2)}{l_{v_2}(p)}=r,
	\end{equation}	
	where $r$ is some positive integer.
\end{lemma}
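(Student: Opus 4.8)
The plan is to lift the statement to the reduced extension graph and then reduce it to an elementary computation in a rank-two lattice. Fix an edge $f$ of $\widetilde{\Delta}^e$ with $\gamma_H(f)=p$, joining vertices $w_1$ and $w_2$ with $\gamma_H(w_i)=v_i$. By the well-definedness of the labels (Definition \ref{defn:minexps}) we have $L(v_i)=\overline{L}(w_i)$ and $l_{v_i}(p)=\overline{l}_{w_i}(f)$, so it suffices to prove the identity for these. Since $w_1$ and $w_2$ are adjacent in $\widetilde{\Delta}^e$ they commute, and since each is a conjugate of a canonical generator it is its own least root; as $w_1\neq w_2$ (the graph has no loops) and a generator is never conjugate to the inverse of a generator, they cannot both be powers of a common element. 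Hence $\Lambda_0:=\langle w_1,w_2\rangle\cong\BZ^2$ with basis $w_1,w_2$, and writing $w_1^x w_2^y$ as the pair $(x,y)$ identifies $\Lambda_0$ with $\BZ^2$.

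Next I would set $\Lambda:=H\cap\Lambda_0$. Because $H$ has finite index in $\GG$, $\Lambda$ has finite index in $\Lambda_0\cong\BZ^2$; in particular $\Lambda$ meets both coordinate axes nontrivially, so all four quantities below are defined. Unwinding Definition \ref{defn:minexps} gives the dictionary
\begin{align*}
A:=\overline{L}(w_1) &=\min\{x>0 : (x,0)\in\Lambda\}, & a:=\overline{l}_{w_1}(f) &=\min\{x>0 : (x,y)\in\Lambda \text{ for some } y\in\BZ\},\\
B:=\overline{L}(w_2) &=\min\{y>0 : (0,y)\in\Lambda\}, & b:=\overline{l}_{w_2}(f) &=\min\{y>0 : (x,y)\in\Lambda \text{ for some } x\in\BZ\}.
\end{align*}
Thus $a$ generates the projection $\pi_1(\Lambda)$ of $\Lambda$ to the first coordinate while $A$ generates the intersection $\Lambda\cap(\BZ\times 0)$, and symmetrically $b$ generates $\pi_2(\Lambda)$ while $B$ generates $\Lambda\cap(0\times\BZ)$.

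Finally I would extract both conclusions from this lattice picture. Since $(A,0)\in\Lambda$, its first coordinate $A$ lies in $\pi_1(\Lambda)=a\BZ$, so $a\mid A$ and $r_1:=A/a$ is a positive integer; symmetrically $b\mid B$ and $r_2:=B/b$ is a positive integer. To see $r_1=r_2$, compute the index $[\Lambda_0:\Lambda]$ in two ways using the short exact sequence attached to $\pi_1$ (resp.\ $\pi_2$): projecting by $\pi_1$ gives $[\Lambda_0:\Lambda]=[\BZ:\pi_1(\Lambda)]\cdot[(0\times\BZ):\Lambda\cap(0\times\BZ)]=aB$, while projecting by $\pi_2$ gives $[\Lambda_0:\Lambda]=bA$. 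Hence $aB=bA$, i.e.\ $A/a=B/b$, and setting $r:=r_1=r_2$ yields $L(v_1)/l_{v_1}(p)=A/a=r=B/b=L(v_2)/l_{v_2}(p)$ with $r\in\BN$, as required. The only genuinely delicate points are the reduction itself, namely verifying that $\langle w_1,w_2\rangle$ is free abelian of rank two (which is where the least-root property is used), and matching the edge/vertex labels to the correct lattice invariants; once these are in place the divisibility and the equality of ratios are forced by the single index computation carried out along the two projections.
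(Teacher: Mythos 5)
Your proof is correct, but it takes a genuinely different route from the paper's. Both proofs begin identically, by lifting the edge $p$ to an edge $f$ of $\widetilde{\Delta}^e$ with ends $w_1,w_2$ and reformulating everything in terms of $\overline{L}$ and $\overline{l}$. From there the paper never leaves the level of group elements: it proves the divisibility $\overline{l}_{w_1}(f)\mid\overline{L}(w_1)$ by a Bezout manipulation (if $w_1^{k_0}w_2^{l}\in H$ and $w_1^{k'}w_2^{l'}\in H$, then $w_1^{\gcd(k_0,k')}w_2^{*}\in H$, forcing $\gcd(k_0,k')=k_0$), and then establishes $r_1=r_2$ by proving the two inequalities $r_2\ge r_1$ and $r_1\ge r_2$ via explicit element computations; notably, the paper's argument uses only that $w_1$ and $w_2$ commute, and never needs to know what group they generate. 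You instead identify $\langle w_1,w_2\rangle\cong\BZ^2$ (this is the extra prerequisite of your approach, and your justification via least roots and the impossibility of $w_1=w_2^{-1}$ is sound; alternatively it follows from the paper's Lemma \ref{bs}, since adjacency forces $w_1=v_1^g$, $w_2=v_2^g$ for a common $g$ and adjacent generators $v_1,v_2$, whence $\langle w_1,w_2\rangle=\langle v_1,v_2\rangle^g$), translate the four labels into the two projection indices and the two axis-intersection indices of the sublattice $\Lambda=H\cap\BZ^2$, and obtain both the divisibility and the equality from the single identity $[\Lambda_0:\Lambda]=a B=bA$ computed along the two coordinate projections. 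What your approach buys is conceptual economy: the whole lemma collapses to one standard index formula for sublattices of $\BZ^2$, and the otherwise slightly mysterious integer $r$ acquires a meaning, namely $r=[\Lambda_0:\Lambda]/(ab)$. What the paper's approach buys is self-containedness: it requires no structural facts about $\langle w_1,w_2\rangle$ (torsion-freeness, rank, basis), only commutativity and the minimality built into the definitions of the labels.
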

\begin{proof}
	Let $f$ be some edge of $\widetilde{\Delta}^e$ such that $\gamma_H(f)=p$, and $w_1,w_2$ be the ends of $f$, so that $\gamma_H(w_1)=v_1, \: \gamma_H(w_2)=v_2$. Then we can rewrite (\ref{prop}) as
	\begin{equation}\label{lift}
	\frac{\overline{L}(w_1)}{\overline{l}_{w_1}(f)}=\frac{\overline{L}(w_2)}{\overline{l}_{w_2}(f)}=r.
	\end{equation}
	By definition $\overline{l}_{w_1}(f)$ is the minimal positive integer $k$ such that there exists an integer $l$  such that $w_1^k w_2^l \in H$. Note that if $k'$ is some other positive integer such that there exists an integer $l'$ such that $w_1^{k'} w_2^{l'} \in H$, then $\overline{l}_{w_1}(f)|k'$. Indeed, denote $k_0=\overline{l}_{w_1}(f)$ and let $d$ be the greatest common divisor of $k_0$ and $k'$. Then there exist integers $\alpha, \beta$ such that $\alpha k_0 + \beta k' = d$. We have $w_1^{k_0}w_2^l \in H$ and $w_1^{k'}w_2^{l'} \in H$, so, since $w_1$ and $w_2$ commute,
	$$
	(w_1^{k_0}w_2^l)^{\alpha}(w_1^{k'}w_2^{l'})^{\beta} = w_1^{\alpha k_0 + \beta k'}w_2^{\alpha l + \beta l'} =w_1^d w_2^{\alpha l + \beta l'} \in H,
	$$
	so $d \geq k_0$, but since $d=gcd(k_0,k')$, this means that $d=k_0$ and thus $k_0=\overline{l}_{w_1}(f)|k'$.
	
	Recall that $\overline{L}(w_1)$ is the minimal positive integer $k$ such that $w_1^k \in H$, so $w_1^{\overline{L}(w_1)} \in H$ and it follows from above that $\overline{l}_{w_1}(f)|\overline{L}(w_1)$. Thus, $\frac{\overline{L}(w_1)}{\overline{l}_{w_1}(f)}=r_1$ is a positive integer. Analogously $\frac{\overline{L}(w_2)}{\overline{l}_{w_2}(f)}=r_2$ is a positive integer.
	
	It remains to show that $r_1=r_2$. Indeed, suppose that $l$ is such that $w_1^{\overline{l}_{w_1}(f)}w_2^l \in H$.
	As shown above, this means that $\overline{l}_{w_2}(f)|l$, so let $l=\overline{l}_{w_2}(f) q$.
	We have
	$$
	(w_1^{\overline{l}_{w_1}(f)}w_2^l)^{r_2}=w_1^{\overline{l}_{w_1}(f)r_2} w_2^{lr_2} = w_1^{\overline{l}_{w_1}(f)r_2} w_2^{\overline{l}_{w_2}(f) qr_2} =  w_1^{\overline{l}_{w_1}(f)r_2} w_2^{\overline{L}(w_2)q} \in H.
	$$
	But $w_2^{\overline{L}(w_2)} \in H$, so $w_2^{\overline{L}(w_2)q} \in H$. Thus  $w_1^{\overline{l}_{w_1}(f)r_2} \in H$. This means that $\overline{l}_{w_1}(f)r_2 \geq \overline{L}(w_1)=\overline{l}_{w_1}(f) r_1$, so $r_2 \geq r_1$. Analogous argument shows that $r_1 \geq r_2$. Thus $r_1=r_2$.
\end{proof}

	\begin{lemma}\label{proportions}
		Let $p$ be an edge of $\Psi(H)$ connecting vertices $v_1$ and $v_2$. Then
		\begin{equation}\label{eq3}
		\frac{L(v_1)}{l_{v_1}(p)} = \frac{L'(v_1)}{l'_{v_1}(p)} = \frac{L(v_2)}{l_{v_2}(p)} = \frac{L'(v_2)}{l'_{v_2}(p)}=q,
		\end{equation}
		where $q$ is some positive integer.
	\end{lemma}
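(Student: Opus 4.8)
The plan is to apply Lemma \ref{prop1} twice and then collapse everything into a single index computation in a rank-two lattice. Applying Lemma \ref{prop1} to the edge $p$ in $\Psi(H)$ gives at once
$$
\frac{L(v_1)}{l_{v_1}(p)}=\frac{L(v_2)}{l_{v_2}(p)}=r
$$
for a positive integer $r$, and applying the same lemma inside $\Psi(H')$ to the image edge $p'=\varphi_*(p)$, which joins $\varphi_*(v_1)$ and $\varphi_*(v_2)$, gives
$$
\frac{L'(v_1)}{l'_{v_1}(p)}=\frac{L'(v_2)}{l'_{v_2}(p)}=r'
$$
for a positive integer $r'$. Thus the four quotients in (\ref{eq3}) take only the two values $r$ and $r'$, and the statement reduces to proving $r=r'$.

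To compare $r$ and $r'$ I would lift to the reduced extension graph. Choose an edge $f$ of $\widetilde{\Delta}^e$ with $\gamma_H(f)=p$ and endpoints $w_1,w_2$ satisfying $\gamma_H(w_i)=v_i$, and set $w_i'=\overline{\varphi}(w_i)$. Consider the abelian subgroup $A=\langle w_1,w_2\rangle$. Since $\widetilde{\Delta}^e$ is equivariantly isomorphic to the Bass--Serre tree of the reduced centraliser splitting (Lemma \ref{bs}), and in a tree action the stabiliser of an edge is the intersection of the stabilisers of its endpoints, for adjacent $w_1,w_2$ we have $A=C_{\GG}(w_1)\cap C_{\GG}(w_2)\cong\BZ^2$, the relevant conjugate of the $\BZ^2$ edge group. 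Intersecting with $H$ gives the finite index sublattice $L:=A\cap H=C_H(w_1)\cap C_H(w_2)$, and likewise $L':=A'\cap H'=C_{H'}(w_1')\cap C_{H'}(w_2')$, where $A'=\langle w_1',w_2'\rangle$.

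The key point, and the main obstacle, is that $\varphi$ preserves not merely the abstract isomorphism type of this lattice but also its two coordinate axes. Because $\varphi$ is a group isomorphism it carries centralisers to centralisers, and using $C_H(w_i)=C_H(w_i^{l_i})$ for a power $w_i^{l_i}\in H$ one gets $\varphi(C_H(w_i))=C_{H'}(\varphi(w_i^{l_i}))=C_{H'}(w_i')$; hence $\varphi(L)=L'$. Moreover, by the very construction of $\overline{\varphi}$ in Lemma \ref{l2} one has $\varphi\bigl(w_i^{\overline{L}(w_i)}\bigr)=(w_i')^{\overline{L'}(w_i)}$, so $\varphi$ sends the axis subgroup $\langle w_i\rangle\cap H=\langle w_i^{\overline{L}(w_i)}\rangle$ onto $\langle w_i'\rangle\cap H'=\langle (w_i')^{\overline{L'}(w_i)}\rangle$ for $i=1,2$. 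Consequently, writing $M=\langle w_1^{\overline{L}(w_1)},w_2^{\overline{L}(w_2)}\rangle\le L$ and $M'=\langle (w_1')^{\overline{L'}(w_1)},(w_2')^{\overline{L'}(w_2)}\rangle\le L'$ for the two axis-aligned sublattices, we have $\varphi(M)=M'$. This is the delicate step: it must be checked that $\varphi$ genuinely respects the two generator directions, rather than inducing some isomorphism $\BZ^2\to\BZ^2$ that scrambles them.

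It then remains to identify $r$ with the lattice index $[L:M]$, which is routine. In the basis $w_1,w_2$ of $A\cong\BZ^2$, the number $\overline{L}(w_i)$ is the least positive $i$-th coordinate of a point of $L$ lying on the $i$-th axis, while $\overline{l}_{w_i}(f)$ is the least positive $i$-th coordinate of any point of $L$; a Hermite normal form computation of determinants yields $[\BZ^2:L]=\overline{l}_{w_1}(f)\,\overline{L}(w_2)$ and $[\BZ^2:M]=\overline{L}(w_1)\,\overline{L}(w_2)$, so that
$$
[L:M]=\frac{\overline{L}(w_1)}{\overline{l}_{w_1}(f)}=r
$$
by Lemma \ref{prop1}. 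The identical computation in $L'$ gives $[L':M']=r'$. Since $\varphi$ restricts to an isomorphism $L\to L'$ carrying $M$ onto $M'$, the indices agree, whence $r=r'$ and (\ref{eq3}) follows.
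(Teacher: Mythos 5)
Your proof is correct, and although it shares its outer skeleton with the paper's proof, the decisive step is carried out by a genuinely different mechanism. Both arguments begin identically: Lemma \ref{prop1}, applied in $\Psi(H)$ and in $\Psi(H')$, reduces the statement to the single equality $\overline{L}(w_1)/\overline{l}_{w_1}(f)=\overline{L'}(w_1)/\overline{l'}_{w_1}(f)$ for a lift $f$ of $p$, and both rest on the same key input extracted from the construction of $\overline{\varphi}$ in Lemma \ref{l2}, namely $\varphi\bigl(w_i^{\overline{L}(w_i)}\bigr)=(w_i')^{\overline{L'}(w_i)}$. From that point the paper argues element by element: it forms $y=(w_1^{\overline{l}_{w_1}(f)}w_2^{l})^{m_1m_2}$, where $m_i=\overline{L}(w_i)$, notes that $\varphi(y)$ is an $m_1m_2$-th power in $H'$, and invokes uniqueness of least roots in RAAGs to identify that root explicitly, which yields the inequality $\overline{L'}(w_1)/\overline{l'}_{w_1}(f)\geq\overline{L}(w_1)/\overline{l}_{w_1}(f)$; equality then comes from running the same argument with $\GG$ and $\GG'$ interchanged. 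You instead interpret the common ratio structurally, as the index $[L:M]$ of the axis-aligned sublattice $M=\langle w_1^{\overline{L}(w_1)},w_2^{\overline{L}(w_2)}\rangle$ inside $L=\langle w_1,w_2\rangle\cap H$, and transport this index through $\varphi$: the centraliser identity $\varphi(C_H(w_i))=C_{H'}(w_i')$ gives $\varphi(L)=L'$, the Lemma \ref{l2} fact gives $\varphi(M)=M'$, and equality of indices follows at once, symmetrically, with no inequalities and no appeal to root uniqueness. What your route buys is transparency: it exhibits the integer $q$ of the lemma as a lattice index, visibly invariant under the commensuration, and it removes the asymmetric two-step estimate. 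What it costs is two auxiliary verifications that the paper's version does not need in this form: the identification $C_\GG(w_1)\cap C_\GG(w_2)=\langle w_1,w_2\rangle$ of the edge stabiliser (true by the Centraliser Theorem, since adjacent vertices of a tree have no common neighbour, or via Lemma \ref{bs}), and the Hermite-normal-form computation $[\BZ^2:L]=\overline{l}_{w_1}(f)\,\overline{L}(w_2)$, which implicitly uses the divisibility fact proved inside Lemma \ref{prop1} (all first coordinates of points of $L$ are multiples of the least one). You also correctly identified and discharged the one genuinely delicate point, that $\varphi$ respects the two coordinate axes rather than merely the abstract $\BZ^2$: this is precisely the remark in Lemma \ref{l2} that the exponent $l$ there equals the minimal positive power of $w_0$ lying in $H'$.
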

	\begin{proof}
		We already know from Lemma \ref{prop1} that
		$$
		\frac{L(v_1)}{l_{v_1}(p)} = \frac{L(v_2)}{l_{v_2}(p)}=q_1, \quad  \frac{L'(v_1)}{l'_{v_1}(p)} = \frac{L'(v_2)}{l'_{v_2}(p)}=q_2,
		$$ where $q_1$ and $q_2$ are some positive integers. Thus it suffices to show that $q_1=q_2$, or
		\begin{equation}\label{nnn}
		\frac{L(v_1)}{l_{v_1}(p)} = \frac{L'(v_1)}{l'_{v_1}(p)}.
		\end{equation}
		Let $f$ be some edge in $\widetilde{\Delta}^e$ such that $\gamma_H(f)=p$, and $w_1,w_2$ be the ends of $f$, so that $\gamma_H(w_1)=v_1, \: \gamma_H(w_2)=v_2$. Let also $w_1'=\overline{\varphi}(w_1), \: w_2'=\overline{\varphi}(w_2)$. Then we can rewrite (\ref{nnn}) as
		\begin{equation}\label{lift2}
		\frac{\overline{L}(w_1)}{\overline{l}_{w_1}(f)}=\frac{\overline{L'}(w_1)}{\overline{l'}_{w_1}(f)}.
		\end{equation}
		
		Recall that $\overline{l}_{w_1}(f)$	is the minimal positive integer $k$ such that there exists an integer $l$  such that $w_1^k w_2^l \in H$. Fix such number $l$. Denote $\overline{L}(w_1)=m_1$, $\overline{L}(w_2)=m_2$, $\overline{L'}(w_1)=m'_1$, $\overline{L'}(w_2)=m'_2$. Then $w_1^{\overline{l}_{w_1}(f)} w_2^l \in H$, and the element $y=(w_1^{\overline{l}_{w_1}(f)} w_2^l)^{m_1m_2}$ is a $m_1m_2$ power in $H$. Note that
		$$
		y = (w_1^ {m_1})^{\overline{l}_{w_1}(f)m_2} \: (w_2^{m_2})^{lm_1}.
		$$
		Note also that $w_1^{m_1}, w_2^{m_2} \in H$ by definition of labels, and $\varphi(w_1^{m_1})=(w_1')^{m'_1}$, $\varphi(w_2^{m_2})=(w_2')^{m'_2},$ as shown in the proof of Lemma \ref{l2}. Thus
		$$
		\varphi(y)=((w_1')^ {m_1'})^{\overline{l}_{w_1}(f)m_2} \: ((w_2')^{m_2'})^ {lm_1}
		$$
		is a $m_1m_2$ power in $H'$: $\varphi(y)=z^{m_1m_2}$ for some $z \in H'$. But roots in RAAGs are unique, see Section \ref{sec:prelim}, so
		$$
		z = (w_1')^ {m_1'\overline{l}_{w_1}(f)/m_1} \: (w_2')^{m_2' l/m_2} \in H'.
		$$
		By definition of $\overline{l'}_{w_1}(f)$, this means that $m_1'\overline{l}_{w_1}(f)/m_1 \geq 	\overline{l'}_{w_1}(f)$, so
		$$ \frac{\overline{L'}(w_1)}{\overline{l'}_{w_1}(f)} \geq \frac{\overline{L}(w_1)}{\overline{l}_{w_1}(f)}.
		$$
		The same argument with the roles of $\GG$ and $\GG'$ interchanged shows that
		$$
		\frac{\overline{L}(w_1)}{\overline{l}_{w_1}(f)} \geq \frac{\overline{L'}(w_1)}{\overline{l'}_{w_1}(f)}.
		$$
		Thus
		$$
		\frac{\overline{L}(w_1)}{\overline{l}_{w_1}(f)} = \frac{\overline{L'}(w_1)}{\overline{l'}_{w_1}(f)}.
		$$
		This proves the Lemma.
	\end{proof}
	
	The above lemmas imply that the labels of edges satisfy the following equations.
	\begin{lemma}\label{together}
		Suppose $v$ is some vertex of $\Psi(H)$, and $v'=\varphi_*(v)$, where $\varphi_*$ is the graph isomorphism between $\Psi(H)$ and $\Psi(H')$ from {\rm Lemma \ref{l2}}. Suppose $\delta_H(v)$ has degree $t$ in $\Delta$ and is adjacent to $r$ vertices of degree more than 1 in $\Delta$, and $\delta_{H'}(v')$ has degree $t'$ in $\Delta'$, and is adjacent to $r'$ vertices of degree more than 1 in $\Delta'$.
		Let $f_1, \ldots, f_k$ be all edges of $\Psi(H)$ adjacent to $v$, and suppose $\alpha_i=l_v(f_i)$, $\alpha'_i=l'_v(f_i)$, $i=1, \ldots, k$, are their labels.
		Then we have
		\begin{equation}\label{eq1}
		\frac{t-1}{r} \sum_{i=1}^k \alpha_i L(v_i) = \frac{t'-1}{r'} \sum_{i=1}^k \alpha_i L'(v_i),
		\end{equation}
		\begin{equation}\label{eq2}
		\frac{t-1}{r} \sum_{i=1}^k \alpha'_i L(v_i) = \frac{t'-1}{r'} \sum_{i=1}^k \alpha'_i L'(v_i).
		\end{equation}
	\end{lemma}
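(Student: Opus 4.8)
The plan is to derive both (\ref{eq1}) and (\ref{eq2}) directly by combining Corollary \ref{cor:relminqexp}, which gives a linear relation between the minimal \emph{quotient} exponents at the far ends of the edges at $v$, with Lemma \ref{proportions}, which ties each vertex label (a minimal exponent) to the corresponding edge label (a minimal quotient exponent) through a single proportionality constant per edge. The essential idea is that the sums in (\ref{eq1}) and (\ref{eq2}) are written in terms of minimal exponents $L(v_i),L'(v_i)$ and near-end edge labels $\alpha_i=l_v(f_i)$, $\alpha_i'=l'_v(f_i)$, whereas Corollary \ref{cor:relminqexp} is phrased with far-end edge labels $l_{v_i}(f_i)$, $l'_{v_i}(f_i)$; Lemma \ref{proportions} is exactly the dictionary that converts between the two.

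Concretely, I would fix an edge $f_i$ of $\Psi(H)$ joining $v$ to $v_i$ and apply Lemma \ref{proportions} with $p=f_i$, $v_1=v$, $v_2=v_i$. This yields a positive integer $q_i$ (depending on $f_i$) with
\begin{equation*}
\frac{L(v)}{\alpha_i}=\frac{L'(v)}{\alpha_i'}=\frac{L(v_i)}{l_{v_i}(f_i)}=\frac{L'(v_i)}{l'_{v_i}(f_i)}=q_i,
\end{equation*}
so that $L(v)=q_i\alpha_i$, $L'(v)=q_i\alpha_i'$, $L(v_i)=q_i\,l_{v_i}(f_i)$ and $L'(v_i)=q_i\,l'_{v_i}(f_i)$. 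Cancelling $q_i$ between the vertex relations then gives, for every $i$, the four identities
\begin{equation*}
\alpha_i L(v_i)=L(v)\,l_{v_i}(f_i),\quad \alpha_i L'(v_i)=L(v)\,l'_{v_i}(f_i),\quad \alpha_i' L(v_i)=L'(v)\,l_{v_i}(f_i),\quad \alpha_i' L'(v_i)=L'(v)\,l'_{v_i}(f_i).
\end{equation*}

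Substituting the first two identities termwise into (\ref{eq1}) and factoring the edge-independent scalar $L(v)$ out of both sides reduces (\ref{eq1}) to
\begin{equation*}
\frac{t-1}{r}\sum_{i=1}^{k} l_{v_i}(f_i)=\frac{t'-1}{r'}\sum_{i=1}^{k} l'_{v_i}(f_i),
\end{equation*}
which is precisely Corollary \ref{cor:relminqexp} (taking $e_i=f_i$, $w_i=v_i$, so that $l_{w_i}(e_i)=l_{v_i}(f_i)$ and $l_{w_i'}(e_i')=l'_{v_i}(f_i)$). Equation (\ref{eq2}) is treated identically using the last two identities, the only change being that the common scalar pulled out of both sides is now $L'(v)$ rather than $L(v)$; after cancellation the very same instance of Corollary \ref{cor:relminqexp} closes the argument.

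I do not expect a genuine obstacle here: the content is entirely supplied by the two cited results, and the work is bookkeeping. The one point that needs care is keeping the near-end labels $\alpha_i=l_v(f_i)$ distinct from the far-end labels $l_{v_i}(f_i)$ summed in Corollary \ref{cor:relminqexp}, and noting that it is exactly the single per-edge constant $q_i$ from Lemma \ref{proportions} that allows one to trade between them while producing a uniform factor ($L(v)$ for (\ref{eq1}), $L'(v)$ for (\ref{eq2})) that cancels across the whole equation. No additional input beyond Lemma \ref{proportions} and Corollary \ref{cor:relminqexp} is required.
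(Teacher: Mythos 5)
Your proof is correct and is essentially the paper's own argument: both rest on Corollary \ref{cor:relminqexp} together with the cross-multiplication identities $\alpha_i L(v_i)=L(v)\,l_{v_i}(f_i)$, etc., supplied by Lemma \ref{proportions}, with $L(v)$ (resp.\ $L'(v)$) serving as the uniform factor relating (\ref{eq1}) (resp.\ (\ref{eq2})) to the quotient-exponent relation. The only cosmetic difference is direction: the paper multiplies the corollary by $L(v)$ and $L'(v)$ and substitutes forward, while you substitute into (\ref{eq1}), (\ref{eq2}) and cancel the common factor to reduce back to the corollary.
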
	
	
	\begin{proof}
		Let the edge $f_i$ connect $v$ to $v_i$, $i=1,\ldots,k$. By Corollary \ref{cor:relminqexp}, we have that
		$$
		\frac{t - 1}{r} \: \sum_{i=1}^k l_{v_i}(f_i) = \frac{t' - 1}{r'} \: \sum_{i=1}^k l'_{v_i}(f_i).
		$$
		 Multiplying both sides by $L(v)$, we obtain
		\begin{equation}\label{qw}
		\frac{t - 1}{r} \: \sum_{i=1}^k l_{v_i}(f_i)L(v) = \frac{t' - 1}{r'} \: \sum_{i=1}^k l'_{v_i}(f_i)L(v).
		\end{equation}
		 Multiplying both sides by $L'(v)$, we obtain
		\begin{equation}\label{qw'}
		\frac{t - 1}{r} \: \sum_{i=1}^k l_{v_i}(f_i)L'(v) = \frac{t' - 1}{r'} \: \sum_{i=1}^k l'_{v_i}(f_i)L'(v).
		\end{equation}
		According to Lemma \ref{proportions}, we have
		$$
		\frac{L(v)}{l_{v}(f_i)} = \frac{L'(v)}{l'_v(f_i)} = \frac{L(v_i)}{l_{v_i}(f_i)} = \frac{L'(v_i)}{l'_{v_i}(f_i)}
		$$
		for all $i=1,\ldots,k$, so
		$$
		l_{v_i}(f_i)L(v)= L(v_i)l_v(f_i), \quad l'_{v_i}(f_i)L(v)=L'(v_i)l_v(f_i),
		$$
		$$
		l_{v_i}(f_i)L'(v)= L(v_i)l'_v(f_i), \quad l'_{v_i}(f_i)L'(v)=L'(v_i)l'_v(f_i).
		$$
		Thus (\ref{qw}), (\ref{qw'}) imply (\ref{eq1}), (\ref{eq2}).
	\end{proof}

\begin{defn}[Type of a vertex]\label{type} If $v$ is a vertex of $\Psi(H)$ such that $\delta_H(v)=u_1$, and $\delta_{H'}(\varphi_*(v))=u_2$, then we say that $v$ {\it is a vertex of type $u_1/u_2$}, or $v$ {\it is a $u_1/u_2$-type vertex}. Note that a vertex of $\Psi(H)$ of type $u_1/u_2$ can only be adjacent to a vertex of $\Psi(H)$ of type $v_1/v_2$ if $u_1$ and $v_1$ are adjacent in $\Delta$ and $u_2$ and $v_2$ are adjacent in $\Delta'$.
\end{defn}
	
\subsection{Infinitely many commensurability classes of RAAGs defined by trees of diameter 4} \label{Sec:InfCom}

	We now turn our attention to the proofs of the main results. We first show that there are infinitely many different commensurability classes inside the class of RAAGs defined by trees of diameter 4. This is a particular case of Theorem \ref{th3} , but we give a separate proof for two reasons: the first one, it is technically easier and it helps as a warm up for the general case; secondly, for the reader interested only in the qualitative aspect of the result, it suffices to read our Theorem \ref{th2}.
	
	For $0<d_1 \leq d_2$ we denote by $P_{d_1,d_2}$ the tree of diameter 4 with 2 pivots, one of degree $d_1$ and the other of degree $d_2$, and no hair vertices, see Figure \ref{fig:tree2}. Thus $P_{d_1,d_2}=T((d_1,1),(d_2,1);0)$ if $d_1 < d_2$ and $P_{d_1,d_1}=T((d_1,2);0)$.
	
		\begin{figure}[!h]
		\centering
		\includegraphics[keepaspectratio,width=3.2in]{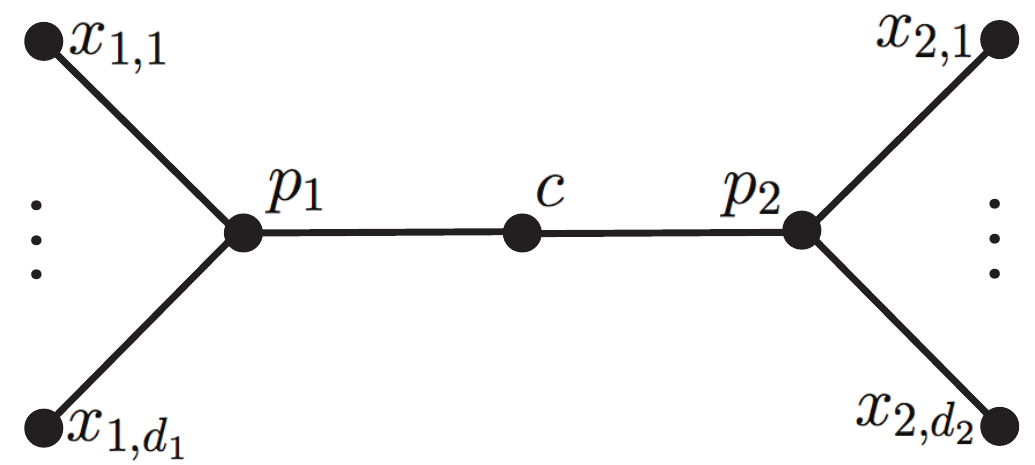}
		\caption{Tree of diameter 4 with 2 pivots} \label{fig:tree2}
	\end{figure}

	\begin{theorem}[Infinitely many commensurability classes]\label{th2}
		Let $0<m_1 \leq m_2$ and $0<n_1 \leq n_2$. Suppose the groups $\GG=\mathbb{G}(P_{m_1,m_2})$ and $\GG'=\mathbb{G}(P_{n_1,n_2})$ are commensurable. Then $\frac{m_1}{n_1}=\frac{m_2}{n_2}$.
		
		In particular, there are infinitely many commensurability classes among RAAGs defined by trees of diameter 4.
	\end{theorem}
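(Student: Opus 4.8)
The plan is to extract, from the hypothesis of commensurability, the per-vertex index relations of Lemma~\ref{equations1} and Lemma~\ref{together}, and to assemble them into a linear system over the finite quotient graph $\Psi(H)$ whose solvability in positive integers already forces the ratio $m_1:m_2$.

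First I would unwind the hypothesis. If $\GG=\GG(P_{m_1,m_2})$ and $\GG'=\GG(P_{n_1,n_2})$ are commensurable there are finite index subgroups $H\le\GG$, $H'\le\GG'$ and an isomorphism $\varphi\colon H\to H'$. The reduced centraliser splitting of $\GG$ has underlying graph the path $p_1-c-p_2$: the centre $c$ has degree $2$ and is adjacent to the two non-leaf vertices $p_1,p_2$, while each pivot $p_i$ has degree $m_i+1$ and is adjacent to exactly one non-leaf vertex, namely $c$. Thus the three vertex types carry the weights $t-1$ equal to $1$ at the centre and to $m_1,m_2$ at the two pivots (and $1,n_1,n_2$ on the $\GG'$ side). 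By Lemma~\ref{l2} the isomorphism $\varphi$ induces a graph isomorphism $\varphi_*\colon\Psi(H)\to\Psi(H')$, and by Lemma~\ref{equations1} (equivalently Corollary~\ref{cor:relminqexp}) every vertex $v$ of $\Psi(H)$ satisfies
\begin{equation*}
(t-1)\,|F_{H,v}:P_{H,v}|=(t'-1)\,|F_{H',v'}:P_{H',v'}|,\qquad v'=\varphi_*(v),
\end{equation*}
where $t,t'$ are the degrees of $\delta_H(v),\delta_{H'}(v')$; equivalently the isomorphism invariant $\mathrm{rk}(P_{H,v})-1$ equals its primed counterpart.

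Next I would pin down the type correspondence. The reduced extension tree $\widetilde{\Delta}^e$ is bipartite, and since it is a connected tree the bipartition separating conjugates of the centre from conjugates of the pivots is unique; as $\Psi(H)$ is connected, $\varphi_*$ either preserves or interchanges this partition. In the preserving case centre type vertices (for which $t-1=1$) correspond to centre type vertices, so $|F_{H,v}:P_{H,v}|=|F_{H',v'}:P_{H',v'}|$ there, and the two pivot types are matched, say $p_1\mapsto p_1'$ and $p_2\mapsto p_2'$ after relabelling, in the sense of Definition~\ref{type}; the displayed relation at a $p_i$ type vertex then reads $m_i|F_{H,v}:P_{H,v}|=n_i|F_{H',v'}:P_{H',v'}|$. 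I would argue that the interchanging case either cannot occur or produces the relations with the roles of the centre weight $1$ and the pivot weights exchanged, to be disposed of in the same way.

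Finally, these relations, together with the edge label proportionalities of Lemma~\ref{prop1} and Lemma~\ref{proportions} and the constraint that all labels and indices are positive integers, constitute the linear system $S(P_{m_1,m_2},P_{n_1,n_2})$. The centres glue the two pivot relations through the common graph $\Psi(H)\cong\Psi(H')$, and I expect that demanding a simultaneous positive integer solution forces the two ``stretches'' $\tfrac{m_1}{n_1}$ and $\tfrac{m_2}{n_2}$, measured at $p_1$ and $p_2$ against the fixed centre weight, to coincide, that is $m_1n_2=m_2n_1$, which is the assertion of Theorem~\ref{th2}. Applying this to the trees $P_{1,k}$ with $k\ge 2$ yields infinitely many pairwise non-proportional sets $\{1,k\}$, hence infinitely many commensurability classes. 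The hard part is exactly this last step: the bare tree $\widetilde{\Delta}^e$ is type homogeneous and retains no graph theoretic trace of the weights $m_i$, so the proportionality cannot come from the large-scale geometry and must be squeezed out of the integrality of the indices and the gluing at the centres --- the ``just a linear algebra problem'' flagged in the introduction.
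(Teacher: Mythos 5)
Your setup faithfully reproduces the paper's framework: pass to isomorphic finite index subgroups, invoke Lemma~\ref{l2} for the graph isomorphism $\varphi_*\colon\Psi(H)\to\Psi(H')$, extract the index relation of Lemma~\ref{equations1} at each vertex, and combine with the label identities of Lemmas~\ref{lll}, \ref{prop1}, \ref{proportions}, \ref{together}. Your bipartition dichotomy (centre/pivot classes preserved or interchanged) is also exactly the paper's case division, derived there from the adjacency constraints in Definition~\ref{type}. However, there is a genuine gap at the key step. In the preserving case you assert that ``the two pivot types are matched, say $p_1\mapsto p_1'$ and $p_2\mapsto p_2'$ after relabelling.'' Nothing justifies this uniformity: the type of Definition~\ref{type} is assigned \emph{vertex by vertex} in $\Psi(H)$, and neither Lemma~\ref{l2} nor the bipartition argument prevents two different vertices of $\Psi(H)$ lying over the same pivot $p_1$ from being sent to vertices lying over \emph{different} pivots of $\Delta'$. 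A priori all four types $p_1/p_1'$, $p_1/p_2'$, $p_2/p_1'$, $p_2/p_2'$ occur simultaneously, and handling this mixture is precisely the mathematical content of the paper's Case~1: one sums the relations (\ref{s}) over all vertices of each of the four types, invokes the Schreier-graph identity (\ref{rrr}) (at every $c/c'$-vertex the sums of edge labels towards $p_1$-type and towards $p_2$-type ends agree, both being the index $|F_{H,w_q}:P_{H,w_q}|$), and then uses the orderings $m_1\leq m_2$, $n_1\leq n_2$ to obtain $m_2n_1\leq m_1n_2$, with the reverse inequality by symmetry. Under your uniform-matching assumption the theorem is almost immediate, which is a sign that the assumption conceals the difficulty rather than resolves it.

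The second gap is that the conclusion is never actually derived: you write that you ``expect'' positive integrality to force $m_1n_2=m_2n_1$ and explicitly flag this as the hard part. That expectation \emph{is} the theorem; the paper proves it by the summation-and-inequality chain just described, and it needs a separate argument in the interchanging case (your ``to be disposed of in the same way''), where equations (\ref{kkk}), (\ref{rr10}), (\ref{rr20}) force $n_1=n_2$ and, by symmetry, $m_1=m_2$, so the ratio equality holds trivially there. As it stands, your proposal is a correct assembly of the paper's preparatory lemmas, but the engine that converts them into $\frac{m_1}{n_1}=\frac{m_2}{n_2}$ --- the part that cannot be obtained from the quasi-isometry-invariant data and genuinely uses the integer structure --- is missing.
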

	In fact, we will see in the next section that $\GG=\mathbb{G}(P_{m_1,m_2})$ and $\GG'=\mathbb{G}(P_{n_1,n_2})$ are commensurable if and only if $\frac{m_1}{n_1}=\frac{m_2}{n_2}$.
	\begin{proof}[Proof of Theorem \ref{th2}]
		Let $c$ be the center vertex of $P_{m_1,m_2}$ and $c'$ be the center vertex of $P_{n_1,n_2}$. Let $p_1,p_2$ be the pivots of $P_{m_1,m_2}$ of degrees $m_1+1$, $m_2+1$ correspondingly, and $p_1',p_2'$ be the pivots of $P_{n_1,n_2}$ of degrees $n_1+1,n_2+1$ correspondingly.

		Let $H \leq \GG$ and $H' \leq \GG'$ be finite index subgroups such that $H \cong H'$.
		From Lemma \ref{together} and Lemma \ref{lll} applied to both $\Delta=P_{m_1,m_2}$ and $\Delta'=P_{n_1,n_2}$ we have a system of equations on the labels of vertices and edges of $\Psi(H)$. We will show that this system of equations does not have any positive integer solutions unless $\frac{m_1}{n_1}=\frac{m_2}{n_2}$.

		Note that there could exist vertices of the following nine types in $\Psi(H)$:
		 $p_1/p_1', \: p_1/p_2', \: p_2/p_1', \: p_2/p_2',$ $c/p_1', \: c/p_2', \: p_1/c', \: p_2/c', \: c/c'$, see Definition \ref{type}. It also follows from Definition \ref{type} that the following statements hold:
		\begin{itemize}
			\item Every vertex of type $p_1/p_1', \: p_1/p_2', \: p_2/p_1', \: p_2/p_2'$ can be connected only with $c/c'$-type vertices;
			\item Every $c/c'$-type vertex can be connected only with vertices of type $p_1/p_1', \: p_1/p_2', \: p_2/p_1', \: p_2/p_2'$;
			\item Every $p_1/c'$-type and $p_2/c'$-type vertex can be connected only with $c/p_1'$-type and $c/p_2'$-type vertices;
			\item Every $c/p_1'$-type and $c/p_2'$-type vertex can be connected only with $p_1/c'$-type and $p_2/c'$-type vertices.	
		\end{itemize}
		Thus, there are two cases: either $\Psi(H)$ has only vertices of type $p_1/p_1', \: p_1/p_2', \: p_2/p_1', \: p_2/p_2'$ and $c/c'$, or $\Psi(H)$ has only vertices of type $p_1/c'$, $p_2/c'$, $c/p_1'$ and $c/p_2'$.
		
		{\it Case 1.} Suppose first $\Psi(H)$ has only vertices of type $p_1/p_1', \: p_1/p_2', \: p_2/p_1', \: p_2/p_2'$ and $c/c'$.

		Let $v$ be a $p_i/p_j'$-type vertex, where $i=1,2$, $j=1,2$.
		Then all vertices adjacent to $v$ are of type $c/c'$. Let $f_1, \ldots, f_S$ be all edges incident to $v$, and let $f_s$ connect $v$ with $v_s$ for $s=1,\ldots,S$ (possibly some of $v_i$ coincide). 	Then, by Lemma \ref{together}, in the notations of which we have $t=m_i+1, \: r=1, \: t'=n_j+1, \: r'=1$, we have
		\begin{equation}\label{s}
		m_i \: \sum_{s=1}^S L(v_s)l_v(f_s) = n_j \: \sum_{s=1}^S L'(v_s)l_v(f_s).
		\end{equation}
		Let $w_1, \ldots, w_Q$  be all the $c/c'$-type vertices of $\Psi(H)$. For every vertex $w_q, \: q=1,\ldots, Q$,  consider all the edges incident to $w_q$ which finish in $p_i/p_j'$-type vertices (for fixed $i=1,2$ and $j=1,2$), and let $X^q_{ij}$ be the sum of the $l$-labels of these edges at the $p_i/p_j'$-type ends; we let $X^q_{ij}=0$ if there are no such edges.
		It follows from Lemma \ref{lll} that for all $q=1,\ldots, Q$ we have
		\begin{equation}\label{rrr}
		X_{11}^q + X_{12}^q = |F_{H,w_j}: P_{H,w_j}| = X_{21}^q + X_{22}^q. 		
		\end{equation}
		Now sum up the equalities of the form (\ref{s}) for all $p_i/p_j'$-type vertices $v$ (for fixed $i=1,2$ and $j=1,2$), and group the summands with the vertex label corresponding to the same $c/c'$-type vertex together.  We obtain
		\begin{equation}\label{bb'}
		m_1 \: \sum_{q=1}^Q L(w_q)X^q_{11}= n_1 \: \sum_{q=1}^Q L'(w_q)X^q_{11},
		\end{equation}
		\begin{equation}\label{db'}
		m_1 \: \sum_{q=1}^Q L(w_q)X^q_{12}= n_2 \: \sum_{q=1}^Q L'(w_q)X^q_{12},
		\end{equation}
		\begin{equation}\label{dd'}
		m_2 \: \sum_{q=1}^Q L(w_q)X^q_{21}= n_1 \: \sum_{q=1}^Q L'(w_q)X^q_{21},
		\end{equation}
		\begin{equation}\label{bd'}
		m_2 \: \sum_{q=1}^Q L(w_q)X^q_{22}= n_2 \: \sum_{q=1}^Q L'(w_q)X^q_{22}.
		\end{equation}
		
		Summing (\ref{bb'}) and (\ref{db'}) and using that $n_1 \leq n_2$, we get
		\begin{equation}\label{fff1}
		m_1 \: \sum_{q=1}^Q L(w_j)(X^q_{11}+X^q_{12})=n_1 \: \sum_{q=1}^Q L'(w_q)X^q_{11} + n_2 \:
		\sum_{q=1}^Q L'(w_q)X^q_{12} \geq n_1 \: \sum_{q=1}^Q L'(w_q)(X^q_{11}+X^q_{12}).
		\end{equation}
		Analogously, summing (\ref{bd'}) and (\ref{dd'}) and using that $n_1 \leq n_2$, we get
		\begin{equation}\label{kkk1}
		m_2 \: \sum_{q=1}^Q L(w_q)(X^q_{21}+X^q_{22}) = n_1 \: \sum_{q=1}^Q L'(w_q)X^q_{21} + n_2 \:
		\sum_{q=1}^Q L'(w_q)X^q_{22} \leq n_2 \:  \sum_{q=1}^Q L'(w_q)(X^q_{21} + X^q_{22}).
		\end{equation}
		Multiplying (\ref{fff1}) by $m_2$ and (\ref{kkk1}) by $m_1$ and using (\ref{rrr}), we obtain
		\begin{gather} \notag
		\begin{split}
		m_2 n_1 \: \sum_{q=1}^Q L'(w_q)(X^q_{11}+X^q_{12}) &\leq m_2 m_1 \: \sum_{q=1}^Q L(w_q)(X^q_{11}+X^q_{12})\\
		 & = m_1 m_2 \: \sum_{q=1}^Q L(w_q)(X^q_{21}+X^q_{22}) \leq m_1 n_2 \sum_{q=1}^Q L'(w_q)(X^q_{21} + X^q_{22})\\
		 &= m_1 n_2 \: \sum_{q=1}^Q L'(w_q)(X^q_{11}+X^q_{12}).
		\end{split}
		\end{gather}
		
		Since $L'(w_q)>0$ by definition and $X^q_{11}+X^q_{12} > 0$ by (\ref{rrr}) for every $q=1,\ldots,Q$, it follows that $m_2n_1\leq m_1n_2$, or $\frac{m_2}{m_1} \leq \frac{n_2}{n_1}$.
		
		Note that we can apply the same argument with the roles of $\GG$ and $\GG'$ interchanged, and thus get  $\frac{n_2}{n_1} \leq \frac{m_2}{m_1}$. This means that  $\frac{m_2}{m_1} = \frac{n_2}{n_1}$, so $\frac{m_1}{n_1}=\frac{m_2}{n_2}$, as desired.
		
		{\it Case 2.} Now suppose that $\Psi(H)$ has only vertices of type $p_1/c'$, $p_2/c'$, $c/p_1'$ and $c/p_2'$.
		
		Let $v$ be a $c/p_j'$-type vertex of $\Psi(H)$, where $j=1$ or $j=2$. Then all vertices adjacent to $v$ are of type $p_1/c'$ or $p_2/c'$. Let $f_1, \ldots,f_S$ be all edges incident to $v$, and let $f_s$ connect $v$ with $v_s$ for $s=1,\ldots,S$ (possibly some of the vertices $v_s$ coincide).
		Then, by Lemma \ref{together}, in the notations of which we have $t = 2, \: t' = n_j +1, \: r = 2, \: r' = 1$, we obtain
		\begin{equation}\label{s2}
		\: \sum_{s=1}^S L(v_s)l'_v(f_s) = 2n_j \: \sum_{s=1}^S L'(v_s)l'_v(f_s).
		\end{equation}

		Let $w_1, \ldots, w_Q$ be all the $p_1/c'$-type and $p_2/c'$-type vertices of $\Psi(H)$.  For every vertex $w_q, \: q=1,\ldots,Q$, consider all the edges incident to $w_q$ which finish in $c/p_1'$-type vertices (note that such edges always exist), and let $Z^q_1>0$ be the sum of the $l'$-labels of these edges at the $c/p_1'$-type ends. Analogously, consider all the edges incident to $w_q$ which finish in $c/p_2'$-type vertices (note that such edges should always exist), and let $Z^q_2>0$ be the sum of the $l'$-labels of these edges at the $c/p_2'$-type ends.
		Let $w_q'=\varphi_*(w_q)$ for  $q=1,\ldots, Q$.
		Then it follows from Lemma \ref{lll} that for every $q=1,\ldots,Q$ we have
		\begin{equation}\label{kkk}	
		|F_{H', w'_q}: P_{H', w'_q}|=Z^q_1=Z^q_2.
		\end{equation}
		Now sum up the equalities of the form (\ref{s2}) for all $c/p_1'$-type vertices $v$, and group the summands with the vertex label corresponding to the same $p_i/c'$-type vertex together.  We obtain
		\begin{equation}\label{rr10}
		\sum_{q=1}^Q L(w_q)Z^q_1= 2n_1 \: \sum_{q=1}^Q L'(w_q)Z^q_1.
		\end{equation}
		Analogously, summing the equalities of the form (\ref{s2}) for all $c/p_2'$-type vertices $v$ we get
		\begin{equation}\label{rr20}
		\sum_{q=1}^Q L(w_q)Z^q_2=  2n_2 \: \sum_{q=1}^Q L'(w_q)Z^q_2.
		\end{equation}
		Together with (\ref{kkk}), equalities (\ref{rr10}) and (\ref{rr20}) imply that $n_1=n_2$.
		
		Note that we can apply the same argument with the roles of $\GG$ and $\GG'$ interchanged, and thus get $m_1=m_2$. It follows that, in particular, $\frac{m_1}{n_1}=\frac{m_2}{n_2}$.

		This proves Theorem \ref{th2}.
	\end{proof}
	
\subsection{Criterion for non-commensurability of RAAGs defined by trees of diameter 4}\label{sec:noncomm}
	
	We now prove the main non-commensurability result of this paper.
	
	\begin{theorem}\label{th3}
		Let $T=T((m_1,1),\ldots, (m_k,1); 0)$ and $T'=T((n_1,1),\ldots, (n_l,1); 0)$ for some $k,l \geq 2$.
		Suppose $\GG=\GG(T)$ and $\GG'=\GG(T')$ are commensurable. Then $M(T)$ and $M(T')$ are commensurable, i.e., $k=l$, and 		
		\begin{equation}\label{mn}
		\frac{n_1}{m_1}=\frac{n_2}{m_2}= \ldots =\frac{n_k}{m_k}.
		\end{equation} 	
		Moreover, if $S=T((m, 2);0)$ for some $m \geq 2$, then $\GG(S)$ and $\GG(T')$ are not commensurable.
	\end{theorem}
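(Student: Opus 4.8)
The plan is to push the strategy of Theorem \ref{th2} to arbitrary $k,l$, encoding the isomorphism $\varphi\colon H\to H'$ between finite index subgroups as a system of linear relations on the labels of $\Psi(H)\cong\Psi(H')$ and then extracting arithmetic. First I form $\Psi(H)$ and classify its vertices by type $u/u'$ (Definition \ref{type}), with $u\in\{c,p_1,\dots,p_k\}$ and $u'\in\{c',p_1',\dots,p_l'\}$ ranging over the non-leaf generators. The adjacency rule of Definition \ref{type} is bipartite in each coordinate, so since $\Psi(H)$ is connected it lies entirely in \emph{Case 1} (only types $c/c'$ and $p_i/p_j'$) or entirely in \emph{Case 2} (only types $c/p_j'$ and $p_i/c'$), exactly as in Theorem \ref{th2}.

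The heart of the argument is to produce, at every pivot-over-pivot vertex, \emph{two} families of relations. Applying Lemma \ref{together} at a vertex of type $p_i/p_j'$ (where $t=m_i+1,\ r=1,\ t'=n_j+1,\ r'=1$), summing over all such vertices, and grouping by the adjacent $c/c'$-vertex $w_q$, equation (\ref{eq1}) yields $m_iA_{ij}=n_jB_{ij}$ and equation (\ref{eq2}) yields $m_i\hat A_{ij}=n_j\hat B_{ij}$. Here $A_{ij},\hat A_{ij}$ are the $L(w_q)$-weighted sums, and $B_{ij},\hat B_{ij}$ the $L'(w_q)$-weighted sums, of the $l$-label totals (for $A,B$) and $l'$-label totals (for $\hat A,\hat B$) of the edges from $w_q$ to the $p_i/p_j'$-vertices. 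The index identities of Lemma \ref{lll} on the $T$-side then give the constant row sums $\sum_jA_{ij}=\Sigma$, $\sum_jB_{ij}=\Sigma'$, and on the $T'$-side the constant column sums $\sum_i\hat A_{ij}=\Theta$, $\sum_i\hat B_{ij}=\Theta'$; Lemma \ref{equations1} links the two through $(k-1)\Sigma=(l-1)\Theta$ and $(k-1)\Sigma'=(l-1)\Theta'$. Crucially all four matrices share the \emph{same} support, since $l$- and $l'$-labels are simultaneously positive on each edge.

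For the first assertion, Case 2 is ruled out by the analogue of Case 2 of Theorem \ref{th2}: at a $c/p_j'$-vertex the $c'$-side index identity makes the relevant label-sums independent of $j$, forcing $\tfrac{k-1}{k}\Theta=n_j\Theta'$ for every $j$, which is incompatible with $n_1<\dots<n_l$ once $l\ge2$. In Case 1, Lemma \ref{equations1} gives $\rho=\sigma=:\lambda$, where $\rho=\Sigma'/\Sigma$ and $\sigma=\Theta'/\Theta$. Reading $m_iA_{ij}=n_jB_{ij}$ and $m_i\hat A_{ij}=n_j\hat B_{ij}$ as weighted averages, row $i$ has $n$-average $m_i/\lambda$ and column $j$ has $m$-average $n_j\lambda$; applied to the largest indices this gives $m_k/\lambda\le n_l$ and $n_l\lambda\le m_k$, hence $m_k=\lambda n_l$, and the top row and top column are each concentrated on the other (an average equals its maximum only when all the weight sits there). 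Peeling off the matched pair $(k,l)$ leaves a system of the same shape on the remaining indices, and an induction yields $k=l$ together with $m_i=\lambda n_i$ for all $i$, which is precisely (\ref{mn}).

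The ``moreover'' is the Case-2 phenomenon moved to the $S$-side: because both pivots of $S=T((m,2);0)$ have degree $m+1$, the coefficient $m_i$ is the \emph{constant} $m$, so in Case 1 equation (\ref{eq2}) with the constant column sums forces $m\Theta=n_j\Theta'$ for all $j$, and in Case 2 the degree-$2$ center $c_S$ produces an analogous $j$-independent relation; either way all $n_j$ coincide, contradicting $n_1<\dots<n_l$ with $l\ge2$. The main obstacle is the bookkeeping of the second paragraph: one must keep the two label-systems genuinely separate—they are \emph{not} proportional edge by edge—yet prove that they yield row- and column-marginal conditions on a \emph{common} support, and then make the extremal concentration and peeling rigorous. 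It is this induction, rather than any single inequality, that upgrades the two-pivot computation of Theorem \ref{th2} to arbitrary $k$.
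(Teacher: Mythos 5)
Your proposal is correct and follows essentially the same route as the paper: the same type dichotomy for $\Psi(H)$, the same linear systems extracted from Lemmas \ref{together} and \ref{lll}, the same Case 2 collapse via the $j$-independent index identity, and the same extremal mechanism in Case 1 (weighted averages of the $m_i$'s and $n_j$'s bounded by their extremes, with equality forcing the support to concentrate). The only organizational differences are that the paper runs a two-ended induction on $\min(i,j)$, obtaining the equality $m_{k+1-r}n_r = m_r n_{l+1-r}$ from the two systems alone without needing the link $\Sigma'/\Sigma=\Theta'/\Theta$ supplied by Lemma \ref{equations1}, whereas you peel only the top pair $(k,l)$ using that link, and your ``moreover'' argument (constant $m_i$ forces all $n_j$ equal directly from the column relations) slightly streamlines the paper's reuse of its equation (\ref{tratata}); both versions are sound.
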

	\begin{proof}[Proof of Theorem \ref{th3}]
		We first prove the first claim. Let $c$ be the central vertex of $T$ and $c'$ be the central vertex of $T'$. Let $p_1=p_{1,1},\ldots,p_k=p_{k,1}$ be the pivots of $T$, and $p_1'=p_{1,1}',\ldots,p_l'=p_{l,1}'$ be the pivots of $T'$.

		Let $H \leq \GG$ and $H' \leq \GG'$ be finite index subgroups such that $H \cong H'$.
		From Lemma \ref{together} and Lemma \ref{lll} applied to both $\Delta=T$ and $\Delta'=T'$ we have a system of equations on the labels of vertices and edges of $\Psi(H)$. We will show that this system of equations does not have any solutions in the positive integers unless $M(T)$ and $M(T')$ are commensurable.

		Below we always mean that $i$ ranges between $1$ and $k$, and $j$ ranges between $1$ and $l$, unless stated otherwise.
		
		Note that there could exist vertices of the following types in $\Psi(H)$: $c/c', \: p_i/p_j', \: p_i/c', \: c/p_j'$, see Definition \ref{type}. It also follows from Definition \ref{type} that the following statements hold:
		\begin{itemize}
			\item Every $p_i/p_j'$-type vertex can be connected only with $c/c'$-type vertices;
			\item Every $c/c'$-type vertex can be connected only with $p_i/p_j'$-type vertices;
			\item Every $p_i/c'$-type vertex can be connected only with $c/p_j'$-type vertices;
			\item Every $c/p_j'$-type vertex can be connected only with $p_i/c'$-type vertices.	
		\end{itemize}
		Thus, there are two cases: either $\Psi(H)$ has only $p_i/p_j'$-type and $c/c'$-type vertices, or $\Psi(H)$ has only $p_i/c'$-type and $c/p_j'$-type vertices.

		{\it Case 1.} Suppose first $\Psi(H)$ has only $p_i/p_j'$-type and $c/c'$-type vertices. We will denote $p_i/p_j'$-type vertices as $i/j$-type vertices for short.
		
		Let $v$ be an $i/j$-type vertex of $\Psi(H)$. Then all vertices adjacent to $v$ are of type $c/c'$. Let $f_1, \ldots,f_N$ be all edges incident to $v$, and let $f_s$ connect $v$ with $v_s$ for $s=1,\ldots,S$ (possibly some of the vertices $v_s$ coincide).
		Then, by Lemma \ref{together}, in the notations of which we have $t=m_i+1, \: t'=n_j + 1, \: r=1, \: r'=1$, we obtain
		\begin{equation}\label{s'}
		m_i \: \sum_{s=1}^S L(v_s)l_v(f_s) = n_j \: \sum_{s=1}^S L'(v_s)l_v(f_s),
		\end{equation}
		\begin{equation}\label{s'2}
		m_i \: \sum_{s=1}^S L(v_s)l'_v(f_s) = n_j \: \sum_{s=1}^S L'(v_s)l'_v(f_s).
		\end{equation}
		Now suppose that $w_1, \ldots, w_Q$ are all $c/c'$-type vertices of $\Psi(H)$. For every vertex $w_q, \: q=1,\ldots, Q$,  consider all the edges incident to $w_q$ which finish in $i/j$-type vertices (for fixed $i$ and $j$), and let $X^q_{ij}$ be the sum of the $l$-labels of these edges at the $i/j$-type ends, and let $Y^q_{ij}$ be the sum of the $l'$-labels of these edges at the $i/j$-type ends; we let $X^q_{ij}=Y^q_{ij}=0$ if there are no such edges.
		It follows from Lemma \ref{lll} that for all $q=1,\ldots, Q$ we have
		$$
		|F_{H,w_q}: P_{H,w_q}|=X^q_{i1}+X^q_{i2} + \ldots + X^q_{il}
		$$
		for every $i$, so
		\begin{equation}\label{qqq}
		X^q_{11}+X^q_{12} + \ldots + X^q_{1l} = X^q_{21}+X^q_{22} + \ldots + X^q_{2l}= \ldots=X^q_{k1}+X^q_{k2} + \ldots + X^q_{kl}. 		
		\end{equation}
		Analogously, for all $q=1, \ldots, Q$ if we denote $w'_q=\pi_*(w_q)$, then
		$$
		|F_{H',w'_q}: P_{H',w'_q}|=Y^q_{1j}+Y^q_{2j} + \ldots + Y^q_{kj}
		$$
		for every $j$, so
		\begin{equation}\label{qqq2}
		Y^q_{11}+Y^q_{21} + \ldots + Y^q_{k1} = Y^q_{12}+Y^q_{22} + \ldots + Y^q_{k2}= \ldots=Y^q_{1l}+Y^q_{2l} + \ldots + Y^q_{kl}. 		
		\end{equation}
		
		Now sum up the equalities of the form (\ref{s'}) for all $i/j$-type vertices $v$ (for fixed $i$ and $j$) and group the summands with the vertex label corresponding to the same $c/c'$-type vertex together.  We obtain
		$$	
		m_i \: \sum_{q=1}^Q L(w_q)X^q_{ij}= n_j \: \sum_{q=1}^Q L'(w_q)X^q_{ij}
		$$
		for all possible $i$ and $j$.
		Denote
		\begin{equation}\label{x}	
		x_{ij}=\sum_{q=1}^Q L(w_q)X^q_{ij}, \quad x'_{ij}=\sum_{q=1}^Q L'(w_q)X^q_{ij}.
		\end{equation}	
		Then we get
		\begin{equation}\label{X3}
		m_ix_{ij}=n_jx'_{ij}
		\end{equation}
		for all possible $i$ and $j$.
		 	
		Analogously, sum up the equalities of the form (\ref{s'2}) for all $i/j$-type vertices $v$ (for fixed $i$ and $j$) and group the summands with the vertex label corresponding to the same $c/c'$-type vertex together.  We obtain
		$$	
		m_i \: \sum_{q=1}^Q L(w_q)Y^q_{ij}= n_j \: \sum_{q=1}^Q L'(w_q)Y^q_{ij}
		$$
		for all possible $i$ and $j$.
		Denote
		\begin{equation}\label{x'}	
		y_{ij}=\sum_{q=1}^Q L(w_q)Y^q_{ij}, \quad y'_{ij}=\sum_{q=1}^Q L'(w_q)Y^q_{ij}.
		\end{equation}	
		Then we get
		\begin{equation}\label{X3'}
		m_iy_{ij}=n_jy'_{ij}
		\end{equation}
		for all possible $i$ and $j$.  	
		
		From (\ref{qqq}) and (\ref{x}) we get
		\begin{equation}\label{X1}
		x_{11}+x_{12} + \ldots + x_{1l} = x_{21}+x_{22} + \ldots + x_{2l}= \ldots=x_{k1}+x_{k2} + \ldots + x_{kl}, 		
		\end{equation}
		\begin{equation}\label{X2}
		x'_{11}+x'_{12} + \ldots + x'_{1l} = x'_{21}+x'_{22} + \ldots + x'_{2l}= \ldots=x'_{k1}+x'_{k2} + \ldots + x'_{kl}, 		
		\end{equation}
		for all possible $i$ and $j$.
		Summing the equalities (\ref{X3}) with the same $i$ we get the following system of equations
		\begin{equation}\label{S1}
		\begin{cases}
		m_1(x_{11}+ x_{12}+ \ldots + x_{1l})=n_1x_{11}'+n_2x_{12}'+\ldots +n_lx_{1l}' \\
		m_2(x_{21}+ x_{22}+ \ldots + x_{2l})=n_1x_{21}'+n_2x_{22}'+\ldots +n_lx_{2l}' \\
		\qquad \cdots \\
		m_{k-1}(x_{k-1,1}+ x_{k-1,2}+ \ldots + x_{k-1,l})=n_1x_{k-1,1}'+n_2x_{k-1,2}'+\ldots +n_lx_{k-1,l}' \\
		m_k(x_{k1}+ x_{k2}+ \ldots + x_{kl})=n_1x_{k1}'+n_2x_{k2}'+\ldots +n_lx_{kl}' \\
		\end{cases}
		\end{equation}
		Analogously, from (\ref{qqq2}) and (\ref{x'}) we get
		\begin{equation}\label{X1'}
		y_{11}+y_{21} + \ldots + y_{k1} = y_{12}+y_{22} + \ldots + y_{k2}= \ldots=y_{1l}+y_{2l} + \ldots + y_{kl}, 		
		\end{equation}
		\begin{equation}\label{X2'}
		y'_{11}+y'_{21} + \ldots + y'_{k1} = y'_{12}+y'_{22} + \ldots + y'_{k2}= \ldots=y'_{1l}+y'_{2l} + \ldots + y'_{kl}, 		
		\end{equation}
		for all possible $i$ and $j$.
		Summing the equalities (\ref{X3'}) with the same $j$ we get the following system of equations
		\begin{equation}\label{S1'}
		\begin{cases}
		m_1y_{11}+ m_2y_{21}+ \ldots + m_ky_{k1}=n_1(y_{11}'+y_{21}'+\ldots +y_{k1}') \\
		m_1y_{12}+ m_2y_{22}+ \ldots + m_ky_{k2}=n_2(y_{12}'+y_{22}'+\ldots +y_{k2}') \\
		\qquad \cdots \\
		m_1y_{1,l-1}+ m_2y_{2,l-1}+ \ldots + m_ky_{k,l-1}=n_{l-1}(y_{1,l-1}'+y_{2,l-1}'+\ldots +y_{k,l-1}') \\
		m_1y_{1l}+ m_2y_{2l}+ \ldots + m_ky_{kl}=n_{l-1}(y_{1l}'+y_{2l}'+\ldots +y_{kl}')  \\
		\end{cases}
		\end{equation}
		
		Let $A=(a_{ij})$ be the matrix with $k$ rows and $l$ columns, such that $a_{ij}=1$ if there are $i/j$-type vertices in $\Psi(H)$ and $a_{ij}=0$ otherwise. Note that for every $q=1, \ldots, Q$ we have $L(w_q)>0, \: L'(w_q)>0$, and $X^q_{ij}=0$ if and only if $Y^q_{ij}=0$, if and only if there are no vertices of type $i/j$ adjacent to $w_q$. Obviously every vertex of type $i/j$ of $\Psi(H)$ should be adjacent to at least one $c/c'$-type vertex, i.e. to at least one of $w_1,\ldots,w_Q$.
		This means that $x_{ij} \geq 0, \: x'_{ij} \geq 0$, and $x_{ij}=0$ if and only if $x'_{ij}=0$, if and only if $X^q_{ij}=0$ for all $q=1,\ldots,Q$, if and only if there are no vertices of type $i/j$ in $\Psi(H)$, if and only if $a_{ij}=0$. Analogously $y_{ij} \geq 0, \: y'_{ij} \geq 0$, and $y_{ij}=0$ if and only if $y'_{ij}=0$, if and only if $Y^q_{ij}=0$ for all $q=1,\ldots,Q$, if and only if there are no vertices of type $i/j$ in $\Psi(H)$, if and only if $a_{ij}=0$.
		
		Note also that for every $i$ there should exist some $j$ such that there are $i/j$-type vertices in $\Psi(H)$. Analogously, for every $j$ there should exist some $i$ such that $i/j$-type vertices exist in $\Psi(H)$. This means that the matrix $A$ does not have zero rows or columns.
		
		Combining (\ref{X1}), (\ref{X2}), (\ref{S1}), (\ref{X1'}), (\ref{X2'}), (\ref{S1'}), we have the following equations and conditions on $x_{ij}, x'_{ij}, y_{ij}, y'_{ij}, a_{ij}$:
		\begin{equation}\label{System1}
		\begin{cases}
		m_1(x_{11}+ x_{12}+ \ldots + x_{1l})=n_1x_{11}'+n_2x_{12}'+\ldots +n_lx_{1l}'  \\
		m_2(x_{21}+ x_{22}+ \ldots + x_{2l})=n_1x_{21}'+n_2x_{22}'+\ldots +n_lx_{2l}' \\
		\qquad \cdots \\
		m_{k-1}(x_{k-1,1}+ x_{k-1,2}+ \ldots + x_{k-1,l})=n_1x_{k-1,1}'+n_2x_{k-1,2}'+\ldots +n_lx_{k-1,l}' \\
		m_k(x_{k1}+ x_{k2}+ \ldots + x_{kl})=n_1x_{k1}'+n_2x_{k2}'+\ldots +n_lx_{kl}' \\
		x_{11}+x_{12} + \ldots + x_{1l} = x_{21}+x_{22} + \ldots + x_{2l}= \ldots=x_{k1}+x_{k2} + \ldots + x_{kl}
		\\
		x'_{11}+x'_{12} + \ldots + x'_{1l} = x'_{21}+x'_{22} + \ldots + x'_{2l}= \ldots=x'_{k1}+x'_{k2} + \ldots + x'_{kl}
		\end{cases}
		\end{equation}
		\begin{equation}\label{System2}
		\begin{cases}
		m_1y_{11}+ m_2y_{21}+ \ldots + m_ky_{k1}=n_1(y_{11}'+y_{21}'+\ldots +y_{k1}') \\
		m_1y_{12}+ m_2y_{22}+ \ldots + m_ky_{k2}=n_2(y_{12}'+y_{22}'+\ldots +y_{k2}') \\
		\qquad \cdots \\
		m_1y_{1,l-1}+ m_2y_{2,l-1}+ \ldots + m_ky_{k,l-1}=n_{l-1}(y_{1,l-1}'+y_{2,l-1}'+\ldots +y_{k,l-1}') \\
		m_1y_{1l}+ m_2y_{2l}+ \ldots + m_ky_{kl}=n_l(y_{1l}'+y_{2l}'+\ldots +y_{kl}')  \\
		y_{11}+y_{21} + \ldots + y_{k1} = y_{12}+y_{22} + \ldots + y_{k2}= \ldots=y_{1l}+y_{2l} + \ldots + y_{kl}
		\\
		y'_{11}+y'_{21} + \ldots + y'_{k1} = y'_{12}+y'_{22} + \ldots + y'_{k2}= \ldots=y'_{1l}+y'_{2l} + \ldots + y'_{kl} 	
		\end{cases}
		\end{equation}
		\begin{equation}\label{System3}
		x_{ij}=0 \Leftrightarrow x'_{ij}=0 \Leftrightarrow y_{ij}=0 \Leftrightarrow y'_{ij}=0 \Leftrightarrow a_{ij}=0, \quad i=1,\ldots,k, \: j=1, \ldots l
		\end{equation}
		\begin{equation}\label{System4}
		\forall i \: \forall j \: x_{ij}, x'_{ij}, y_{ij}, y'_{ij} \geq 0, \quad \forall i \: \exists j: x_{ij}>0, \quad \forall j \: \exists i: x_{ij}>0, \quad i=1,\ldots,k, \: j=1, \ldots l.
		\end{equation}
		
		Without loss of generality, we can assume that $k \geq l$.
		By $[\alpha]$ we denote the (lower) integer part of $\alpha$, so $[l/2]=l/2$ if $l$ is even, and $[l/2]=(l-1)/2$ if $l$ is odd.
		
		\begin{claim}
		In the above notation, $a_{ij}=a_{k+1-i, l+1-j}=0$ if $i \leq [l/2]$ or $j \leq [l/2]$, provided $i \neq j$.
		\end{claim}
	\begin{proof}
		We prove this claim by induction on $d_{ij}$, where $d_{ij} = \min \: (i, \: j)$. Thus we are interested in the case when $1 \leq d_{ij} \leq [l/2]$.
		
		The base of induction is the case when $d_{ij}=1$, so either $i=1$, or $j=1$. This case is included in the inductive step below, with $r=1$.
		
		Suppose the claim is proved for $d(i,j) \leq r-1$, where $1 \leq r \leq [l/2]$, and we want to prove it for $d(i,j)=r$.
		We have $a_{ij}=0$ if $i \leq r-1$ or $j \leq r-1$, provided $i \neq j$, and $a_{ij}=0$ if $i \geq k-r+2$ or $j \geq l-r +2$, provided $k-i \neq l-j$ (in the case $r=1$ there are no conditions).
		By (\ref{System3}), this means that the $r$-th equation of (\ref{System1}) has the following form
		\begin{equation}\label{z1}
		m_r(x_{rr}+ x_{r,r+1}+ \ldots + x_{r,l+1-r})=n_rx_{rr}'+n_{r+1}x_{r,r+1}'+\ldots +n_{l+1-r}x_{r,l+1-r}',
		\end{equation}	
		and the $(k+1-r)$-th equation of (\ref{System1}) has the following form
		\begin{multline}\label{z2}
		m_{k+1-r}(x_{k+1-r, r}+ x_{k+1-r,r+1}+ \ldots + x_{k+1-r,l+1-r})=\\=n_rx_{k+1-r,r}'+n_{r+1}x_{k+1-r,r+1}'+\ldots +n_{l+1-r}x_{k+1-r,l+1-r}'.
		\end{multline}  	
		Also from the last two lines of equations of (\ref{System1}) we obtain
		\begin{equation}\label{z3} 	
		x_{rr}+x_{r,r+1} + \ldots + x_{r,l+1-r}=x_{k+1-r,r}+x_{k+1-r,r+1} + \ldots + x_{k+1-r,l+1-r},
		\end{equation}
		\begin{equation}\label{z4}	
		x'_{rr}+x'_{r,r+1} + \ldots + x'_{r,l+1-r}=x'_{k+1-r,r}+x'_{k+1-r,r+1} + \ldots + x'_{k+1-r,l+1-r}.
		\end{equation}
		
		Since $n_r < n_{r+1} < \ldots < n_{l+1-r}$, (\ref{z1}) implies
		\begin{equation}\label{I1}
		m_r(x_{rr}+ x_{r,r+1}+ \ldots + x_{r,l+1-r}) \geq n_r(x_{rr}'+x_{r,r+1}'+\ldots +x_{r,l+1-r}')
		\end{equation}
		and equality in (\ref{I1}) is obtained if and only if $x_{r,r+1}'=x_{r,r+2}'=\ldots=x_{r,l+1-r}'=0$.
		
		Also (\ref{z2}) implies	
		\begin{multline}\label{I2}
		m_{k+1-r}(x_{k+1-r, r}+ x_{k+1-r,r+1}+ \ldots + x_{k+1-r,l+1-r}) \leq \\ \leq n_{l+1-r}(x_{k+1-r,r}'+x_{k+1-r,r+1}'+\ldots +x_{k+1-r,l+1-r}'),
		\end{multline}  	
		and equality in (\ref{I2}) is obtained if and only if $x_{k+1-r,r}'=x_{k+1-r,r+1}'=\ldots =x_{k+1-r,l-r}'=0$.
		Multiplying (\ref{I1}) by $m_{k+1-r}$, (\ref{I2}) by $m_r$ and using (\ref{z3}), (\ref{z4}), we obtain
		\begin{multline}\label{long}
		m_{k+1-r}n_r(x_{rr}'+x_{r,r+1}'+\ldots +x_{r,l+1-r}') \leq m_{k+1-r} m_r(x_{rr}+ x_{r,r+1}+ \ldots + x_{r,l+1-r}) = \\ =m_r m_{k+1-r}(x_{k+1-r, r}+ x_{k+1-r,r+1}+ \ldots + x_{k+1-r,l+1-r}) \leq \\
		\leq m_r n_{l+1-r}(x_{k+1-r,r}'+x_{k+1-r,r+1}'+\ldots +x_{k+1-r,l+1-r}')= \\ =m_r n_{l+1-r}(x_{rr}'+x_{r,r+1}'+\ldots +x_{r,l+1-r}').
		\end{multline}
		Note that by (\ref{System3}), (\ref{System4}) we have $x_{rr}'+x_{r,r+1}'+\ldots +x_{r,l+1-r}' \neq 0$ (since otherwise the $r$-th row of $A$ is zero, which is impossible). So (\ref{long}) implies
		\begin{equation}\label{oo}	
		m_{k+1-r}n_r \leq m_r n_{l+1-r}.
		\end{equation}	
		
		Now apply analogous arguments to (\ref{System2}). By induction hypothesis the $r$-th equation of (\ref{System2}) has the following form
		\begin{equation}\label{z1'}
		m_ry_{rr}+ m_{r+1}y_{r+1,r}+ \ldots + m_{k+1-r}y_{k+1-r,r}=n_r(y_{rr}'+y_{r+1,r}'+\ldots +y_{k+r-1,r}'),
		\end{equation}	
		and the $(l+1-r)$-th equation of (\ref{System2}) has the following form
		\begin{multline}\label{z2'}
		m_ry_{r,l+1-r}+ m_{r+1}y_{r+1,l+1-r}+ \ldots + m_{k+1-r}y_{k+1-r,l+1-r}= \\ =n_{l+1-r}(y_{r,l+1-r}'+y_{r+1,l+1-r}'+\ldots +y_{k+r-1,l+1-r}').
		\end{multline}  	
		Also from the last two lines of equations (\ref{System2}) we obtain
		\begin{equation}\label{z3'} 	
		y_{rr}+y_{r+1,r} + \ldots + y_{k+1-r,r}=y_{r,l+1-r}+y_{r+1,l+1-r} + \ldots + y_{k+1-r,l+1-r},
		\end{equation}
		\begin{equation}\label{z4'}	
		y'_{rr}+y'_{r+1,r} + \ldots + y'_{k+1-r,r}=y'_{r,l+1-r}+y'_{r+1,l+1-r} + \ldots + y'_{k+1-r,l+1-r}.
		\end{equation}
		
		Since $m_r < m_{r+1} < \ldots < m_{k+1-r}$, (\ref{z1'}) implies
		\begin{equation}\label{I1'}
		m_r(y_{rr}+ y_{r+1,r}+ \ldots + y_{k+1-r,r}) \leq n_r(y_{rr}'+y_{r+1,r}'+\ldots +y_{k+r-1,r}'),
		\end{equation}
		and equality in (\ref{I1'}) is obtained if and only if $y_{r+1,r}=y_{r+2,r}=\ldots=y_{k+1-r,r}=0$.

		Also (\ref{z2'}) implies	
		\begin{multline}\label{I2'}
		m_{k+1-r}(y_{r,l+1-r}+ y_{r+1,l+1-r}+ \ldots + y_{k+1-r,l+1-r}) \geq \\ \geq n_{l+1-r}(y_{r,l+1-r}'+y_{r+1,l+1-r}'+\ldots +y_{k+r-1,l+1-r}'),
		\end{multline}
		and equality in (\ref{I2'}) is attained if and only if $y_{r,l+1-r}=y_{r+1,l+1-r}=\ldots=y_{k-r,l+1-r}=0$.
		Multiplying (\ref{I1'}) by $n_{l+1-r}$, (\ref{I2'}) by $n_r$ and using (\ref{z3'}), (\ref{z4'}), we obtain
		\begin{multline}\label{long'}
		n_{l+1-r}m_r(y_{rr}+ y_{r+1,r}+ \ldots + y_{k+1-r,r}) \leq n_{l+1-r}n_r(y_{rr}'+y_{r+1,r}'+\ldots +y_{k+r-1,r}')= \\ = n_rn_{l+1-r}(y_{r,l+1-r}'+y_{r+1,l+1-r}'+\ldots +y_{k+r-1,l+1-r}') \leq \\ \leq n_rm_{k+1-r}(y_{r,l+1-r}+ y_{r+1,l+1-r}+ \ldots + y_{k+1-r,l+1-r})=\\=n_rm_{k+1-r}(y_{rr}+ y_{r+1,r}+ \ldots + y_{k+1-r,r}).
		\end{multline}
		Note that by (\ref{System3}), (\ref{System4}) we have $y_{rr}+ y_{r+1,r}+ \ldots + y_{k+1-r,r} \neq 0$ (since otherwise the $r$-th column of $A$ is zero, which is impossible). So (\ref{long'}) implies $$n_{l+1-r}m_r \leq n_rm_{k+1-r}.$$
		Combined with (\ref{oo}), this gives
		\begin{equation}\label{tratata}		
		 m_{k+1-r}n_r = m_r n_{l+1-r},
		\end{equation}
		so all the inequalities in (\ref{long}), (\ref{long'}), and then also in (\ref{I1}), (\ref{I2}), (\ref{I1'}), (\ref{I2'}), turn into equalities. As mentioned above, this means that
		$$
		x_{r,r+1}'=x_{r,r+2}'=\ldots=x_{r,l+1-r}'=0, \quad x_{k+1-r,r}'=x_{k+1-r,r+1}'=\ldots =x_{k+1-r,l-r}'=0,
		$$
		so by (\ref{System3}) we have
		$$	
		a_{r,r+1}=a_{r,r+2}=\ldots=a_{r,l+1-r}=0, \quad a_{k+1-r,r}=a_{k+1-r,r+1}=\ldots =a_{k+1-r,l-r}=0,
		$$
		and 	
		$$	
		y_{r+1,r}=y_{r+2,r}=\ldots=y_{k+1-r,r}=0, \quad y_{r,l+1-r}=y_{r+1,l+1-r}=\ldots=y_{k-r,l+1-r}=0,
		$$
		so by (\ref{System3}) we have
		$$	
		a_{r+1,r}=a_{r+2,r}=\ldots=a_{k+1-r,r}=0, \quad a_{r,l+1-r}=a_{r+1,l+1-r}=\ldots=a_{k-r,l+1-r}=0.
		$$
		Combined with the induction hypothesis, this proves that
		$a_{ij}=a_{k+1-i, l+1-j}=0$ if $i \leq r$ or $j \leq r$, provided $i \neq j$. This proves the claim.
	\end{proof}

		Thus we have proved that
		\begin{equation}\label{zero}
		a_{ij}=a_{k+1-i, l+1-j}=0, \: \text{if} \: i \leq [l/2] \: \text{or} \: j \leq [l/2], \: \text{provided} \: i \neq j.
		\end{equation}
		
		\begin{claim}
			In the above notation, one has $k=l$.
		\end{claim}
		\begin{proof}
			Suppose, on the contrary, that $k>l$. Suppose first $l$ is even. Then we have from (\ref{zero}) that $a_{ij}=0$ if $j \leq l/2$ and $i \neq j$, and $a_{ij}=0$ if $j \geq l/2+1$ and $k-i \neq l-j$. In particular, taking $i=l/2+1$, we get that $a_{l/2+1,j}=0$ for $j \leq l/2$ (since for these pairs $i \neq j$), and  $a_{l/2+1,j}=0$ for $j \geq l/2+1$ (since for these pairs $k-i=k-l/2-1 > l/2 - 1 \geq l -j$), so $A$ has a zero row, a contradiction.
		
		So $l$ is odd. Then it follows from (\ref{zero}) that $a_{ij}=0$ if $j \leq (l-1)/2$ and $i \neq j$, and $a_{ij}=0$ if $j \geq (l+3)/2$ and $k-i \neq l-j$. In particular, $a_{(l+1)/2,j}=a_{(l+3)/2,j}=0$ when $j \neq (l+1)/2$. Denote $(l+1)/2=b$, then $(l+3)/2=b+1$. Due to (\ref{System3}), this means that among the equations (\ref{System1}) we have the following:
		$$
		\begin{cases}
		m_{b}x_{bb}=n_{b}x'_{bb}, \\m_{b+1}x_{b+1,b}=n_{b}x'_{b+1,b},  \\
		x_{bb}=x_{b+1,b}, \\
		x_{bb}'=x_{b+1,b}'.
		\end{cases}
		$$
		This immediately implies $m_b=m_{b+1}$, a contradiction. This shows that $k=l$.
		\end{proof}
	
		It follows from (\ref{zero}) that $a_{ij}=0$ if $i \neq j$. Then $A$ is the identity matrix, since it does not have zero rows or columns, so, according to (\ref{System3}), (\ref{System4}), we have $x_{ij}=x'_{ij}=0$ if $i \neq j$, and $x_{ii}>0$ for all $i=1,\ldots,k$, therefore (\ref{System1}) turns into
		$$
		\begin{cases}
		m_1x_{11}=n_1x_{11}' \\
		m_2x_{22}=n_2x_{22}' \\
		\qquad \cdots  \\
		m_kx_{kk}=n_kx_{kk}' \\
		x_{11}=x_{22}=\ldots=x_{kk} \\
		x'_{11}=x'_{22}=\ldots=x'_{kk},		
		\end{cases}
		$$
		which immediately implies (\ref{mn}). This finishes Case 1.

		{\it Case 2.}
		Now suppose that $\Psi(H)$ has only $p_i/c'$-type and $c/p_j'$-type vertices, $i=1,\ldots,k$, $j=1,\ldots,l$. The proof in this case is almost the same as the proof of Case 2 in Theorem \ref{th2}.
		
		Let $v$ be a $c/p_j'$-type vertex of $\Psi(H)$, $j=1,\ldots,l$. Then all vertices adjacent to $v$ are of types $p_i/c'$, $i=1,\ldots,k$. Let $f_1, \ldots f_S$ be all edges incident to $v$, and let $f_s$ connect $v$ with $v_s$ for $s=1,\ldots,S$ (possibly some of the vertices $v_s$ coincide).
		Then, by Lemma \ref{together}, in the notations of which we have $t = k, \: t' = n_j +1, \: r = k, \: r' = 1$, we obtain
		\begin{equation}\label{s2'}
		\: \sum_{s=1}^S L(v_s)l'_v(f_s) = \frac{k}{k-1}n_j \: \sum_{s=1}^S L'(v_s)l'_v(f_s).
		\end{equation}

		Now suppose that $w_1, \ldots, w_Q$ are all $p_i/c'$-type vertices of $\Psi(H)$, for all $i=1,\ldots,k$.  For every vertex $w_q, \: q=1,\ldots,Q$ and $j=1,2$, consider all the edges incident to $w_q$ which finish in $c/p_j'$-type vertices (note that such edges should always exist), and let $Z^q_j$ be the sum of the $l'$-labels of these edges at the $c/p_j'$-type ends.  Then we have $Z^q_j > 0$ for every $q=1,\ldots, Q$ and $j=1,2$. Let $w_q'=\varphi_*(w_q)$ for  $q=1,\ldots, Q$. Then it follows from Lemma \ref{lll} that for every $q=1,\ldots,Q$ we have
		\begin{equation}\label{kkk'}	
		|F_{H', w'_q}: P_{H', w'_q}|=Z^q_1=Z^q_2.
		\end{equation}
		Now sum up the equalities of the form (\ref{s2'}) for all $c/p_1'$-type vertices $v$, and group the summands with the vertex label corresponding to the same $p_i/c'$-type vertex together.  We obtain
		\begin{equation}\label{rr1}
		\sum_{q=1}^Q L(w_q)Z^q_1= \frac{k}{k-1}n_1 \: \sum_{q=1}^Q L'(w_q)Z^q_1.
		\end{equation}
		Analogously, summing the equalities of the form (\ref{s2'}) for all $c/p_2'$-type vertices $v$ we get
		\begin{equation}\label{rr2}
		\sum_{q=1}^Q L(w_q)Z^q_2=  \frac{k}{k-1}n_2 \: \sum_{q=1}^Q L'(w_q)Z^q_2.
		\end{equation}
		Together with (\ref{kkk'}), equalities (\ref{rr1}) and (\ref{rr2}) imply that $n_1=n_2$, a contradiction.
		
		This finishes the proof of the first claim of the theorem.
		
		The proof of the second claim is similar, but easier. Indeed, let $m_1=m_2=m$, $k=2$, and apply the arguments as in the proof of the first claim above.	In Case 1, the above argument shows that (\ref{tratata}) still holds with $r=1$, so $m_2n_1=m_1n_l$, but $m_1=m_2$, so $n_1=n_l$, a contradiction. The proof of Case 2 is the same as above.	
		
		This proves Theorem \ref{th3}.
	\end{proof}

	\section{Characterisation of commensurability classes of RAAGs defined by trees of diameter 4}\label{Sec:com}

We now turn to prove commensurability of some RAAGs defined by trees of diameter 4.
\begin{prop} \label{prop:1}
	Let $\GG=\GG(T)$, where $T=T((v_1,1),\dots, (v_l,1);0)$, $l \geq 2$. For any $k_1,\dots, k_l$ there exist a non-negative integer $q$ and a finite index subgroup $H$ of $\GG$ such that $H\simeq \GG(S)$, where $S$ is a tree of diameter 4 of the form $S=T((v_1,k_1),\dots, (v_l,k_l); q)$. In particular, $\GG(S)$ and $\GG(T)$ are commensurable.
	
	 If $\GG=\GG(T)$, where $T=T((d,2);0)$, then for any $k>2$ there exists a finite index subgroup $H$ of $\GG$ such that $H\simeq \GG(S)$, where $S=T((d,k);0)$.
\end{prop}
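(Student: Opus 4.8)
The plan is to realise $H$ as the kernel of an explicit homomorphism onto a finite group, chosen so that the prescribed pivot multiplicities appear, and then to identify the splitting of $H$ induced on $\widetilde{\Delta}^e$ with the reduced centraliser splitting of $\GG(S)$. For the first assertion, I would first fix a finite group $G$ together with a generating tuple $g_1,\dots,g_l$ such that $[G:\langle g_i\rangle]=k_i$ for every $i$. The existence of such a $G$ is an elementary group-theoretic fact: via the Chinese Remainder Theorem it reduces to the case of prime-power $k_i$, where a suitable finite abelian $p$-group with appropriately chosen cyclic subgroups works. I then define $\phi\colon\GG(T)\to G$ by $\phi(p_i)=g_i$ and $\phi(x)=1$ for the centre $c$ and for every leaf $x$, and set $H=\ker\phi$, a subgroup of index $|G|$.

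Next I would read off the structure of $H$ using the machinery of Section \ref{sec:lineareq}. By Corollary \ref{cor:123} we have $C(p_i)=\langle p_i\rangle\times F(c,\ell_{i,1},\dots,\ell_{i,v_i})$, and $\phi$ kills this entire free factor while sending $p_i$ to $g_i$ of order $t_i=|G|/k_i$; hence $C_H(p_i)=\langle p_i^{t_i}\rangle\times F(c,\ell_{i,1},\dots,\ell_{i,v_i})$, so every pivot retains exactly its $v_i$ leaves and its degree is preserved. The number of $H$-orbits of conjugates of $p_i$ is the double-coset count $|H\backslash\GG/C(p_i)|=[G:\langle g_i\rangle]=k_i$, so there are $k_i$ pivots of degree $v_i$; similarly $\phi(C(c))=\langle g_1,\dots,g_l\rangle=G$ forces a single orbit of centre-conjugates. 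Moreover $C_H(c)=\langle c\rangle\times\ker\psi$, where $\psi\colon F(p_1,\dots,p_l)\to G$, so the free part $\ker\psi$ has rank $R=|G|(l-1)+1$ by the Schreier formula. Through Lemmas \ref{l4} and \ref{lll} the $\sum_i k_i$ pivot-edges at the centre correspond to the generator-cycles of the Schreier graph of $\ker\psi$; these are pairwise edge-disjoint and therefore homologically independent, whence $\sum_i k_i\le R$ and I may set $q=R-\sum_i k_i\ge 0$. The remaining $q$ free basis directions have abelian centraliser (by Theorem \ref{thm:centr} they are not conjugate to powers of generators), and these are exactly the hair vertices. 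This exhibits the quotient graph of groups of $H$ as a star: one centre with vertex group $\BZ\times F_{q+\sum k_i}$, with $k_i$ pivots of vertex group $\BZ\times F_{v_i+1}$, and edge groups $\BZ^2$.

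The remaining point is to conclude that $H\cong\GG(S)$ for $S=T((v_1,k_1),\dots,(v_l,k_l);q)$. The clean route is to observe that the inclusion $H\hookrightarrow\GG(T)$ is a covering of graphs of groups of the reduced centraliser splittings (Lemma \ref{bs}): the fundamental group of the quotient is $H$, injectivity being automatic for coverings, and that quotient is by construction the reduced centraliser splitting of $\GG(S)$. Equivalently one defines $\eta\colon\GG(S)\to\GG(T)$ sending the canonical generators to the explicit conjugates produced above and checks it is an isomorphism onto $H$. The \textbf{main obstacle} is the precise matching of the edge-group inclusions into the centre vertex group, i.e. verifying that the $\sum_i k_i$ pivot-conjugates extend to a free basis of $\ker\psi$ whose remaining members are the $q$ hairs; this bookkeeping, carried out through the Schreier-graph dictionary of Lemma \ref{l4}, is the technical heart, though it is routine once that dictionary is in place. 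In particular no appeal to Hopficity or residual finiteness is needed.

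Finally, the second assertion is the special case $l=2$ with two pivots of equal degree, and admits an even more explicit treatment. For $T=T((d,2);0)$ with pivots $p_1,p_2$ of degree $d+1$, take $G=\BZ/(k-1)$ and $\phi(p_1)=1$, $\phi(p_2)=0$, with $c$ and all leaves mapped to $0$. Then $p_1$ contributes a single pivot $p_1^{\,k-1}$ while $p_2$ contributes the $k-1$ pivots $p_1^{\,s}p_2p_1^{-s}$ for $s=0,\dots,k-2$, all of degree $d$, and the rank computation gives $q=(k-1)(l-1)+1-k=0$, so no hairs appear. Hence $H=\ker\phi\cong\GG(T((d,k);0))$ has index $k-1\ge 2$ for $k>2$. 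This can be confirmed directly by a Reidemeister–Schreier computation with transversal $\{1,p_1,\dots,p_1^{\,k-2}\}$, which reproduces exactly the commutation graph $T((d,k);0)$ on the resulting generators.
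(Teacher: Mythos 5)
Your proposal is correct in outline, but it takes a genuinely different route from the paper. The paper never passes to a normal subgroup: it builds an explicit Stallings-type cover of the rose on the pivot generators (two based cycles of lengths $k_l$ and $k_1$, labelled $p_1$ and $p_l$, completed by loops and partial cycles, with a case division according to whether $k_2=1$ or $k_2>1$), obtaining a subgroup of index $k_1+k_l-1$; it then exhibits an explicit free basis via Nielsen transformations and verifies by hand that an explicit star $Y_0$ in the Bass--Serre tree of the reduced centraliser splitting is a fundamental domain, so that the induced splitting of $H$ is the reduced centraliser splitting of $\GG(S)$. You instead take $H$ to be the kernel of a map onto a finite abelian group $G$ with $[G:\langle g_i\rangle]=k_i$. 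Normality makes all orbit counts trivial ($H\backslash\GG/C(p_i)\cong G/\langle g_i\rangle$ gives exactly $k_i$ pivot orbits, and one centre orbit), removes the paper's case analysis, and treats the $p_i$ symmetrically; the price is the (true, elementary) existence lemma for $G$ and a much larger index $|G|$, hence a larger hair number $q=|G|(l-1)+1-\sum_i k_i$ versus the paper's $(k_1+k_l-1)(l-1)+1-\sum_i k_i$ --- harmless, since the proposition only asserts existence of some $q\geq 0$. Your treatment of $T((d,2);0)$ via $\BZ_{k-1}$ is also correct, and in fact more accurate than the paper's one-line version: the paper's map onto $\BZ_k$ has kernel isomorphic to $\GG(T((d,k+1);0))$, an off-by-one.

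One caveat on the step you flag as the technical heart. The assertion that the $\sum_i k_i$ pivot conjugates extend to a free basis of $\ker\psi$ is true, but the most naive Schreier-graph argument fails in your generality: one cannot always choose a spanning tree meeting every monochromatic cycle in all but one edge. For $G=(\BZ/2)^2$, $g_1=(1,0)$, $g_2=(0,1)$ (so $l=2$, $k_1=k_2=2$), the Schreier graph has four monochromatic digons; each would need to contain one tree edge, but a spanning tree on four vertices has only three edges. The correct routine argument is: choose the connecting path from the basepoint to each cycle inside a fixed spanning tree (you are free to choose the double-coset representatives this way); then each pivot conjugate reads off as a word in the non-tree-edge generators in which each generator occurs at most once, and edge-disjointness makes these supports pairwise disjoint, so substituting each such word for one generator in its support is a composition of Nielsen automorphisms and yields a free basis whose remaining members are the $q$ hairs. (The paper hides the same issue behind ``using Nielsen transformations, it is not difficult to see,'' but its cover is engineered so that this bookkeeping is immediate.) With that supplied, your argument is complete.
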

\begin{proof}
	We first consider the case when $l=1$, i.e. $T=T((d,2);0)$. Let $p_1,p_2$ be the pivots of $T$. Let $\phi$ be the epimorphism $\GG\to \BZ_k$, induced by the map $p_1\mapsto 1$ and $x\mapsto 0$, where $x$ is any canonical generator of $\GG$ different from $p_1$. It is not hard to see that $\ker \phi$ is isomorphic to $\GG(S)$, see \cite{KK,BN}.
	
	Suppose now $l \geq 2$. Denote the only pivot of valency $v_i$ in $T$ by $p_i, \: i=1, \ldots, l$, and the central vertex of $T$ by $c$. Let $V(T)=X$. Let $F(X)$ be the free group with basis $X$. Then there is a natural epimorphism $\phi: F(X)\to \GG$. Fix $k_1,\dots, k_l$ and without loss of generality assume that $k_1\le k_2\le \dots\le k_l$. We can always suppose that $k_l > 1$, since otherwise $k_1=k_2=\ldots=k_l=1$, and the claim is obvious. Let $x_{i,1}, \ldots, x_{i,v_i}$ be all leaves of $T$ adjacent to the pivot $p_i$, $i=1, \ldots, l$.

	Consider the finite index subgroup $G<F(X)$ defined as the subgroup corresponding to the finite cover of the bouquet of $|X|$ circles $K$ defined as follows. Below by  $(a_1,\ldots,a_{k_l})$ we mean a simple cycle of length $k_l$, with vertices appearing in the order $a_1,\ldots,a_{k_l},a_1$, and analogously with other cycles. We take two cycles: the first one of the form $(a_1,\dots, a_{k_l})$, of length $k_l$, with all edges labelled by $p_1$; and the second one of the form $(b_1, \dots, b_{k_1})$, of length $k_1$, with all edges labelled by $p_l$; we identify these cycles by a vertex, $a_1=b_1$. This vertex is the basepoint of the based cover we construct. The degree of the finite cover of the bouquet of $|X|$ circles we construct is $k_1+k_l-1$, so no new vertices will be added, only edges. We first complete the constructed graph to a cover of the free group $F(p_1,p_l)$, by adding loops labelled by $p_1$ at vertices $b_2,\ldots, b_{k_1}$ (no loops are added if $k_1=1$), and adding loops labelled by $p_l$ at vertices $a_2, \ldots, a_{k_l}$.
	
	If $l=2$, then we are done with the construction of the cover. If $l > 2$, we consider two cases. In the first case we suppose that $k_2 = 1$, so for some $2 \leq j < l$ we have $1=k_1=k_2=\dots =k_j<k_{j+1}\le k_{j+2}\le \dots \le k_l$. In the second case we assume that $k_2 > 1$.
	
	\begin{figure}[!h]
		\centering
		\includegraphics[keepaspectratio,width=3.2in]{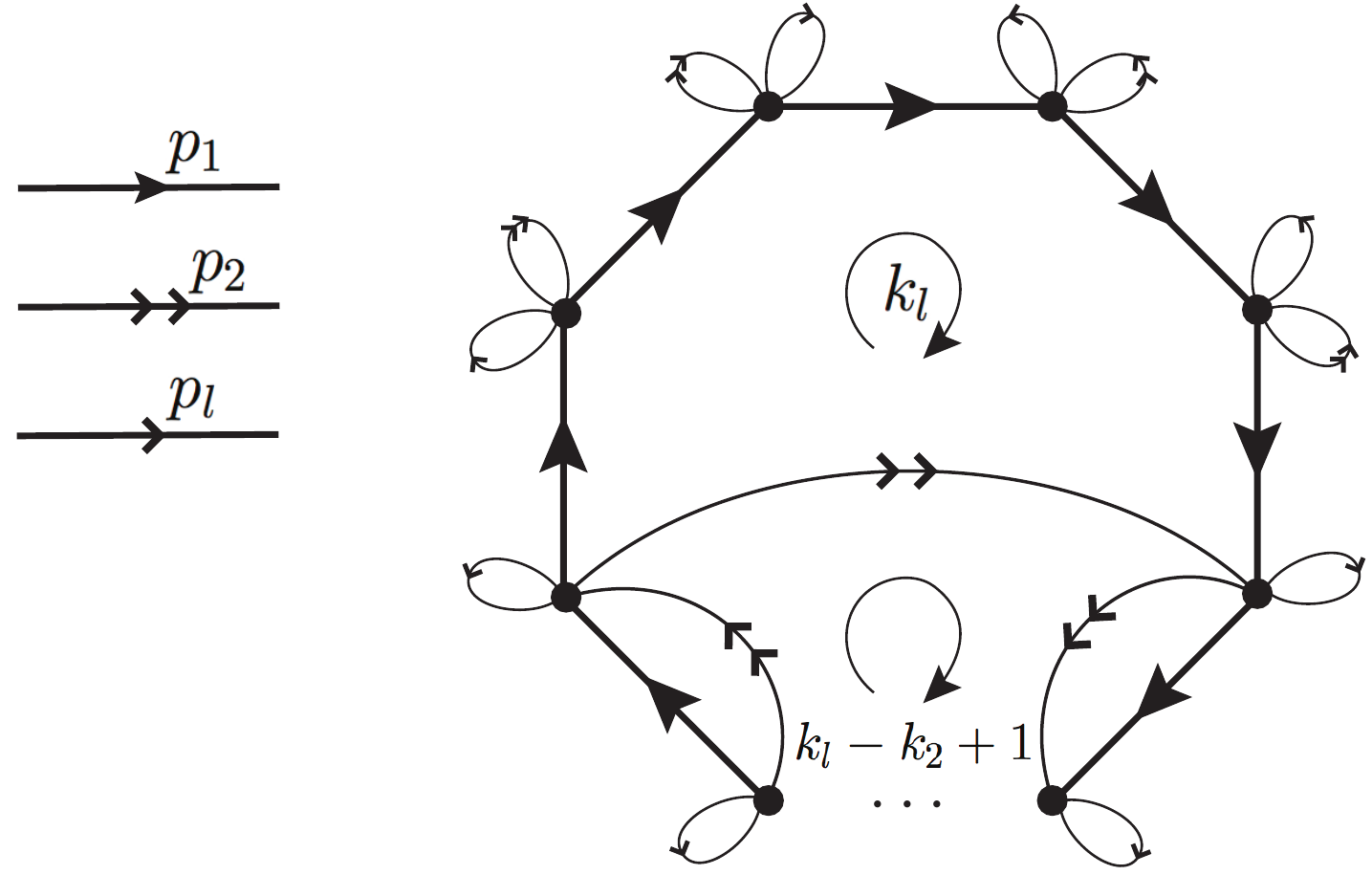}
		\caption{Constructing the cover, case 1} \label{fig:graph}
	\end{figure}
	
	In the first case, for every $i=2,\dots, j$, we add a cycle $(a_1,\dots, a_{k_l})$ labelled by $p_i$. For every $i=j+1,\dots, l-1$, we add loops labelled by $p_i$ to vertices $a_1,\dots, a_{k_i-1}$, and add a cycle $(a_{k_i},a_{k_i+1}, \dots, a_{k_l})$, of length $k_l-k_i+1$, with all edges labelled by $p_i$, see Figure \ref{fig:graph}. Adding loops labelled by the generators from $X \setminus \{p_1,\dots, p_l\}$ at all vertices of the graph we obtain a finite cover of the bouquet of $|X|$ circles and hence this defines a finite index subgroup $G$ of $F(X)$.
	
	\begin{figure}[!h]
		\centering
		\includegraphics[keepaspectratio,width=3.2in]{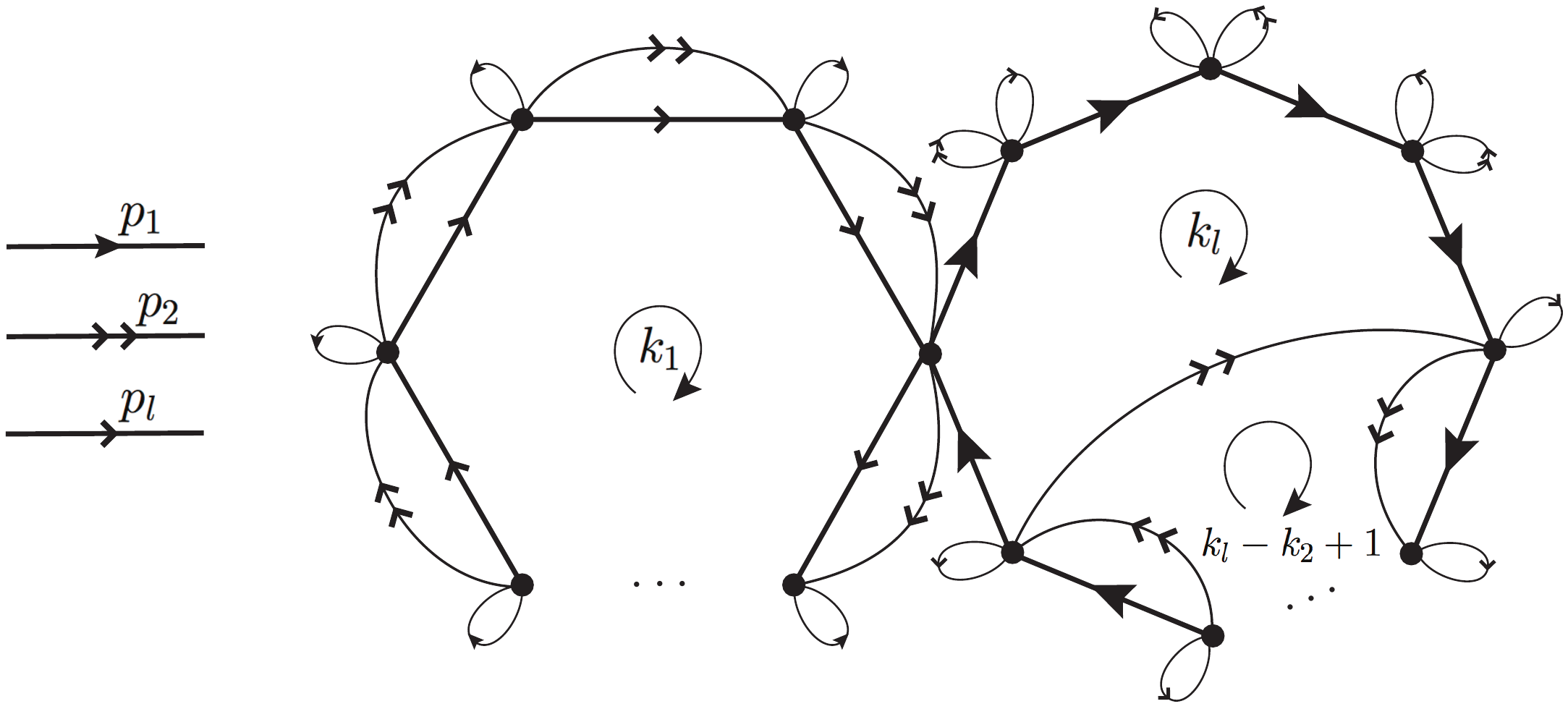}
		\caption{Constructing the cover, case 2} \label{fig:graph2}
	\end{figure}
	
	In the second case, for every $i=2,\dots, l-1$, we do the following. First add a cycle $(b_1,\dots, b_{k_1})$ with all edges labelled  by $p_i$. Then add loops labelled by $p_i$ to vertices $a_2,\dots, a_{k_i-1}$.
	Finally, add a cycle $(a_{k_i},a_{k_{i}+1}, \dots, a_{k_l})$, of length $k_l-k_i+1$, labelled by $p_i$, see Figure \ref{fig:graph2}. Adding loops labelled by the generators $X \setminus \{p_1,\dots, p_l\}$ at all vertices of the graph we obtain a finite cover of the bouquet of $|X|$ circles and  hence this defines a finite index subgroup $G$ of $F(X)$.
	
	We continue under the assumptions of the second case. Proof in the first case is analogous.	
	
	Note that $G$ has index $k_1+k_l-1$ in $F(X)$ by construction.	
		We have epimorphisms $\phi: F(X)\to \GG$ and $\psi: \GG\to F(p_1,\dots, p_l)$, where $F(p_1,\dots, p_l)$ is the free group with the basis $p_1,\ldots,p_l$. By construction, the image of $\psi(\phi(G))$ has index $k_1+k_l-1$ in $F(p_1,\dots, p_l)$. Hence,
	$$
	[F(X):G]\ge [\GG:\phi(G)]\ge [F(p_1,\dots, p_l):\psi(\phi(G))],
	$$
	and it follows that $H=\phi(G)$ has index $k_1+k_l-1$ in $\GG$. 	
	
	Choosing the maximal subtree spanned by the edges $\{(a_i,a_{i+1}), i=1,\dots, k_l-1 \}$, labelled by $p_l$, and $\{ (b_j,b_{j+1})\mid  \ j=1,\dots, k_1-1\}$, labelled by $p_1$, and using Nielsen transformations, it is not difficult to see that $G$ has a free basis which consists of the following generators, where the elements in $A$ form a free basis of $G \cap F(p_1, \ldots, p_l)$:
	\begin{enumerate}
		\item[$A:$]
		\begin{itemize}
			\item  $\{p_i^{k_1}, p_i^{(p_1^{j_i-1})}, {\left(p_i^{k_l-k_i+1}\right)}^{p_1^{k_i-1}} \mid j_i=2,\dots, k_i-1, \: i=2,\dots, l-1\}$;
			\item  $\{p_1^{k_l}, {p_1}^{(p_l^{j_1-1})}\mid j_1=2,\dots, k_1\}$;
			\item  $\{p_l^{k_1}, p_l^{(p_1^{j_l-1})} \mid j_l=2,\dots, k_l \}$.
			\item  $q$ other generators $h_1, \ldots, h_q$ which are words in $p_1, \ldots, p_l$, each of them containing at least two of $p_1, \ldots, p_l$.
		\end{itemize}
		\item[$B$:] $\{x_{i,m_i}^{p_1^r}\mid r=0,\dots, k_l-1\}$, $i =1, \ldots, l$, $m_i=1, \ldots, v_i$; \\
$\{x_{i,m_i}^{p_l^s}\mid s=1,\dots, k_1-1\}$, $i =1, \ldots, l$, $m_i=1, \ldots, v_i$.
		\item [$C$:] $\{c^{p_1^r}\mid r=0,\dots, k_l-1\}, \: \{c^{p_l^s}\mid s=1,\dots, k_1-1\}$.
	\end{enumerate}
	Here $q=(k_1+k_l-1)(l-1)+1-(k_1+\ldots+k_l)$. Indeed, this can be seen by direct calculations or by applying Schreier formula: $G \cap F(p_1, \ldots, p_l)$ is a subgroup of index $k_1+k_l-1$ in the free group $F(p_1, \ldots, p_l)$ of rank $l$, so by Schreier formula  $rk(G \cap F(p_1, \ldots, p_l))=(k_1+k_l-1)(l-1)+1$, and there are $k_1+\ldots+k_l$ generators in $A$ of the first three types, so the above formula for $q$ follows.
	
	\par
Thus $H$ is a finite index subgroup of $\GG$, and $H$ is generated by the set $\phi(A) \cup \phi(B) \cup \phi(C)$. Note that in $\GG$ all elements of $C$ become equal to $c$, so $\phi(C)$ consists just of one generator $c$.
	
	We now show that $H\simeq \GG(S)$, where $S=T((v_1,k_1),\dots, (v_l,k_l); q)$.
Note that the group $\GG$ splits as a fundamental group of a star of groups: $\GG=\pi_1(\mathbb{B})$, with $l$ leaves, with vertex groups at leaves equal to $B_i= \langle c, p_{i}, x_{i,1},\dots, x_{i,v_i}\rangle$, $i=1,\dots, l$, and vertex group at the center vertex equal to $B_c = \langle c, p_1, \ldots, p_l \rangle$. The edge groups are $E_i=\langle c, p_i \rangle$, $i=1,\ldots, l$.	This is the reduced centralizer splitting of $\mathbb{G}$. Let $\mathbf{T}$ be the Bass-Serre tree corresponding to this splitting of $\GG$.

	Recall that by Bass-Serre theory vertices of $\mathbf{T}$ correspond to left cosets by the vertex groups of $\mathbb{B}$, and edges of $\mathbf{T}$ correspond to left cosets by the edge groups of $\mathbb{B}$.
	Consider the finite subgraph $Y_0$ of the tree $\mathbf{T}$, which consists of the vertex $B_c$ and the incident edges $p_{1}^{j_i}E_i$ going to the vertices $p_{1}^{j_i}B_i$, $j_i=0,\dots, k_i-1, \: i=2,\dots, l$, and  edges $p_{l}^{j_1}E_1$ going to the vertices $p_{l}^{j_1}B_1$, $j_1=0,\dots, k_1-1$, together with these vertices. Note that all these vertices are indeed different. Thus $Y_0$ as a graph is a star with $k_1+\dots +k_l$ leaves.
	 We claim that $Y_0$ is the fundamental domain under the action of $H$ on $\mathbf{T}$, i.e., $Y_0$ contains exactly one representative of each vertex and edge orbit under the action of $H$ on $\mathbf{T}$.

	 First we show that $Y_0$ cannot contain two vertices or edges in the same orbit under the action of $H$. It suffices to show this for vertices of $Y_0$.
	 Note that $B_c$ is not in the same $H$-orbit with any other vertex of $Y_0$, since it is not even in the same $\GG$-orbit. Suppose that $u=p_{1}^{j_i}B_i$ is in the same $H$-orbit with some other vertex $v$ of $Y_0$, then $v$ can only be of the form $v=p_{1}^{j_i'}B_i$, where $j_i' \neq j_i$; here $j_i,j_i'=0,\dots, k_i-1, \: i=2,\dots, l$. Then there exists some $d \in B_i=\langle c, p_{i}, x_{i,1},\dots, x_{i,v_i}\rangle$ such that
    $$
    H p_1^{j_i} d = H p_1^{j_i'}.
    $$
    If $j_i < k_i-1$, then from the definition of $H$, see Figure \ref{fig:graph2}, we have that
    $$
    H p_1^{j_i} B_i = H p_1^{j_i},
    $$
    and it follows that $H p_1^{j_i'}=H p_1^{j_i}$, so $p_1^{j_i'-j_i} \in  H$, which is a contradiction by definition of $H$.

    If $j_i = k_i-1$, then $j_i' < k_i - 1$, and we get a contradiction in the same way as above.

    In the same way we can prove that the vertex $p_{l}^{j_1}B_1$, $j_1=0,\dots, k_1-1$, is not in the same $H$-orbit with some other vertex in $Y_0$. Thus indeed $Y_0$ does not contain two vertices or edges in the same orbit under the action of $H$.

	 Now we show that $Y_0$ contains at least one representative of each orbit, and so $Y_0$ is a fundamental domain. Note that it suffices to show that every edge of $\mathbf{T}$ which is incident to some edge in $Y_0$ can be taken to some edge in $Y_0$ by an element of $H$. Let $e$ be some edge of $\mathbf{T}$ incident to the vertex $B_c$, then by Bass-Serre theory $e=gE_i$, where $g \in B_c$, and we can suppose $g$ does not contain $c$, so $g=g(p_1,\ldots,p_l)$. Suppose $i > 1$, the case $i=1$ is analogous. By definition of $H$, see Figure \ref{fig:graph2},
	  $$Q=\{ p_1^s, \: s=0, \ldots, k_i-1; \: p_1^{k_i-1}p_i^m, \: m=1, \ldots,k_l-k_i  ;\: p_l^t, \: t=1, \ldots k_1-1 \}$$
	  is the set of coset representatives of
	  $H$ in $\mathbb{G}$, so $g=hq$ for some $q \in Q$, and $e=gE_i$ is in the same $H$-orbit as $qE_i$. Since $p_i \in E_i$, by definition of $Y_0$ the edge $qE_i$ is in $Y_0$, and so we are done.

	 Now, if $f$ is some edge of  $\mathbf{T}$ incident to the vertex $p_{1}^{j_i}B_i$, $j_i=0,\dots, k_i-1, \: i=2,\dots, l$, then by Bass-Serre theory $f=gE_i$, where $g = p_{1}^{j_i}b, \: b \in B_i$, so $f=  p_{1}^{j_i}bE_i$, and we can suppose $b$ does not contain $p_i$, so $b=w(c, x_{i,1},\dots, x_{i,v_i})$. Then, by definition of $H$, $p_{1}^{j_i}b(p_{1}^{j_i})^{-1}=h \in H$, so $f=p_{1}^{j_i}bE_i=hp_{1}^{j_i}E_i$ is in the same $H$-orbit as $p_{1}^{j_i}E_i$, which is in $Y_0$ by definition. The case when $f$ is some edge of  $\mathbf{T}$ incident to the vertex  $p_{l}^{j_1}B_1$, $j_1=0,\dots, k_1-1$ is analogous.
	
	  This proves that $Y_0$ is the fundamental domain under the action of $H$ on $\mathbf{T}$.
Let $H \cong \pi_1(\mathbb{Y})$ be the induced splitting of $H$ as a fundamental group of a graph of groups, corresponding to the induced action of $H$ on $\mathbf{T}$. Let $Y$ be the underlying graph of $\mathbb{Y}$. Let $\pi: \mathbf{T} \rightarrow Y$ be the natural projection morphism.
	
	   It follows that the morphism $\pi$ restricted to $Y_0$ induces an isomorphism of graphs $\pi_{Y_0}: Y_0 \rightarrow Y$, and the vertex group at $\pi(v)$ is equal to the stabilizer of the vertex $v$ for every vertex $v$ of $Y_0$.
This means that $\mathbb{Y}$ is a star with $k_1+\dots +k_l$ leaves, $u_{1,1},\dots, u_{1,k_1}; \dots; u_{l,1},\dots, u_{l,k_l}$ and the center vertex $z$. The vertex group at $z$ is $H\cap \langle c, p_1, \ldots, p_l \rangle=\langle \hbox{generators of type} \: A, c\rangle$. The vertex groups $G_{u_{i, j_i}}$ at leaves $u_{i,j_i}$ are the following:
	$$
	\left\{
	\begin{array}{ll}
	G_{u_{i, j_i}}=H \cap B_i^{(p_{1}^{j_i})}= H \cap \langle c, p_i, x_{i,1},\dots, x_{i,v_i}\rangle^{p_{1}^{j_i}}, &  j_i=0,\dots, k_i-1, \: i=2,\dots, l;\\
	G_{u_{1, j_1}}=H \cap B_1^{(p_{l}^{j_1})}= H \cap \langle c, p_1, x_{1,1},\dots, x_{1,v_1}\rangle^{p_{l}^{j_1}}, &  j_1=0,\dots, k_1-1.
	\end{array}
	\right.
	$$
	
	The edge groups of $\mathbb{Y}$ are the corresponding intersections of vertex groups. Since $c \in H$ it follows that the vertex groups are direct products of a free group and the infinite cyclic group. Moreover, since, by construction, conjugates of $x_{i,1},\dots, x_{i,v_i-1}$ belong to $H$, it follows that every edge group of $\mathbb{Y}$ is the direct product of the cyclic group generated by $c$ and a subgroup of $\langle p_{1},\dots, p_{l}\rangle$.
	
	Let $S=T((v_1,k_1),\ldots, (v_l,k_l);q)$, where $q$ is as above. Let $p_{i,1},\ldots,p_{i,k_i}$ be the pivots of $T$ of valency $v_i+1$, $i=1, \ldots, l$. Let $x_{i, j_i, 1}, \ldots, x_{i, j_i, v_i}$ be all leaves of $S$ which are adjacent to the pivot $p_{i,j_i}$, for $j_i=1,\ldots,k_i$, $i=1,\ldots, l$. Denote the center of $S$ by $c'$, and the hairs of $S$ by $y_1, \ldots, y_q$.		
	
	It follows that the following map induces  an isomorphism from $\pi_1(\mathbb{Y})$ to $\GG(S)$:
	$$
	\begin{array}{l}
	c \mapsto c',  \\  h_i \mapsto y_i , \: i=1, \ldots, q, \\
	p_1^{k_l} \mapsto p_{1,1}  \\ p_1^{(p_l^{j_1-1})} \mapsto p_{1,j_1}, \: j_1=2, \ldots, k_1
	\\
	p_{i}^{k_1} \mapsto p_{i,1}, \: i=2, \ldots, l,  \\  p_i^{(p_1^{j_i-1})} \mapsto p_{i,j_i}, \: j_i=2, \ldots, k_i-1, \: i=2, \ldots, l  \\
	{\left(p_i^{k_l-k_i+1}\right)}^{p_1^{k_i-1}}  \mapsto p_{i, k_i}, \: i=2, \ldots, l
	\\
	x_{i,m_i}^{p_1^{r_i}} \mapsto  x_{i, r_i+1, m_i},    \: r_i=0,\dots, k_i-1, \:  i =1, \ldots, l$, $m_i=1, \ldots, v_i \\
	x_{1,m_1}^{p_l^s} \mapsto 	x_{1, s+1, m_1}, \: s=0,\dots, k_1-1, \: m_1 = 1, \ldots, v_1.
	\end{array}
	$$
	 In other words, this means that $\pi_1(\mathbb{Y})$ is the reduced centralizer splitting of $\GG(S)$. We conclude that $H\simeq \GG(S)$.
\end{proof}

\begin{prop} \label{prop:2}
	Let $T=T((v_1,k_1),\dots, (v_l,k_l);p)$ and $S=T((v_1,k_1),\dots, (v_l,k_l); q)$ be two trees of diameter 4. Then there exists $r$ such that $\GG(R)$ is a finite index subgroup of both $\GG(S)$ and $\GG(T)$, where $R=T((v_1,k_1),\dots, (v_l,k_l);r)$. In particular, $\GG(S)$ and $\GG(T)$ are commensurable.
\end{prop}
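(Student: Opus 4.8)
The plan is to produce, for the tree $T=T((v_1,k_1),\dots,(v_l,k_l);p)$, an explicit family of finite-index subgroups that are themselves RAAGs defined by trees with the \emph{same} pivot data $(v_i,k_i)$ but a prescribed number of hairs, to do the same for $S$, and then to choose the two indices so that the resulting hair numbers agree.

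Write $K=k_1+\dots+k_l$ for the number of pivots and recall the reduced centralizer splitting: $\GG(T)$ is the fundamental group of a star of groups with central vertex group $C_{\GG}(c)=\langle c\rangle\times F_{K+p}$ (free on the $K$ pivots and the $p$ hairs), pivot vertex groups $C_{\GG}(p_j)\cong\BZ\times F_{v_j+1}$, and edge groups $\langle c,p_j\rangle\cong\BZ^2$; likewise for $\GG(S)$ with $F_{K+q}$. For $n\ge1$ I would consider the homomorphism $\Phi_n\colon\GG(T)\to\BZ_n$ sending every pivot generator to $1$ and all remaining canonical generators (the center, the leaves, the hairs) to $0$ (this is well defined, since the defining relations are commutators and die in $\BZ_n$), and set $H_n=\ker\Phi_n$, a subgroup of index $n$.

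The core step is to identify $H_n$ by letting it act on the Bass-Serre tree of the reduced centralizer splitting, exactly as in Lemma \ref{bs} and in the proof of Proposition \ref{prop:1}. Since $\Phi_n$ restricts onto $\BZ_n$ already on each pivot centralizer and on the central vertex group (in every case through a pivot), every vertex and every edge of the splitting consists of a single $H_n$-orbit, so the induced quotient is again a star. Its pivot vertex group at $p_j$ is $H_n\cap C_{\GG}(p_j)=\langle p_j^{\,n}\rangle\times F(c,x_{j,1},\dots,x_{j,v_j})\cong\BZ\times F_{v_j+1}$, so each pivot survives as a single conjugacy class of the same valency $v_j$; its central vertex group is $\langle c\rangle\times\ker(F_{K+p}\to\BZ_n)$, whose free part has rank $n(K+p-1)+1$ by the Schreier formula. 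The point of sending \emph{all} pivots to the same generator is that this preserves the pivot data while inflating the central free group, the extra Schreier generators being products of distinct pivots, which have abelian centraliser $\langle c\rangle\times\BZ$ and hence appear as hairs. Reading off the underlying tree as at the end of the proof of Proposition \ref{prop:1}, I expect $H_n\cong\GG\big(T((v_1,k_1),\dots,(v_l,k_l);r)\big)$ with $r=n(K+p-1)+1-K$.

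Applying the same construction to $\GG(S)$ gives finite-index subgroups isomorphic to $\GG(T((v_i,k_i);m(K+q-1)+1-K))$. Choosing $n=K+q-1$ and $m=K+p-1$ (both at least $1$, since $K\ge2$) makes both hair numbers equal to $r=(K+p-1)(K+q-1)+1-K\ge0$, so $\GG(R)$ with $R=T((v_1,k_1),\dots,(v_l,k_l);r)$ is a finite-index subgroup of both $\GG(T)$ and $\GG(S)$, as required. The step that requires genuine care — and where the construction could go wrong — is checking that the quotient graph of groups really is the reduced centralizer splitting of a diameter-$4$ tree RAAG: one must verify that $p_1^{\,n},\dots,p_K^{\,n}$ extend to a free basis of $\ker(F_{K+p}\to\BZ_n)$, so that the central vertex group splits into exactly $K$ pivot-connections plus $r$ honest hairs, and that no unforeseen commutations arise among the new generators. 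This is the analogue of the bookkeeping closing the proof of Proposition \ref{prop:1}, and it is the main obstacle.
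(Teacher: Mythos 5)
Your proposal is correct and takes essentially the same route as the paper: your kernel $H_n=\ker\Phi_n\le\GG(T)$ with $n=K+q-1$ is literally the paper's subgroup $B_p$, since the paper's cover of the rose (each pivot generator labelling one cycle through all $K+q-1$ vertices, hair generators labelling loops at every vertex) is exactly the Schreier graph of your homomorphism $F_{K+p}\to\BZ_n$, so its full preimage under the retraction coincides with your kernel, and symmetrically for $\GG(S)$. Both arguments then identify the subgroup via its induced splitting over the Bass--Serre tree of the reduced centraliser splitting and arrive at the same hair count $r=(K+p-1)(K+q-1)+1-K$; even the step you flag as the main obstacle --- that $p_1^{\,n},\dots,p_K^{\,n}$ extend to a free basis of $\ker(F_{K+p}\to\BZ_n)$, which does hold by elementary Nielsen transformations on the Schreier generators --- is asserted at the same level of detail in the paper.
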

\begin{proof}
	Let $K=k_1+\dots+ k_l$. Since the group $\GG(S)$ (the group $\GG(T)$ correspondingly) retracts onto the free subgroup $F_{K+q}$ ($F_{K+p}$ correspondingly) generated by pivots and hair vertices, the full preimage of a subgroup of $F_{K+q}$ (of $F_{K+p}$ correspondingly) of finite index  $I$ is an index $I$ subgroup of $\GG(S)$ ($\GG(T)$, correspondingly).
	
	We define subgroups of $F_{K+q}$ and $F_{K+p}$ via covers of bouquet of $K+q$ and $K+p$ circles correspondingly. The subgroup $A_q$ of $F_{K+q}$ is defined as follows. Take $p+K-1$ points $a_1,\dots, a_{p+K-1}$ and for every pivot generator of $F_{K+q}$  add a cycle $(a_1,\dots, a_{p+K-1})$ of length $p+K-1$ labelled by this generator. We complete the obtained graph to a cover of $F_{K+q}$ by adding $q$ loops labelled by the hair generators of $F_{K+q}$ at every vertex $a_i$, $i=1,\dots, p+K-1$. The subgroup $A_p$ of $F_{K+p}$ is defined in a similar fashion.
	
	We note that $A_q$ and $A_p$ are free subgroups of index $p+K-1$ and $q+K-1$ in $F_{K+q}$ and $F_{K+p}$ correspondingly, and both have rank $(q+K-1)(p+K-1)+1$.
	
	Define the subgroups $B_q< \GG(S)$ and $B_p<\GG(T)$ as the full preimages of  the subgroups $A_q$ and $A_p$  correspondingly.
	Then $B_q$ has index $p+K-1$ in $\GG(S)$, and $B_p$ has index $q+K-1$ in $\GG(T)$. We claim that the groups $B_q$ and $B_p$ are isomorphic to $\GG(R)$, where $R=T((v_1,k_1),\dots, (v_l,k_l);r)$ for $r=(q+K-1)(p+K-1)+1-K$.
	
	Indeed, since $A_q$ has rank $(q+K-1)(p+K-1)+1$, one can see that $A_q$ has a free basis which includes the powers of all the $K$ pivot generators, as well as some other elements $h_1, \ldots, h_r$, for $r$ as above, and similar for $A_p$.
	
	Similar to the proof of Proposition \ref{prop:1}, it follows that both $B_q$ and $B_p$ split as fundamental groups of the star of groups with $K=k_1+\dots +k_l$ leaves, which are the induced splittings with respect to the reduced centralizer splittings of $\GG(S)$ and $\GG(T)$ respectively, and that these splittings of $B_q$ and $B_p$ are both isomorphic to the reduced centralizer splitting of $\GG(R)$.
	  Then the claim of the lemma holds with $r=(q+K-1)(p+K-1)+1-K$.
\end{proof}

\begin{theorem}\label{thm:char}
	Let $T=T((m_1,k_1), \dots, (m_l,k_l);p)$ and $S=T((n_1,r_1),\dots, (n_l,r_l);q)$ be two trees of diameter 4. Suppose that $\frac{m_i}{n_i}=\frac{m_j}{n_j}$ for all $i,j=1,\dots,l$. Then the group $\GG(T)$ is commensurable to $\GG(S)$.
\end{theorem}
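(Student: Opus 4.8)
The plan is to strip away all the auxiliary data — the multiplicities $k_i,r_i$ and the hair counts $p,q$ — using the two propositions just proved, reducing to ``basic'' trees carrying a single pivot of each degree and no hairs, and then to relate two such basic trees with proportional degree sequences by an explicit degree-rescaling cover. First I would record the reduction: Propositions \ref{prop:1} and \ref{prop:2}, together with transitivity of commensurability, imply that \emph{any} tree of diameter $4$ with a fixed degree set $\{d_1,\dots,d_l\}$ is commensurable to the basic tree $T_0=T((d_1,1),\dots,(d_l,1);0)$ (respectively $T((d_1,2);0)$ when $l=1$), regardless of multiplicities and hairs. Indeed, Proposition \ref{prop:1} gives $\GG(T_0)>_{fi}\GG(T((d_1,k_1),\dots,(d_l,k_l);q_0))$ for some particular $q_0$, and Proposition \ref{prop:2} then lets us replace $q_0$ by any hair count. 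Applying this to $T$ and to $S$ yields
\[
\GG(T)\sim \GG(T((m_1,1),\dots,(m_l,1);0)),\qquad \GG(S)\sim \GG(T((n_1,1),\dots,(n_l,1);0)),
\]
so it remains to treat the basic case.

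The key new ingredient, and the step I expect to be the main obstacle, is the claim that for any $N\ge 1$ the group $\GG=\GG(T((d_1,1),\dots,(d_l,1);0))$ contains an index-$N$ subgroup isomorphic to $\GG(T((Nd_1,1),\dots,(Nd_l,1);0))$. To prove it I would take the homomorphism $\phi\colon\GG\to\BZ_N$ sending the central generator $c$ to $1$ and every other canonical generator to $0$ (this respects all commutation relations, as $\BZ_N$ is abelian), set $K=\ker\phi$, and analyse the action of $K$ on the Bass-Serre tree $\mathbf T$ of the reduced centraliser splitting of $\GG$, exactly as in the proof of Proposition \ref{prop:1}. Since $\phi$ is already surjective on each of $C(c)$, $C(p_i)$ and $\langle c,p_i\rangle$ (each contains $c$), there is a single $K$-orbit of centre vertices, one $K$-orbit of pivot vertices of each type, and one $K$-orbit of edges of each type; hence the quotient graph of groups is again a star with $l$ leaves. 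Computing the induced vertex groups gives $K\cap C(c)=\langle c^N\rangle\times F(p_1,\dots,p_l)\cong\BZ\times F_l$ at the centre and, by the Schreier formula applied to the index-$N$ subgroup of $F(c,x_{i,1},\dots,x_{i,d_i})$ cut out by ``total $c$-exponent $\equiv 0\bmod N$'', the pivot group $K\cap C(p_i)=\langle p_i\rangle\times F_{Nd_i+1}$ — a pivot of degree $Nd_i+1$. Matching these vertex groups, the edge groups $\langle c^N,p_i\rangle\cong\BZ^2$, and their inclusions with the reduced centraliser splitting of $T((Nd_1,1),\dots,(Nd_l,1);0)$ identifies $K$ with $\GG(T((Nd_1,1),\dots,(Nd_l,1);0))$. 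The real work here is the orbit-counting and fundamental-domain bookkeeping, carried out as for Proposition \ref{prop:1}, verifying that no extra orbits of vertices or edges arise and that the rescaled tree has no hairs.

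Finally I would invoke the hypothesis. Writing $\frac{m_i}{n_i}=\frac{a}{b}$ in lowest terms (a constant independent of $i$), we have $bm_i=an_i$ with $\gcd(a,b)=1$, so $a\mid m_i$ and $b\mid n_i$; setting $s_i=m_i/a=n_i/b$ gives $m_i=as_i$, $n_i=bs_i$, and hence $bm_i=abs_i=an_i$ for every $i$. Applying the rescaling step with $N=b$ to the basic tree of $T$ and with $N=a$ to the basic tree of $S$,
\[
\GG(T((m_1,1),\dots,(m_l,1);0))\sim \GG(T((abs_1,1),\dots,(abs_l,1);0))\sim \GG(T((n_1,1),\dots,(n_l,1);0)),
\]
since both basic trees rescale to the \emph{same} degree sequence $(abs_1,\dots,abs_l)$. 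Combined with the reduction of the first paragraph and transitivity, this gives $\GG(T)\sim\GG(S)$. The case $l=1$ is handled identically, starting from $T((d,2);0)$ and observing that the cover $\phi$ preserves the pivot multiplicity $2$.
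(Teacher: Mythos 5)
Your proposal is correct and takes essentially the same approach as the paper: reduce to the basic trees $T((d_1,1),\dots,(d_l,1);0)$ (resp.\ $T((d,2);0)$) via Propositions \ref{prop:1} and \ref{prop:2}, then pass to the kernel of the homomorphism onto $\BZ_N$ sending the center to $1$ and all other canonical generators to $0$, identified via Bass--Serre theory with the RAAG on the degree-rescaled tree. The only (immaterial) differences are that you rescale symmetrically by coprime $a,b$ where the paper uses an arbitrary representation $m/n$ of the common ratio, and that you spell out the orbit-counting and vertex-group computation that the paper compresses into ``it is not difficult to see.''
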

\begin{proof}
	By Propositions \ref{prop:1} and \ref{prop:2} it suffices to prove the statement in the case when $l \geq 2$, $k_i=r_j=1$ for $i,j=1,\dots, l$ and $p=q=0$, and in the case when $l=1, \: k_1=r_1=2$ and $p=q=0$.
	
	Consider the first case, the proof in the second case is analogous. Let $\frac{m_i}{n_i}=\frac{m}{n}$. Consider the homomorphism $f_n:\GG(T)\to \BZ_n$ induced by the map $c_T \mapsto 1$ and $x\mapsto 0$, where $c_T$ is the center of the tree $T$ and $x$ is any other canonical generator of $\GG(T)$. Let $\GG_n$ be the kernel of $f_n$. Similarly, let $f_m:\GG(S)\to \BZ_m$ be the homomorphism induced by the map $c_S\mapsto 1$ and $y\mapsto 0$, where $c_S$ is the center of the tree $S$ and $y$ is any other canonical generator of $\GG(S)$, and let $\GG_m$ be the kernel of $f_m$.
	
	By Bass-Serre theory, it is not difficult to see that $\GG_n \simeq \GG(T((nm_1,1), \dots, (nm_l,1);0))$ and $\GG_m \simeq \GG(T((m n_1,1), \dots, (m n_l,1);0))$. Since $n m_i=m n_i$ for all $i=1, \dots, l$, it follows that $\GG_n\simeq \GG_m$ and hence $\GG(T)$ and $\GG(S)$ are commensurable.
\end{proof}

	We now turn our attention to the description of minimal elements in the commensurability classes. We first record that the RAAGs defined by paths of length $3$ and $4$ are commensurable. This fact is not new and was mentioned to us by T.~Koberda.
	
	\begin{prop} \label{prop:a1}
		$\mathbb{G}(P_3)$ is commensurable with $\mathbb{G}(P_4)$.
	\end{prop}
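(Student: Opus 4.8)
The plan is to realise the right-angled Artin group of the diameter-$4$ path as a group commensurable to an explicit finite-index subgroup of the diameter-$3$ path group, and then to strip off a superfluous hair vertex using Proposition \ref{prop:2}. Write the diameter-$3$ path as $a-b-c-d$, so that $\GG(P_3)=\langle a,b,c,d\mid [a,b]=[b,c]=[c,d]=1\rangle$, and recall that the diameter-$4$ path is $T((1,2);0)$, i.e. $\GG(P_4)=\GG(T((1,2);0))$.

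First I would pass to the index-two subgroup $H=\ker\phi$, where $\phi\colon\GG(P_3)\to\BZ_2$ sends the leaf $a\mapsto 1$ and $b,c,d\mapsto 0$. Using the Schreier transversal $\{1,a\}$, a Reidemeister--Schreier computation yields the generating set $\{a^2,\,b,\,c,\,d,\,c^{a},\,d^{a}\}$ of $H$ (the generator $b^{a}$ coincides with $b$ since $[a,b]=1$). The commutation relations inherited from $\GG(P_3)$ are exactly $[a^2,b]=1$, $[b,c]=1$, $[b,c^{a}]=1$, $[c,d]=1$ and $[c^{a},d^{a}]=1$: indeed $b$ commutes with both $a$ and $c$, hence with $c^{a}$, while conjugating $[c,d]=1$ by $a$ gives $[c^{a},d^{a}]=1$. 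These are precisely the edges of the tree $T^{*}$ whose central vertex $b$ is joined to the single hair $a^{2}$ and to the two degree-two pivots $c$ and $c^{a}$, which in turn carry the leaves $d$ and $d^{a}$; that is, $T^{*}=T((1,2);1)$.

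I claim that $H\cong\GG(T((1,2);1))$, and the step I expect to be the main obstacle is verifying that $H$ has no relations beyond those listed, since a finite-index subgroup of a RAAG need not itself be a RAAG. Rather than manipulate relators, I would argue as in the proof of Proposition \ref{prop:1}: restrict the action of $\GG(P_3)$ on the Bass--Serre tree $\mathbf T$ of its reduced centraliser splitting $\GG(P_3)=C(b)*_{\langle b,c\rangle}C(c)$ to the subgroup $H$, exhibit a fundamental domain for the induced $H$-action, and read off the resulting graph of groups. Since $\phi$ is onto on $C(b)=\langle b\rangle\times F(a,c)$ but trivial on $C(c)$ and on the edge group $\langle b,c\rangle$, the quotient graph is a star with one central $b$-vertex (group $\langle b\rangle\times\langle a^{2},c,c^{a}\rangle\cong\BZ\times F_3$) and two $c$-vertices (groups $C(c)$ and $C(c)^{a}$, each $\cong\BZ\times F_2$), joined by the edge groups $\langle b,c\rangle$ and $\langle b,c^{a}\rangle$. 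This is exactly the reduced centraliser splitting of $\GG(T((1,2);1))$, which simultaneously identifies $H$ with that RAAG and rules out any hidden relations.

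Finally, Proposition \ref{prop:2} applies to the two trees $T((1,2);1)$ and $T((1,2);0)$: they have identical pivot data $(1,2)$ and differ only in their hair counts $1$ and $0$, so they admit a common finite-index subgroup, whence $\GG(T((1,2);1))$ is commensurable with $\GG(T((1,2);0))=\GG(P_4)$. Since $H\cong\GG(T((1,2);1))$ has finite index in $\GG(P_3)$ and commensurability is transitive, we conclude that $\GG(P_3)$ is commensurable with $\GG(P_4)$, as required.
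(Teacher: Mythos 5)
Your proof is correct, and it diverges from the paper's in an instructive way. On the $\GG(P_3)$ side you and the paper do exactly the same thing: the paper's subgroup $H=\ker\varphi$ (with the leaf $p_2\mapsto 1$) is, up to relabelling the path, your $\ker\phi$, and its Schreier generators $\{x,p_1,c,p_1^{p_2},x^{p_2},p_2^2\}$ are your $\{d,c,b,c^a,d^a,a^2\}$; both identify this subgroup with $\GG(T((1,2);1))$. The difference is in how that identification is justified: the paper simply asserts it as a ``straightforward application of Reidemeister--Schreier,'' whereas you rightly flag the absence of hidden relations as the real issue and settle it by restricting the action on the Bass--Serre tree of the reduced centraliser splitting $C(b)*_{\langle b,c\rangle}C(c)$, exhibiting the quotient graph of groups and matching it with the reduced centraliser splitting of $\GG(T((1,2);1))$ --- this is more careful than the paper at precisely the point where finite-index subgroups of RAAGs can fail to be RAAGs, and it mirrors the technique of Proposition \ref{prop:1}. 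Where you genuinely depart from the paper is on the $\GG(P_4)$ side: the paper runs a second, parallel index-two computation ($H'=\ker\varphi'$ with both pivots sent to $1$) and shows $H'\cong\GG(T((1,2);1))$ as well, so that the two groups visibly share a common index-two subgroup; you instead quote Proposition \ref{prop:2} with pivot data $(1,2)$ and hair counts $1$ and $0$, plus transitivity of abstract commensurability. This is legitimate (Proposition \ref{prop:2} is proved before and independently of Proposition \ref{prop:a1}, so there is no circularity), and in fact with $K=2$, $p=1$, $q=0$ its formula gives $r=(q+K-1)(p+K-1)+1-K=1$, i.e.\ it reproduces exactly the paper's conclusion that $\GG(T((1,2);1))$ sits with index two inside $\GG(P_4)$. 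The trade-off: the paper's proof is fully self-contained and elementary (two Reidemeister--Schreier computations), while yours replaces the second computation by the heavier, already-established hair-removal machinery and a more rigorous Bass--Serre verification of the first.
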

	\begin{proof}
		Let $\mathbb{G}(P_3)=\langle x, p_1, c,p_2\mid [x,p_1], [p_1,c],[c,p_2]\rangle$ and let $\varphi: \mathbb{G}(P_3) \to \mathbb{Z} / (2\mathbb{Z})$ be the homomorphism defined by the map
		$$
		x \to 0 \ \
		p_1 \to 0 \ \
		c \to 0 \ \
		p_2 \to 1
		$$
		Set $H = \ker \varphi$. It is clear that $H$ is an index 2 subgroup of $\mathbb{G}(P_3)$.
		
		Let $\varphi': \mathbb{G}(P_4) \to \mathbb{Z} / (2\mathbb{Z})$ be the homomorphism defined by the map
		$$
		x_1 \to 0 \ \
		p_1 \to 1 \ \
		c \to 0 \ \
		p_2 \to 1 \ \
		x_2 \to 0
		$$
		and let $H' = \ker \varphi'$, where $x_1$ and $x_2$ are the leaves of $P_4$ at the pivots $p_1$ and $p_2$ correspondingly. It is clear that $H'$ is an index 2 subgroup of $\mathbb{G}(P_4)$.
		
		Straightforward application of Reidemeister-Schreier technique shows that $H=\langle x,p_1,c,p_1^{p_2},x^{p_2}, p_2^2 \rangle$ and $H'=\langle x_1, p_1^2, c, p_2^2, x_2, p_1p_2\rangle$ are isomorphic to $\mathbb{G}(\Delta)= \langle a,b,c,d,e,f \mid [a,b]=1, [b,c]=1, [c,d]=1, [d,e]=1, [c,f]=1\rangle$.
	\end{proof}

 We deduce the following results.

 \begin{theorem}[Characterisation of commensurability classes] \label{thm:crit}
 	Let $T$ and $T'$ be two finite trees of diameter 4,  $T=T((d_1,k_1), \dots, (d_l,k_l);q)$ and $T'=T((d_1',k_1'), \dots, (d_{l'}',k_{l'}');q')$. Let $\GG=\mathbb{G}(T)$ and $\GG'=\mathbb{G}(T')$. Consider the sets $M=M(T)$, $M'=M(T')$.
		Then $\GG$ and $\GG'$ are commensurable if and only if $M$ and $M'$ are commensurable. 	
 \end{theorem}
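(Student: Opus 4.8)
The plan is to assemble the theorem from the results already in hand: the reverse implication is essentially Theorem \ref{thm:char}, while the forward implication follows by reducing each tree to a canonical commensurable representative and then invoking Theorem \ref{th3}. Throughout I use that abstract commensurability is an equivalence relation, in particular transitive.

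\emph{Reverse implication.} Suppose $M$ and $M'$ are commensurable, so $|M|=|M'|=l$ and, writing $M=\{m_1<\dots<m_l\}$ and $M'=\{n_1<\dots<n_l\}$, the ratios $m_i/n_i$ are all equal. Recording the pivot multiplicities and hairs of the two trees we may write $T=T((m_1,k_1),\dots,(m_l,k_l);q)$ and $T'=T((n_1,r_1),\dots,(n_l,r_l);q')$ with $m_i/n_i=m_j/n_j$ for all $i,j$. This is exactly the hypothesis of Theorem \ref{thm:char}, which gives that $\GG(T)$ and $\GG(T')$ are commensurable; the degenerate case $l=1$ is included, the ratio condition then being vacuous.

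\emph{Reduction to canonical form.} For the forward implication I first show that every tree $T$ of diameter $4$ with $M(T)=\{d_1<\dots<d_l\}$ is commensurable to a canonical representative $T_0$, where $T_0=T((d_1,1),\dots,(d_l,1);0)$ if $l\geq 2$ and $T_0=T((d_1,2);0)$ if $l=1$. For $l\geq 2$, Proposition \ref{prop:1} produces a finite index subgroup of $\GG(T_0)$ isomorphic to $\GG(S)$ with $S=T((d_1,k_1),\dots,(d_l,k_l);q')$ for some $q'$; since $S$ and $T$ share the same pivot data and differ only in the number of hairs, Proposition \ref{prop:2} shows that $\GG(S)$ is commensurable to $\GG(T)$, and transitivity gives the claim. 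For $l=1$, write $T=T((d,k);q)$ with $k\geq 2$: Proposition \ref{prop:2} removes the hairs to give commensurability with $\GG(T((d,k);0))$, and the second part of Proposition \ref{prop:1} passes from $T((d,k);0)$ to $T((d,2);0)$.

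\emph{Forward implication and main difficulty.} Assume $\GG(T)$ and $\GG(T')$ are commensurable. Replacing them by their canonical representatives $T_0,T_0'$ (still commensurable by transitivity), it suffices to deduce commensurability of $M$ and $M'$. If $|M|,|M'|\geq 2$, both representatives have the shape $T((\cdot,1),\dots,(\cdot,1);0)$, and the first claim of Theorem \ref{th3} forces $|M|=|M'|$ together with constant ratios, that is, $M$ and $M'$ are commensurable. If $|M|=|M'|=1$, the sets are commensurable trivially. If exactly one of them is a singleton, then one representative is $T((d,2);0)$ and the other is $T((\cdot,1),\dots,(\cdot,1);0)$ with at least two pivot degrees, and the ``moreover'' part of Theorem \ref{th3} says these are not commensurable, contradicting our assumption; hence this case does not occur. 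All the genuine content sits in Theorem \ref{th3} and in Propositions \ref{prop:1}--\ref{prop:2} together with Theorem \ref{thm:char}; the only obstacle in the assembly is organisational, namely to treat the degenerate situations uniformly (the single-pivot-degree case $l=1$ with forced multiplicity $\geq 2$, and the elimination of hair vertices), so that each commensurable pair is matched exactly against the hypotheses of Theorem \ref{th3}, including its ``moreover'' clause, and of Theorem \ref{thm:char}.
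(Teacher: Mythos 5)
Your assembly follows the same route as the paper: the paper's own proof of this theorem is exactly ``combine Theorem \ref{th3} with Theorem \ref{thm:char}'', and your reduction to canonical representatives via Propositions \ref{prop:1} and \ref{prop:2} is precisely what underlies Theorem \ref{thm:char} (indeed you could skip re-invoking those propositions: applying Theorem \ref{thm:char} with all ratios equal to $1$ already gives $\GG(T((d_1,k_1),\dots,(d_l,k_l);q)) \sim \GG(T((d_1,1),\dots,(d_l,1);0))$, and likewise in the $l=1$ case). The reverse implication and the two main cases of the forward implication ($|M|,|M'|\geq 2$, and $|M|=|M'|=1$) are handled correctly.

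There is, however, one concrete unjustified step, in the mixed case where exactly one of $M$, $M'$ is a singleton. You invoke the ``moreover'' clause of Theorem \ref{th3} for the canonical representative $T((d,2);0)$, but that clause is stated only for $S=T((m,2);0)$ with $m\geq 2$. If $M=\{1\}$ --- for instance $T=P_5=T((1,2);0)$, or any tree of diameter $4$ all of whose pivots have degree $2$ --- then your canonical representative is $T((1,2);0)$, which the cited statement does not cover, so as written the contradiction is not available. The repair is short: by Theorem \ref{thm:char} with $l=1$ (where the ratio hypothesis is vacuous), $\GG(T((1,2);0))$ is commensurable with $\GG(T((2,2);0))$, and the ``moreover'' clause does apply to $T((2,2);0)$, giving the desired contradiction by transitivity. (Alternatively, one can observe that the proof of the ``moreover'' part of Theorem \ref{th3} nowhere uses $m \geq 2$, but the theorem as stated does not cover $m=1$, so some such bridging sentence is needed.) With that sentence added, your proof is complete and coincides with the paper's intended argument.
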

 \begin{proof}
 	It is a consequence of Theorem \ref{th3} and Theorem \ref{thm:char}.
 \end{proof}
	
\begin{corollary}
	The groups $\GG=\mathbb{G}(P_{m_1,m_2})$ and $\GG'=\mathbb{G}(P_{n_1,n_2})$ are commensurable if and only if $\frac{m_1}{n_1}=\frac{m_2}{n_2}$.
\end{corollary}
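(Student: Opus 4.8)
The plan is to derive this directly from Theorem \ref{thm:crit}, so that essentially all the work reduces to translating the commensurability of the ordered sets $M=M(P_{m_1,m_2})$ and $M'=M(P_{n_1,n_2})$ into the numerical condition $\frac{m_1}{n_1}=\frac{m_2}{n_2}$. First I would unwind the definition of $P_{m_1,m_2}$: since we assume $m_1 \le m_2$, we have $P_{m_1,m_2}=T((m_1,1),(m_2,1);0)$ when $m_1<m_2$, and $P_{m_1,m_1}=T((m_1,2);0)$ when $m_1=m_2$. Consequently $M=\{m_1<m_2\}$ has cardinality $2$ in the first case and $M=\{m_1\}$ has cardinality $1$ in the second; the analogous statements hold for $M'$ and the $n_i$. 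Thus $|M|=2$ (resp. $|M|=1$) corresponds exactly to $m_1<m_2$ (resp. $m_1=m_2$).

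Next I would prove the two implications. For the implication that $\frac{m_1}{n_1}=\frac{m_2}{n_2}$ forces commensurability, I set $c=\frac{m_1}{n_1}=\frac{m_2}{n_2}$ and observe that $m_1=m_2$ if and only if $n_1=n_2$, since both are equivalent to $c n_1 = c n_2$. Hence $M$ and $M'$ have the same cardinality. If both have cardinality $1$, any two singletons of positive integers are commensurable, while if both have cardinality $2$ then $M=\{m_1<m_2\}=c\{n_1<n_2\}=cM'$; in either case $M$ and $M'$ are commensurable ordered sets, and Theorem \ref{thm:crit} yields commensurability of $\GG$ and $\GG'$.

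Conversely, assuming $\GG$ and $\GG'$ commensurable, Theorem \ref{thm:crit} gives that $M$ and $M'$ are commensurable; in particular $|M|=|M'|$, so by the correspondence above either $m_1=m_2$ and $n_1=n_2$, or $m_1<m_2$ and $n_1<n_2$. In the former case $\frac{m_1}{n_1}=\frac{m_2}{n_2}$ holds trivially because both sides coincide, and in the latter case the definition of commensurability of ordered sets supplies a rational $c$ with $\frac{m_1}{n_1}=\frac{m_2}{n_2}=c$, which is exactly the claim.

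As there is no serious analytic or algebraic difficulty here, the only point requiring care is the bookkeeping between the two encodings $T((m_1,1),(m_2,1);0)$ and $T((m_1,2);0)$: one must not overlook the degenerate case $m_1=m_2$, in which $M$ collapses to a singleton and the numerical identity $\frac{m_1}{n_1}=\frac{m_2}{n_2}$ becomes automatic. Handling this case in parallel with the generic case $m_1<m_2$ is the main, and rather minor, thing to keep track of.
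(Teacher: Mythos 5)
Your proof is correct and is essentially the paper's own argument: the paper states this corollary immediately after Theorem \ref{thm:crit} with no separate proof, precisely because it is the direct specialisation you carry out, translating commensurability of $M(P_{m_1,m_2})$ and $M(P_{n_1,n_2})$ into $\frac{m_1}{n_1}=\frac{m_2}{n_2}$. Your explicit bookkeeping of the degenerate case $m_1=m_2$ (where $M$ is a singleton and the encoding becomes $T((m_1,2);0)$) is exactly the right point to check and is handled correctly.
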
	
	
	\begin{theorem}[Minimal RAAG in the commensurability class]\label{thm:minimal}
		Let $T=T((d_1,k_1), \dots, (d_l,k_l);q)$ be a finite tree of diameter 4. Let $\mathcal C(T)$ be the commensurability class of $\GG(T)$ and let $M=M(T)$ be as above, so $|M|=l$. Then the minimal RAAG that belongs to $\mathcal C(T)$ is either the RAAG defined by the tree $T'=T((d_1',1), \dots, (d_l',1);0)$, where $M(T')$ is minimal in the commensurability class of $M$, if $|M|>1$, or the RAAG defined by the path of diameter 3, that is $\GG(P_4)$, if $|M|=1$.
	\end{theorem}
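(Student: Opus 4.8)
The plan is to turn the statement into a vertex-minimisation problem over the whole class $\mathcal C(T)$, to solve it among trees of diameter $4$ using Theorem \ref{thm:crit}, and then to argue that nothing outside this family can do better.

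First I would record the elementary count: a tree $S=T((e_1,r_1),\dots,(e_{l'},r_{l'});q')$ has exactly $1+q'+\sum_{i=1}^{l'} r_i(1+e_i)$ vertices, coming from the centre, the $q'$ hairs, the $\sum_i r_i$ pivots and the $\sum_i r_i e_i$ leaves carried by the pivots. By Theorem \ref{thm:crit}, $\GG(S)\in\mathcal C(T)$ if and only if $M(S)$ is commensurable to $M$, that is $l'=l$ and $\{e_1<\dots<e_l\}=c\{d_1<\dots<d_l\}$ for some $c\in\mathbb{Q}_{>0}$. When $|M|=l>1$, minimising the count over such $S$ forces $q'=0$ and every $r_i=1$ (each only increases the count), leaving $1+l+\sum_i e_i$; and $\sum_i e_i=c\sum_i d_i$ is smallest exactly when $c$ is smallest. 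Writing $c$ in lowest terms and using that all $ce_i$ must be positive integers shows the denominator of $c$ divides $\gcd(d_1,\dots,d_l)$, so the minimum is realised by the minimal representative $\{d_1',\dots,d_l'\}$, i.e. by $T'=T((d_1',1),\dots,(d_l',1);0)$, which indeed lies in $\mathcal C(T)$ by Theorem \ref{thm:crit}.

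For $|M|=1$ the same optimisation among diameter-$4$ trees is minimised by $T((1,2);0)$, the path on five vertices; but Proposition \ref{prop:a1} shows $\GG(T((1,2);0))$ is commensurable to $\GG(P_4)$, the diameter-$3$ path, which has one generator fewer, so the candidate minimum drops to four. I would close this case by checking directly that none of the four RAAGs on at most three vertices, namely $F_3$, $\BZ^3$, $\BZ\times F_2$ and $\BZ\ast\BZ^2$, is even quasi-isometric to $\GG(P_4)$: the first is virtually free, the second virtually abelian, the third lies in the quasi-isometry class of stars, and the fourth is infinitely-ended, whereas $\GG(P_4)$ is one-ended and contains both $\BZ^2$ and $F_2$. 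Since commensurable groups are quasi-isometric, four generators is optimal here.

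The main obstacle is that "minimal" ranges over \emph{every} RAAG in $\mathcal C(T)$, so I must rule out smaller RAAGs defined by graphs other than diameter-$4$ trees. The plan has two parts. First, show that any $\GG(\Gamma)\in\mathcal C(T)$ has $\Gamma$ a tree: tree RAAGs contain neither $\BZ^3$ nor $F_2\times F_2$, both commensurability invariants forcing $\Gamma$ to be triangle- and square-free; RAAGs with cyclic defining graphs are not commensurable to tree RAAGs by the reduced extension graph argument of the present section; and one-endedness excludes disconnected $\Gamma$. Second, for $|M|>1$ exclude trees of every diameter other than $4$: diameter-$2$ trees are stars, giving $\BZ\times F_n$, which are not even quasi-isometric to $\GG(T)$, while the remaining diameters require re-running the linear-system analysis of Section \ref{sec:noncomm} with the set of vertex types enlarged to mix $T$ with the competing tree. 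The key structural point to exploit is that the centre of a diameter-$4$ tree makes its reduced centraliser graph a star carrying $l\geq 2$ distinct pivot-degree classes (whereas, by Corollary \ref{cor:centraliserstrees}, the non-abelian-centraliser data is an isomorphism invariant of finite-index subgroups), a rigidity that a tree of a different diameter cannot reproduce, exactly as the argument forcing the pivot counts to match in Theorem \ref{th3}. This cross-family exclusion, rather than the counting, is where the real work lies.
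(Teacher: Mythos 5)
Your executed core --- the vertex count $1+q'+\sum_i r_i(1+e_i)$, the minimisation via Theorem \ref{thm:crit} forcing $q'=0$, all $r_i=1$ and $c=1/\gcd(d_1,\dots,d_l)$, plus Proposition \ref{prop:a1} to drop from $T((1,2);0)$ to $\GG(P_4)$ when $|M|=1$ --- is exactly the paper's proof: the paper disposes of the theorem in one line, ``It is a consequence of Proposition \ref{prop:a1} and Theorem \ref{thm:crit}'', leaving the counting implicit. Your exclusion of the RAAGs on at most three vertices ($F_3$, $\BZ^3$, $\BZ\times F_2$, $\BZ\ast\BZ^2$, and the smaller ones) by quasi-isometry invariants is correct, and is more than the paper records for the $|M|=1$ case.

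The genuine gap is the $|M|>1$ cross-family exclusion, which you yourself flag as ``where the real work lies'' but do not carry out. Two of its sub-steps are non-routine. First, your reduction to trees is incomplete as stated: triangle- and square-freeness (via $\BZ^3$ and $F_2\times F_2$) together with connectedness still allows graphs with induced cycles of length at least $5$, and the ``reduced extension graph argument'' you invoke for those is only developed in the paper for trees --- Lemma \ref{l2} is stated and proved under the hypothesis that \emph{both} defining graphs are trees --- so extending it to cycle-containing graphs is an additional argument, not a citation. Second, and more seriously, excluding trees of diameter $3$ other than $P_4$ (e.g.\ the five-vertex diameter-$3$ tree) and trees of diameter at least $5$ cannot be done by simply ``re-running'' the analysis of Theorem \ref{th3}: that proof's entire case structure (vertex types splitting into the two families pivot/pivot with centre/centre, versus pivot/centre with centre/pivot) rests on both reduced trees $\widetilde{\Delta}$ and $\widetilde{\Delta}'$ being stars, and when one of them is a path of length at least $2$ the type bookkeeping, the quotient-graph structure, and the resulting linear systems all change and must be set up and solved afresh. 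To be fair: the paper's own proof is equally silent on everything outside diameter-$4$ trees and $P_4$, so if one reads ``minimal RAAG in $\mathcal C(T)$'' literally over all RAAGs, this lacuna belongs to the paper as much as to you; the difference is that you correctly promoted it to the decisive step of the argument and then left it as a plan rather than a proof.
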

	\begin{proof}
		It is a consequence of Proposition \ref{prop:a1} and Theorem \ref{thm:crit}.
	\end{proof}


\begin{thebibliography}{10}
		\bibitem[B72]{3} H.~Bass, \textit{The degree of polynomial growth of finitely generated nilpotent groups}, Proc. London Math. Soc. 3, 4 (1972), 603-614.
		\bibitem[BJN09]{5} J.~Behrstock, T.~Januszkiewicz, W.~Neumann, \textit{Commensurability and QI classification
		of free products of finitely generated abelian groups}. Proc. Am. Math. Soc. 137, 3 (2009), 811-813.
		\bibitem[BJN10]{BN2} J.~Behrstock, T.~Januszkiewicz, W~Neumann, \textit{Quasi-isometric classification of some high dimensional right-angled Artin groups},	Groups, Geom. Dynamics 4, 681-692, 2010.
		\bibitem[BN08]{BN} J.~Behrstock,  W~Neumann, \textit{Quasi-isometric classification of graph manifold
		groups}. Duke Math. J. 141, 2 (2008), 217-240.
		\bibitem[BM00]{17} M.~Burger, S.~Mozes, \textit{Lattices in product of trees}. Inst. Hautes Études Sci. Publ.
		Math., 92 (2000), 151-194.
		\bibitem[DJ00]{DJ} M.~Davis, T.~Januszkiewicz,  \textit{Right-angled Artin groups are commensurable with right-angled Coxeter
		groups}, J. Pure Appl. Algebra 153, 229-235 (2000)
		\bibitem[DW93]{13} P. Deligne and G. D. Mostow, \textit{Commensurabilities among lattices in $PU(1,n)$}, Annals of Mathematics Studies, \textbf{132}. Princeton University Press, 1993.
		\bibitem[DK93]{DK} G.~Duchamp, D.~Krob, {\em Partially Commutative Magnus Transformations}, Internat. J. Algebra Comput.  \textbf{3} (1993), 15-41.
		\bibitem[EKR05]{EKR}  E.~Esyp, I.~Kazachkov and V.~Remeslennikov, {\em
			Divisibility Theory  and Complexity of
			Algorithms for Free  Partially Commutative Groups}, 319--348, Contemp.
		Math., 378, Amer. Math. Soc., Providence, RI, 2005.
		 \bibitem[Ga16]{Gar} A.~Garrido, \textit{Abstract commensurability and the Gupta-Sidki group}, Groups Geom. Dyn. 10 (2016), 523-543.
		\bibitem[GrW03]{GrW} R.~Grigorchuk, J.~Wilson, \textit{A structural property concerning abstract commensurability of subgroups}. J. London Math. Soc. (2), 68(3):671-682, 2003.
		\bibitem[Gr81]{30} M.~Gromov, \textit{Groups of polynomial growth and expanding maps}. Inst. Hautes Études Sci. Publ.
		Math., 53 (1981), 53-78.
		\bibitem[Gr93]{31} M.~Gromov, \textit{Geometric group theory, vol. 2: Asymptotic invariants of infinite groups.} Lond. Math. Soc. Lecture Notes 182, (1993), 1-295.
		\bibitem[H16]{Huang} J.~Huang, \textit{Commensurability of groups quasi-isometric to RAAGs}, arXiv:1603.08586v2
		\bibitem[KPS73]{47} A.~Karrass, A. Pietrowski, D.~Solitar, \textit{Finite and infinite cyclic extensions of
			free groups},  J. Australian Math. Soc 16, 04 (1973), 458-466.
	\bibitem[KK13]{KK} S.~Kim,  T.~Koberda, \textit{Embedability between right-angled Artin groups}, Geom. Topol. 17 (2013), 493-530.
	\bibitem[KK14]{KKi} S.~Kim,  T.~Koberda, \textit{The geometry of the curve graph of a right-angled Artin group}, Int. J. Algebra Comput. 24 (2014), 121-169.
		\bibitem[Mar73]{20} G. A. Margulis, \textit{Arithmeticity of nonuniform lattices}, Funkcional. Anal. i Prilozen., 7(3):88-89, 1973.
		\bibitem[Sch95]{25} R. E. Schwartz, \textit{The quasi-isometry classification of rank one lattices}, Publ. Math. Inst. Hautes Etudes Sci., 82:133-168, 1995.
	\bibitem[Ser89]{Serv} H.~Servatius, \emph{Automorphisms of Graph Groups}, J. Algebra, \textbf{126} (1989), no. (1), pp. 34-60.
		\bibitem[Si43]{26} C. L. Siegel, \textit{Symplectic geometry}, Amer. J. Math., 65:1-86, 1943.
		\bibitem[St68]{56} J.R. Stallings, \textit{On torsion-free groups with infinitely many ends} Ann. Math. (1968), 312-334.
		\bibitem[Wh10]{58} K.~Whyte, \textit{Coarse bundles}, arXiv:1006.3347.
		\bibitem[Wi96]{60} D.~Wise, \textit{Non-positively curved squared complexes aperiodic tilings and non-residually
		finite groups}, Princeton University, 1996.
		
	\end{thebibliography}
\end{document}